\documentclass[12pt,reqno]{amsart}
\usepackage[T1]{fontenc}
\usepackage[utf8]{inputenc}
\usepackage{lmodern}
\usepackage[a4paper,margin=1in]{geometry}
\usepackage{setspace}
\usepackage{amsmath,amssymb,amsthm,mathtools,amsfonts,bm}
\usepackage{esint}          
\numberwithin{equation}{section}
\usepackage{graphicx}
\usepackage{subcaption}
\usepackage{booktabs}
\usepackage{multirow}
\usepackage{tabularx}
\usepackage{colortbl}
\usepackage{float}
\usepackage{comment}
\usepackage{tikz}
\usetikzlibrary{arrows.meta,calc,positioning}
\usepackage{pgfplots}
\pgfplotsset{compat=1.18}
\usepackage{enumitem}
\usepackage{lscape}
\usepackage{hyperref}
\hypersetup{
  colorlinks=true,
  linkcolor=blue!50!black,
  citecolor=blue!50!black,
  urlcolor=blue!50!black,
  pdfauthor={Behrooz Moosavi Ramezanzadeh},
  pdftitle={A Projected Tug-of-War Game for the Regularized p-Laplacian}
}
\usepackage[nameinlink,capitalise,noabbrev]{cleveref}
\theoremstyle{plain}
\newtheorem{theorem}{Theorem}[section]
\newtheorem{proposition}[theorem]{Proposition}
\newtheorem{lemma}[theorem]{Lemma}
\newtheorem{corollary}[theorem]{Corollary}
\theoremstyle{definition}

\newtheorem{definition}[theorem]{Definition}

\theoremstyle{remark}
\newtheorem{remark}[theorem]{Remark}

\DeclareMathOperator{\divgg}{div}         
\DeclareMathOperator{\tr}{tr}            

\DeclareMathOperator{\dist}{dist}

\newcommand{\R}{\mathbb{R}}
\newcommand{\N}{\mathbb{N}}

\newcommand{\Geps}{\Gamma_\varepsilon}
\newcommand{\Oeps}{\Omega_\varepsilon}
\newcommand{\Be}{B_\varepsilon}
\newcommand{\Feps}{\mathcal{F}_\varepsilon}
\newcommand{\talpha}{\widetilde\alpha}
\newcommand{\tbeta}{\widetilde\beta}
\newcommand{\Sp}{S^+}
\newcommand{\Sm}{S^-}
\newcommand{\rhoeps}{\rho_\varepsilon}

\newcommand{\DeltapN}{\Delta_p^N}
\title[Projected Tug-of-War]{A Projected Tug-of-War Game for the Regularized 
$p$-Laplacian}
\author{Behrooz Moosavi Ramezanzadeh}
\thanks{University of Pittsburgh, Pittsburgh, PA, USA.
  \texttt{behroozmoosavi@pitt.edu}
  }
\date{\today}
\begin{document}
\begin{abstract}
We give a tug-of-war interpretation of the regularized $p$-Laplacian
$\divgg\big((1+|Dv|^2)^{p/2-1}Dv\big)=0$ in a bounded domain
$\Omega\subset\R^n$, $p\ge 2$.  The key is the linear lift
$w(x,x_{n+1})=v(x)+x_{n+1}$, which identifies this equation with
$\Delta_p w=0$ in $\R^{n+1}$.  Projecting the standard
$(n+1)$-dimensional $p$-harmonious scheme onto $\R^n$ yields a
discrete dynamic programming principle for which we prove existence,
uniqueness, and Borel measurability of solutions with strip boundary
data, identify the unique fixed point with the value of the projected
game, and establish convergence to the viscosity solution as
$\varepsilon\to 0$.
\end{abstract} 
\maketitle

\noindent\textbf{Acknowledgement. }I am deeply indebted to Professor Juan Manfredi for his proposed problem, guidance and valuable comments.

\section{Introduction}

The dynamic programming principle has become a central bridge between
nonlinear elliptic partial differential equations and stochastic games.
In the tug-of-war framework, the value function of a discrete
two-player zero-sum game satisfies a nonlinear dynamic programming
equation, and in the limit as the step size tends to zero this equation
converges to a continuum PDE.  This point of view has proved especially
effective for equations related to the $p$-Laplacian, where it provides
both a probabilistic interpretation and an elementary route to
existence, uniqueness, approximation, and qualitative properties of
solutions. \cite{PS08,MPR12,LPS14,CIL92}

For the normalized $p$-Laplacian, tug-of-war with noise leads to the
standard $p$-harmonious dynamic programming principle, and the
resulting theory is by now well understood. \cite{PS08,MPR12}
In particular, the strip-based formulation yields a natural framework
for fixed-point arguments, comparison principles, measurability, and
convergence to the corresponding viscosity solution. \cite{MPR12,LPS14}
Recent work has also emphasized the role of viscosity methods and
related superposition ideas in disjoint variables for nonlinear
equations, further clarifying the structural flexibility of this point
of view. \cite{CIL92,AMP25,LiuManfrediZhou25}

The present paper is concerned with the regularized $p$-Laplacian,
which for $p\ge 2$ may be written in divergence form as
\[
  \divgg\bigl((1+|Dv|^2)^{p/2-1}Dv\bigr)=0.
\]
Unlike the normalized $p$-Laplacian, this operator is uniformly
elliptic and does not degenerate at points where the gradient
vanishes.  It also arises naturally as the Euler--Lagrange equation of
an area-type functional.  The main problem studied here is whether this
regularized equation admits a natural tug-of-war interpretation, and
whether the corresponding dynamic programming principle can be treated
with the same precision as in the standard $p$-harmonious theory.

The key observation is that the regularized equation can be embedded
into the ordinary $p$-Laplacian in one higher dimension by the linear
lifting
\[
  w(x,x_{n+1})=v(x)+x_{n+1}.
\]
This is reminiscent of superposition constructions in disjoint
variables, although in the present setting the crucial fact is the
explicit identity obtained from the lifted gradient and Hessian, rather
than an abstract viscosity theorem on sums.  Projecting the standard
$(n+1)$-dimensional tug-of-war with noise back to $\R^n$ yields a new
dynamic programming principle: the averaging term becomes a weighted
average with a semicircular kernel, and the strategic sup/inf terms
acquire a geometric tilt determined by the lifted coordinate.

The purpose of the paper is to develop the basic theory of this
projected dynamic programming principle.  More precisely, we derive the
projected scheme from the higher-dimensional game, identify the
semicircular kernel and the tilted strategic terms, establish
existence, uniqueness, and measurability of solutions with strip
boundary data, and show that the unique solution coincides with the
value of the projected tug-of-war game.  The continuum limit is then
related back to the regularized $p$-Laplacian through the lifting and
the known convergence theory for $p$-harmonious functions in one higher
dimension.

The main contributions of the paper are as follows.

\begin{enumerate}[label=\textup{(\roman*)},leftmargin=3em]
  \item We identify the regularized $p$-Laplacian with the ordinary
  $p$-Laplacian of a lifted function in one higher dimension.

  \item We compute the projected noise law and obtain the semicircular
  kernel induced by the projection of the uniform measure on the
  $(n+1)$-dimensional ball.

  \item We derive the projected dynamic programming principle and
  identify its tilted strategic terms.

  \item We establish existence, uniqueness, and Borel measurability for
  the projected dynamic programming principle with strip boundary data.

  \item We prove that the unique fixed point of the projected operator
  is the value of the corresponding projected tug-of-war game.

  \item We formulate the convergence result linking the projected
  dynamic programming principle to the regularized $p$-Laplacian via
  the lifting construction.
\end{enumerate}

The paper is organized as follows.  Section~\ref{sec:lifting} derives
the lifting, the projected kernel, and the projected dynamic
programming principle.  Section~\ref{sec:operator} introduces the
function space and the projected dynamic programming operator.
Section~\ref{sec:existence} proves existence and uniqueness.
Section~\ref{sec:game} formulates the projected tug-of-war game and
identifies its value.  Section~\ref{sec:convergence} explains the
passage to the continuum limit through the higher-dimensional
$p$-harmonious theory.

Throughout the paper, $\Omega\subset\R^n$ is a bounded domain,
$\varepsilon>0$ denotes the step size, and $p\ge 2$ is fixed.
\section{The lifting and the projected DPP}\label{sec:lifting}

In this section we derive the discrete scheme that will be studied in
the remainder of the paper.  The central observation is that the
regularized $p$-Laplacian in $\R^n$ may be realized as the ordinary
$p$-Laplacian in one higher dimension through a linear lifting.
Projecting the standard $(n+1)$-dimensional tug-of-war with noise back
to $\R^n$ then produces the projected dynamic programming principle
introduced in the introduction.  The projection has two geometric
effects: the uniform noise in the higher-dimensional ball becomes a
weighted noise in $\R^n$, and the strategic sup/inf terms acquire a
tilt coming from the last coordinate.

\subsection{Heuristic identification of the game constants}
\label{subsec:constants}

We begin with the standard formal expansion that determines the
coefficients in the tug-of-war with noise dynamic programming
principle.  The computation is heuristic and is included only to
motivate the constants that will later arise from the lifting.

Let $u\in C^4(\Omega)$ and $x\in\Omega$.  Averaging the Taylor
expansion of $u$ over $B_\varepsilon(x)$ gives
\begin{equation}\label{eq:avg-expand}
  \fint_{B_\varepsilon(x)}u(y)\,dy
  =
  u(x)+\frac{\varepsilon^2}{2(n+2)}\Delta u(x)+O(\varepsilon^4),
\end{equation}
since the first moments vanish and
\[
  \fint_{B_\varepsilon(x)}(y_i-x_i)(y_j-x_j)\,dy
  =
  \frac{\varepsilon^2}{n+2}\,\delta_{ij}.
\]

If $u\in C^2(\Omega)$ and $Du(x)\neq 0$, then the maximizing and
minimizing directions over $B_\varepsilon(x)$ are asymptotically given
by $\pm Du(x)/|Du(x)|$.  Consequently,
\begin{equation}\label{eq:strat-expand}
  \frac12\Bigl(\sup_{B_\varepsilon(x)}u+\inf_{B_\varepsilon(x)}u\Bigr)
  =
  u(x)+\frac{\varepsilon^2}{2}\Delta_\infty^N u(x)+O(\varepsilon^3),
\end{equation}
where
\[
  \Delta_\infty^N u
  :=
  \frac{\langle D^2u\,Du,Du\rangle}{|Du|^2}
\]
is the normalized infinity Laplacian.

Suppose now that a dynamic programming principle in $\R^m$ has the form
\[
  u_\varepsilon(x)
  =
  \frac{\alpha}{2}\Bigl(\sup_{B_\varepsilon(x)}u_\varepsilon
        +\inf_{B_\varepsilon(x)}u_\varepsilon\Bigr)
  +\beta \fint_{B_\varepsilon(x)}u_\varepsilon(y)\,dy,
  \qquad \alpha+\beta=1.
\]
Substituting \eqref{eq:avg-expand} and \eqref{eq:strat-expand} and
subtracting $u(x)$ yields the formal consistency relation
\[
  0
  =
  \frac{\alpha\varepsilon^2}{2}\Delta_\infty^N u(x)
  +
  \frac{\beta\varepsilon^2}{2(m+2)}\Delta u(x)
  +O(\varepsilon^3).
\]
Thus, in order to match the normalized $p$-Laplace operator
\[
  \Delta u+(p-2)\Delta_\infty^N u,
\]
one must impose
\[
  (m+2)\alpha=(p-2)\beta,
  \qquad
  \alpha+\beta=1.
\]
Solving gives
\begin{equation}\label{eq:abm}
  \alpha=\frac{p-2}{p+m},
  \qquad
  \beta=\frac{m+2}{p+m}.
\end{equation}

In the present paper the relevant game lives, after lifting, in
dimension $m=n+1$.  Therefore the coefficients that appear in the
projected dynamic programming principle are
\begin{equation}\label{eq:constants}
  \talpha=\frac{p-2}{p+n+1},
  \qquad
  \tbeta=\frac{n+3}{p+n+1},
  \qquad
  \talpha+\tbeta=1.
\end{equation}
These are exactly the standard $p$-harmonious coefficients in
dimension $n+1$. \cite{MPR12,LPS14}

\begin{remark}
The preceding computation is only a consistency check for the choice of
coefficients.  In the regularized problem the true mechanism is not the
formal comparison with the normalized $p$-Laplacian in $\R^n$, but the
lifting to the ordinary $p$-Laplacian in $\R^{n+1}$ established below.
\end{remark}

\subsection{The lifting equivalence}
\label{subsec:lift}

We now prove the basic structural identity that underlies the whole
paper.

\begin{proposition}[Lifting equivalence]
\label{prop:lift}
Let $\Omega\subset\R^n$ be open and let $p\ge 2$.  For
$v\in C^2(\Omega)$ define
\[
  w:\Omega\times\R\to\R,
  \qquad
  w(x,x_{n+1})=v(x)+x_{n+1}.
\]
Then
\[
  \divgg\!\bigl((1+|Dv|^2)^{p/2-1}Dv\bigr)=0
  \quad\text{in }\Omega
\]
if and only if
\[
  \Delta_p w=\divgg\!\bigl(|Dw|^{p-2}Dw\bigr)=0
  \quad\text{in }\Omega\times\R.
\]
\end{proposition}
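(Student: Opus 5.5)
The plan is a direct computation of $\Delta_p w$ in terms of $v$, showing that it coincides pointwise with the regularized operator. Since $v\in C^2(\Omega)$ and $w$ is affine in $x_{n+1}$, the lifted function $w$ is $C^2$ on $\Omega\times\R$. Its gradient is
\[
  Dw(x,x_{n+1})=(Dv(x),1),\qquad |Dw|^2=1+|Dv(x)|^2\ge 1 .
\]
In particular $Dw$ never vanishes. Hence $|Dw|^{p-2}Dw$ is $C^1$ for every $p\ge 2$, there is no degeneracy, and the divergence can be taken classically.

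The key step is to write the flux field explicitly:
\[
  |Dw|^{p-2}Dw=\bigl((1+|Dv|^2)^{p/2-1}Dv,\ (1+|Dv|^2)^{p/2-1}\bigr).
\]
Neither component depends on $x_{n+1}$. Therefore the $\partial_{x_{n+1}}$ term in the divergence in $\R^{n+1}$ vanishes identically, and for every $(x,x_{n+1})\in\Omega\times\R$ we obtain
\[
  \Delta_p w(x,x_{n+1})=\divgg_x\bigl((1+|Dv(x)|^2)^{p/2-1}Dv(x)\bigr).
\]
Both directions of the equivalence follow at once. If the right side vanishes on $\Omega$, then $\Delta_p w=0$ on $\Omega\times\R$. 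Conversely, if $\Delta_p w=0$ on $\Omega\times\R$, then evaluating at any slice, say $x_{n+1}=0$, gives the regularized equation on $\Omega$.

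As a cross-check, and for later use with the nondivergence/viscosity form, I would also expand both sides:
\[
  \Delta_p w=|Dw|^{p-2}\bigl(\Delta w+(p-2)|Dw|^{-2}\langle D^2w\,Dw,Dw\rangle\bigr).
\]
Since $D^2w=\begin{pmatrix}D^2v&0\\0&0\end{pmatrix}$, this becomes
\[
  (1+|Dv|^2)^{p/2-1}\Bigl(\Delta v+(p-2)\frac{\langle D^2v\,Dv,Dv\rangle}{1+|Dv|^2}\Bigr),
\]
which matches the product-rule expansion of the regularized operator.

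There is no real obstacle here. The only points needing care are the justification that the divergence is classical when $1<p/2$ is non-integer, which holds because $1+|Dv|^2\ge1$ keeps $t\mapsto t^{p/2-1}$ smooth on the relevant range, and the bookkeeping showing that the last flux component is independent of $x_{n+1}$.
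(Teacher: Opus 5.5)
Your proposal is correct and matches the paper's proof: you compute $Dw=(Dv,1)$ and note that the flux $|Dw|^{p-2}Dw$ does not depend on $x_{n+1}$. The last divergence term therefore vanishes, giving the pointwise identity $\Delta_p w(x,x_{n+1})=\divgg\bigl((1+|Dv|^2)^{p/2-1}Dv\bigr)(x)$, from which both directions follow. Your remarks on the $C^1$ regularity of the flux (from $|Dw|\ge 1$) and the nondivergence cross-check are correct, and the paper covers them only briefly by noting $Dw\neq 0$ and recording $D^2w$.
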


\begin{proof}
Write points in $\R^{n+1}$ as $(x',x_{n+1})$, with $x'\in\R^n$.
Since
\[
  w(x',x_{n+1})=v(x')+x_{n+1},
\]
we have
\[
  Dw=(Dv,1),
  \qquad
  |Dw|^2=|Dv|^2+1,
\]
and
\[
  D^2w
  =
  \begin{pmatrix}
    D^2v & 0\\
    0 & 0
  \end{pmatrix}.
\]
In particular, $Dw\neq 0$ everywhere because the last component equals
$1$.

Now
\[
  \Delta_p w
  =
  \sum_{k=1}^{n+1}\partial_{x_k}\!\bigl(|Dw|^{p-2}\partial_{x_k}w\bigr).
\]
Since $|Dw|^2=1+|Dv(x')|^2$ is independent of $x_{n+1}$ and
$\partial_{x_{n+1}}w=1$, the last term vanishes:
\[
  \partial_{x_{n+1}}\!\bigl(|Dw|^{p-2}\partial_{x_{n+1}}w\bigr)
  =
  \partial_{x_{n+1}}\!\bigl((1+|Dv|^2)^{p/2-1}\bigr)
  =0.
\]
Therefore
\begin{align*}
  \Delta_p w
  &=
  \sum_{i=1}^n
  \partial_{x_i}\!\bigl((1+|Dv|^2)^{p/2-1}\partial_{x_i}v\bigr)\\
  &=
  \divgg\!\bigl((1+|Dv|^2)^{p/2-1}Dv\bigr).
\end{align*}
Thus $\Delta_p w=0$ in $\Omega\times\R$ if and only if
\[
  \divgg\!\bigl((1+|Dv|^2)^{p/2-1}Dv\bigr)=0
\]
in $\Omega$.
\end{proof}

\begin{remark}
The regularization in the operator is encoded geometrically by the
extra linear coordinate.  Indeed, the identity $|Dw|^2=1+|Dv|^2$
explains precisely why the standard $p$-Laplacian of $w$ becomes the
regularized $p$-Laplacian of $v$.
\end{remark}

\subsection{The projected kernel}
\label{subsec:kernel}

The noise term in the lifted game is the uniform average over the
$(n+1)$-dimensional ball.  Projecting this average onto $\R^n$ produces
a weighted kernel, which we now compute.

\begin{lemma}[Semicircular kernel]
\label{lem:density}
Define
\begin{equation}\label{eq:rho}
  \rhoeps(h)
  :=
  \frac{2\sqrt{\varepsilon^2-|h|^2}}{|B_\varepsilon^{n+1}|},
  \qquad h\in B_\varepsilon(0)\subset\R^n.
\end{equation}
Then $\rhoeps$ is a strictly positive probability density on
$B_\varepsilon(0)$.
\end{lemma}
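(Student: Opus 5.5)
The plan is to check the two claims separately: positivity, which is immediate, and total mass one, which reduces to Fubini's theorem on the $(n+1)$-dimensional ball. For positivity, note that for $h\in B_\varepsilon(0)$ we have $|h|<\varepsilon$. Hence $\varepsilon^2-|h|^2>0$, and so $\rhoeps(h)>0$. Moreover, $\rhoeps$ is continuous on $B_\varepsilon(0)$ and bounded by $2\varepsilon/|B_\varepsilon^{n+1}|$, so it is integrable.

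For the normalization, I will write points of $B_\varepsilon^{n+1}(0)$ as $(h,t)$ with $h\in\R^n$ and $t\in\R$. The slice of the ball over a fixed $h$ is nonempty exactly when $|h|<\varepsilon$, apart from the null set $|h|=\varepsilon$. For such $h$, the slice is the interval $|t|<\sqrt{\varepsilon^2-|h|^2}$, whose length is $2\sqrt{\varepsilon^2-|h|^2}$. Fubini--Tonelli, applied to the indicator of the ball, then gives
\[
  |B_\varepsilon^{n+1}|
  =\int_{B_\varepsilon(0)}\int_{-\sqrt{\varepsilon^2-|h|^2}}^{\sqrt{\varepsilon^2-|h|^2}}dt\,dh
  =\int_{B_\varepsilon(0)}2\sqrt{\varepsilon^2-|h|^2}\,dh .
\]
Dividing by $|B_\varepsilon^{n+1}|$ yields $\int_{B_\varepsilon(0)}\rhoeps(h)\,dh=1$.

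I will also remark on the interpretation, since it is what the later sections use. The computation shows that $\rhoeps$ is the density of the pushforward of the uniform probability measure on $B_\varepsilon^{n+1}(0)$ under the projection $(h,t)\mapsto h$. Equivalently, for bounded Borel $f$ on $\R^n$,
\[
  \fint_{B_\varepsilon^{n+1}}f(h)\,d(h,t)=\int_{B_\varepsilon(0)}f(h)\rhoeps(h)\,dh .
\]

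There is no real obstacle here. The only point needing care is the slicing step: the measurability of the ball's indicator justifies Tonelli, and the sphere $|h|=\varepsilon$ has measure zero, so it does not affect the integral.
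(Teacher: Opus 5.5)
Your proof is correct and follows the paper's argument: positivity comes from $|h|<\varepsilon$, and the normalization comes from integrating the fiber length $2\sqrt{\varepsilon^2-|h|^2}$ over $B_\varepsilon(0)$ to recover $|B_\varepsilon^{n+1}|$ by Fubini. Your pushforward interpretation is the same as the remark the paper records immediately after the lemma.
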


\begin{proof}
If $|h|<\varepsilon$, then $\varepsilon^2-|h|^2>0$, so
$\rhoeps(h)>0$.

It remains to check the normalization.  We claim that
\[
  \int_{B_\varepsilon(0)}2\sqrt{\varepsilon^2-|h|^2}\,dh
  =
  |B_\varepsilon^{n+1}|.
\]
Indeed, for each fixed $h\in\R^n$ with $|h|<\varepsilon$, the fiber of
the $(n+1)$-dimensional ball above $h$ is the interval
\[
  \Bigl\{t\in\R: |h|^2+t^2<\varepsilon^2\Bigr\}
  =
  \Bigl(-\sqrt{\varepsilon^2-|h|^2},\sqrt{\varepsilon^2-|h|^2}\Bigr),
\]
whose length is exactly $2\sqrt{\varepsilon^2-|h|^2}$.  Integrating the
fiber length over $B_\varepsilon(0)\subset\R^n$ gives the measure of the
whole ball $B_\varepsilon^{n+1}(0)\subset\R^{n+1}$:
\[
  |B_\varepsilon^{n+1}|
  =
  \int_{B_\varepsilon(0)}2\sqrt{\varepsilon^2-|h|^2}\,dh.
\]
Therefore
\[
  \int_{B_\varepsilon(0)}\rhoeps(h)\,dh
  =
  \frac{1}{|B_\varepsilon^{n+1}|}
  \int_{B_\varepsilon(0)}2\sqrt{\varepsilon^2-|h|^2}\,dh
  =
  1.
\]
\end{proof}

\begin{remark}
The kernel $\rhoeps$ is radial and strictly positive on the whole open
ball $B_\varepsilon(0)$.  It is the pushforward of the normalized
Lebesgue measure on $B_\varepsilon^{n+1}(0)$ under the projection
$(h,t)\mapsto h$.
\end{remark}

\subsection{Derivation of the projected dynamic programming principle}
\label{subsec:project}

We now derive the projected dynamic programming principle from the
standard tug-of-war with noise in one higher dimension.

Let
\[
  \Geps
  :=
  \{x\in\R^n\setminus\Omega:\dist(x,\Omega)\le\varepsilon\},
  \qquad
  \Oeps:=\Omega\cup\Geps.
\]
Assume that $v_\varepsilon:\Oeps\to\R$ is extended to the strip by the
prescribed boundary data.  Define the lifted function
\[
  w_\varepsilon:\Oeps\times\R\to\R,
  \qquad
  w_\varepsilon(x,s):=v_\varepsilon(x)+s.
\]
Suppose that $w_\varepsilon$ satisfies the standard
$(n+1)$-dimensional DPP
\begin{equation}\label{eq:DPP-n+1}
  w_\varepsilon(x,s)
  =
  \frac{\talpha}{2}
  \left(
    \sup_{B_\varepsilon^{n+1}(x,s)}w_\varepsilon
    +
    \inf_{B_\varepsilon^{n+1}(x,s)}w_\varepsilon
  \right)
  +
  \tbeta\fint_{B_\varepsilon^{n+1}(x,s)}w_\varepsilon(\xi)\,d\xi.
\end{equation}
We show that $v_\varepsilon$ then satisfies a projected
$n$-dimensional dynamic programming principle.

For $u:\Oeps\to\R$ and $x\in\Omega$, define the tilted functionals
\begin{align}
  \Sp[u](x)
  &:=
  \sup_{\substack{\tilde x\in\Oeps\\|\tilde x-x|\le\varepsilon}}
  \Bigl(u(\tilde x)+\sqrt{\varepsilon^2-|\tilde x-x|^2}\Bigr),
  \label{eq:Splus}\\
  \Sm[u](x)
  &:=
  \inf_{\substack{\tilde x\in\Oeps\\|\tilde x-x|\le\varepsilon}}
  \Bigl(u(\tilde x)-\sqrt{\varepsilon^2-|\tilde x-x|^2}\Bigr).
  \label{eq:Sminus}
\end{align}

\begin{proposition}[Projected dynamic programming principle]
\label{prop:dpp-intro}
If $w_\varepsilon(x,s)=v_\varepsilon(x)+s$ satisfies
\eqref{eq:DPP-n+1}, then $v_\varepsilon$ satisfies
\begin{equation}\label{eq:DPP-v}
  v_\varepsilon(x)
  =
  \frac{\talpha}{2}\bigl(\Sp[v_\varepsilon](x)+\Sm[v_\varepsilon](x)\bigr)
  +
  \tbeta\fint_{B_\varepsilon(0)}v_\varepsilon(x+h)\rhoeps(h)\,dh,
  \qquad x\in\Omega.
\end{equation}
\end{proposition}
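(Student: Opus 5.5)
The plan is to evaluate each of the three terms on the right of \eqref{eq:DPP-n+1} when $w_\varepsilon(x,s)=v_\varepsilon(x)+s$, and then subtract $s$ from both sides. Every term will turn out to equal an $n$-dimensional expression plus exactly $s$. Since $\talpha+\tbeta=1$ by \eqref{eq:constants}, the $s$ contributions total $s$ and cancel against the left side $v_\varepsilon(x)+s$, which leaves \eqref{eq:DPP-v}. Here $x\in\Omega$ is fixed, and the lifted ball $B_\varepsilon^{n+1}(x,s)$ is read as its intersection with $\Oeps\times\R$, where $w_\varepsilon$ is defined. This is harmless because for $x\in\Omega$ the projection $B_\varepsilon(x)$ already lies in $\Oeps$.

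\textbf{Step 1 (the supremum).} I slice the ball over its projection. A point of $B_\varepsilon^{n+1}(x,s)$ has the form $(\tilde x,t)$ with $|\tilde x-x|\le\varepsilon$ and $|t-s|\le\sqrt{\varepsilon^2-|\tilde x-x|^2}$; whether the ball is open or closed is only a boundary issue, and I will treat it by closure. For fixed $\tilde x$, the map $t\mapsto v_\varepsilon(\tilde x)+t$ is increasing, so the supremum over the fiber is $v_\varepsilon(\tilde x)+s+\sqrt{\varepsilon^2-|\tilde x-x|^2}$. Taking the supremum over $\tilde x$ then gives $\sup_{B^{n+1}_\varepsilon(x,s)} w_\varepsilon=\Sp[v_\varepsilon](x)+s$. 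In the same way, the infimum over each fiber is attained at the bottom endpoint, which gives $\inf w_\varepsilon=\Sm[v_\varepsilon](x)-\sqrt{\cdot}$ in each fiber, that is, $\inf_{B^{n+1}_\varepsilon(x,s)} w_\varepsilon=\Sm[v_\varepsilon](x)+s$. So the strategic term equals $\frac{\talpha}{2}(\Sp+\Sm)[v_\varepsilon](x)+\talpha s$.

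\textbf{Step 2 (the average).} I apply Fubini to the cylinder-type decomposition, which is the same fibration used in Lemma~\ref{lem:density}. Writing $\xi=(x+h,s+\tau)$, I get
\[
\fint_{B^{n+1}_\varepsilon(x,s)}w_\varepsilon
=\frac{1}{|B^{n+1}_\varepsilon|}\int_{B_\varepsilon(0)}\int_{-\sqrt{\varepsilon^2-|h|^2}}^{\sqrt{\varepsilon^2-|h|^2}}\bigl(v_\varepsilon(x+h)+s+\tau\bigr)\,d\tau\,dh .
\]
The $\tau$ term integrates to zero by oddness. The remaining inner integral produces the fiber length $2\sqrt{\varepsilon^2-|h|^2}$, and after dividing by $|B^{n+1}_\varepsilon|$ this gives $\int v_\varepsilon(x+h)\rhoeps(h)\,dh+s$, because $\rhoeps$ is a probability density by Lemma~\ref{lem:density}. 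Here I am writing $\fint\cdots\rhoeps\,dh$ as in \eqref{eq:DPP-v}, meaning integration against this density.

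\textbf{Main obstacle.} The algebra is routine. The one point that needs care is the sup/inf step. One has to exchange the supremum over the ball for an iterated supremum, first over fibers and then over base points, and check that it is attained on the fiber boundary. If the ball is open, the fiber supremum is not attained, so the identity holds with the closed constraint $|\tilde x-x|\le\varepsilon$ used in \eqref{eq:Splus} only after taking the closure. I would handle this by noting that the iterated supremum over the open set equals $\sup_{|\tilde x-x|<\varepsilon}$ of the tilted quantity. I would then either interpret the balls in \eqref{eq:DPP-n+1} as closed, consistent with the definition of $\Sp$ and $\Sm$, or observe that this convention is the one implicitly fixed by \eqref{eq:Splus}. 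Fubini also requires $v_\varepsilon$ to be measurable and bounded, which I take as part of the standing assumption that \eqref{eq:DPP-n+1} makes sense.
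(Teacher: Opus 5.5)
Your proposal is correct and follows the paper's proof essentially step for step: optimizing fiberwise in the lifted coordinate gives $s+\Sp[v_\varepsilon](x)$ and $s+\Sm[v_\varepsilon](x)$, Fubini with oddness in the last variable produces the kernel $\rhoeps$, and $\talpha+\tbeta=1$ cancels the $s$ terms. Your open-versus-closed caveat is a genuine point the paper passes over, since it describes the lifted ball with a strict inequality but defines $\Sp,\Sm$ with $|\tilde x-x|\le\varepsilon$. Reading the lifted balls as closed, as you propose, makes the sup/inf identities exact for arbitrary bounded Borel $v_\varepsilon$ and does not affect the averaging term.
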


\begin{proof}
Fix $(x,s)\in\Omega\times\R$.

For the strategic terms, observe that
$(\tilde x,\tilde s)\in B_\varepsilon^{n+1}(x,s)$ if and only if
\[
  |\tilde x-x|^2+|\tilde s-s|^2<\varepsilon^2.
\]
Thus, for each admissible $\tilde x$, the variable $\tilde s$ ranges over
\[
  s-\sqrt{\varepsilon^2-|\tilde x-x|^2}
  <
  \tilde s
  <
  s+\sqrt{\varepsilon^2-|\tilde x-x|^2}.
\]
Since $w_\varepsilon(\tilde x,\tilde s)=v_\varepsilon(\tilde x)+\tilde s$,
maximization in $\tilde s$ gives
\[
  \sup_{B_\varepsilon^{n+1}(x,s)}w_\varepsilon
  =
  s+\Sp[v_\varepsilon](x),
\]
and minimization gives
\[
  \inf_{B_\varepsilon^{n+1}(x,s)}w_\varepsilon
  =
  s+\Sm[v_\varepsilon](x).
\]

For the averaging term, write points in $\R^{n+1}$ as $(x+h,s+t)$,
with $(h,t)\in B_\varepsilon^{n+1}(0)$.  Then
\[
  w_\varepsilon(x+h,s+t)=v_\varepsilon(x+h)+s+t.
\]
Therefore
\begin{align*}
  \fint_{B_\varepsilon^{n+1}(x,s)}w_\varepsilon(\xi)\,d\xi
  &=
  \frac1{|B_\varepsilon^{n+1}|}
  \int_{B_\varepsilon^{n+1}(0)}
  \bigl(v_\varepsilon(x+h)+s+t\bigr)\,dh\,dt\\
  &=
  s+
  \frac1{|B_\varepsilon^{n+1}|}
  \int_{B_\varepsilon(0)}
  \left(
    \int_{-\sqrt{\varepsilon^2-|h|^2}}^{\sqrt{\varepsilon^2-|h|^2}}
    \bigl(v_\varepsilon(x+h)+t\bigr)\,dt
  \right)dh.
\end{align*}
The odd part in $t$ vanishes, and the remaining integral becomes
\begin{align*}
  \fint_{B_\varepsilon^{n+1}(x,s)}w_\varepsilon(\xi)\,d\xi
  &=
  s+
  \frac1{|B_\varepsilon^{n+1}|}
  \int_{B_\varepsilon(0)}
  2\sqrt{\varepsilon^2-|h|^2}\,v_\varepsilon(x+h)\,dh\\
  &=
  s+\int_{B_\varepsilon(0)}v_\varepsilon(x+h)\rhoeps(h)\,dh.
\end{align*}

Substituting these identities into \eqref{eq:DPP-n+1} gives
\begin{align*}
  v_\varepsilon(x)+s
  &=
  \frac{\talpha}{2}
  \Bigl(s+\Sp[v_\varepsilon](x)+s+\Sm[v_\varepsilon](x)\Bigr)\\
  &\qquad
  +\tbeta
  \left(
    s+\int_{B_\varepsilon(0)}v_\varepsilon(x+h)\rhoeps(h)\,dh
  \right).
\end{align*}
Since $\talpha+\tbeta=1$, the terms involving $s$ cancel, and we obtain
\eqref{eq:DPP-v}.
\end{proof}

\begin{remark}
The projected DPP differs from the standard $p$-harmonious equation in
two essential ways.  The noise is no longer uniform, but governed by the
semicircular kernel $\rhoeps$, and the strategic terms are no longer plain
sup/inf, but the tilted operators \eqref{eq:Splus}--\eqref{eq:Sminus}.
These are precisely the two traces left in $\R^n$ by the higher-dimensional
geometry of the lifted game.
\end{remark}
\section{The function space and the projected DPP operator}
\label{sec:operator}

In this section we formulate the projected dynamic programming
principle on the natural strip domain and establish the basic analytic
properties of the associated operator in the sprit of \cite{MPR12}.  The main point is that the
projected scheme is well defined on the strip extension $\Oeps$, and
that the tilted strategic terms preserve Borel measurability.

\subsection{The strip domain and the function space}

Let $\Omega\subset\R^n$ be a bounded domain and let $\varepsilon>0$.
As in the standard tug-of-war with noise theory, boundary data must be
prescribed on the $\varepsilon$-strip
\begin{equation}\label{eq:strip-domain}
  \Geps
  :=
  \{x\in\R^n\setminus\Omega:\dist(x,\Omega)\le \varepsilon\},
  \qquad
  \Oeps:=\Omega\cup\Geps.
\end{equation}
Indeed, if $x\in\Omega$ and $|h|<\varepsilon$, then $x+h$ need not lie
in $\Omega$, but it always belongs to $\Oeps$.  Likewise, any admissible
strategic move $\tilde x$ with $|\tilde x-x|\le \varepsilon$ lies in
$\Oeps$ once the process is formulated on the strip.

Let $F:\Geps\to\R$ be a bounded Borel function.  We define
\begin{equation}\label{eq:Feps}
  \Feps
  :=
  \bigl\{
    u:\Oeps\to\R:
    u \text{ is bounded and Borel, and } u|_{\Geps}=F
  \bigr\}.
\end{equation}
We equip $\Feps$ with the supremum norm
\[
  \|u\|_\infty:=\sup_{x\in\Oeps}|u(x)|.
\]

\begin{remark}
The use of strip boundary data is not merely technical.  Since the
projected DPP contains both the values $u(x+h)$ for $|h|<\varepsilon$
and the tilted strategic terms evaluated at points
$\tilde x$ with $|\tilde x-x|\le \varepsilon$, the natural state space
of the scheme is $\Oeps$, not just $\overline\Omega$.
\end{remark}

\subsection{The tilted strategic functionals}

The projection of the lifted game replaces the usual supremum and
infimum over $B_\varepsilon(x)$ by tilted versions.

\begin{definition}[Tilted sup/inf]
\label{def:tilted}
For a bounded function $u:\Oeps\to\R$ and $x\in\Omega$, define
\begin{align}
  \Sp[u](x)
  &:=
  \sup_{\substack{\tilde x\in\Oeps\\ |\tilde x-x|\le \varepsilon}}
  \Bigl(u(\tilde x)+\sqrt{\varepsilon^2-|\tilde x-x|^2}\Bigr),
  \label{eq:Splus-def}\\
  \Sm[u](x)
  &:=
  \inf_{\substack{\tilde x\in\Oeps\\ |\tilde x-x|\le \varepsilon}}
  \Bigl(u(\tilde x)-\sqrt{\varepsilon^2-|\tilde x-x|^2}\Bigr).
  \label{eq:Sminus-def}
\end{align}
\end{definition}

The first issue is measurability.

\begin{lemma}[Borel measurability of the tilted functionals]
\label{lem:tilt-meas}
If $u:\Oeps\to\R$ is bounded and Borel, then $\Sp[u]$ and $\Sm[u]$ are
Borel measurable on $\Omega$.
\end{lemma}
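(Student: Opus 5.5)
The plan is to work with superlevel sets: it suffices to show that $\{x\in\Omega:\Sp[u](x)>t\}$ is Borel for every $t\in\R$. The case of $\Sm[u]$ then follows from the identity $\Sm[u]=-\Sp[-u]$, which holds because $-u$ is again bounded and Borel. Fix $t$, and for rational $q$ set $E_q:=\{y\in\Oeps:u(y)>q\}$; each $E_q$ is Borel. Write $r:=|\tilde x-x|$ and $\sigma(r):=\sqrt{\varepsilon^2-r^2}$. Then $\Sp[u](x)>t$ holds exactly when some admissible $\tilde x$ satisfies $u(\tilde x)+\sigma(|\tilde x-x|)>t$. The idea is to separate the two summands by a rational number: find $q\in\mathbb Q$ with $u(\tilde x)>q$ and $\sigma(|\tilde x-x|)>t-q$.

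\textbf{Step 1 (interior contributions give open sets).} Take $q\in\mathbb Q$ with $|t-q|<\varepsilon$, or with $t-q<0$. The condition $\sigma(|\tilde x-x|)>t-q$ says that $|\tilde x-x|<R_q$, where $R_q=\sqrt{\varepsilon^2-(t-q)^2}$ if $0\le t-q<\varepsilon$, and $R_q=\varepsilon$ if $t-q<0$; in the second case we also require $|\tilde x-x|<\varepsilon$. The corresponding set of $x$ is
\[
  \bigl(E_q+B_{R_q}(0)\bigr)\cap\Omega,
  \qquad
  E_q+B_{R_q}(0)=\bigcup_{y\in E_q}B_{R_q}(y),
\]
which is a union of open balls and hence open. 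Every witness $\tilde x$ with $|\tilde x-x|<\varepsilon$ is captured this way: since $\sigma>0$ there, a rational $q$ fits between $t-\sigma(|\tilde x-x|)$ and $u(\tilde x)$. This yields the decomposition
\[
  \{\Sp[u]>t\}=\Bigl(\bigcup_{q\in\mathbb Q}\bigl(E_q+B_{R_q}(0)\bigr)\cap\Omega\Bigr)\ \cup\ \{x\in\Omega:\exists\,\tilde x\in E_t\cap\Oeps,\ |\tilde x-x|=\varepsilon\}.
\]
Only the second set still needs treatment.

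\textbf{Step 2 (the sphere term; main obstacle).} The remaining set is $(E_t+\partial B_\varepsilon(0))\cap\Omega$. It collects the witnesses on the boundary sphere, where the tilt vanishes and the inequality is no longer open. This is the step I expect to be hardest, because a Minkowski sum of a Borel set with a sphere is a priori only analytic.

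My first attempt will be to show that this set is \emph{absorbed} by the open part modulo a Borel correction. If $|\tilde x-x|=\varepsilon$, $u(\tilde x)>t$ and $x\in\Omega$, I will try to use the strip structure of $\Oeps$, namely that $\Geps$ is exactly the closed $\varepsilon$-neighbourhood shell. I will separate the points $\tilde x$ lying in $\Omega$ (open) from those lying in $\Geps$, and on the $\Geps$ part use that $F$ is the given Borel datum.

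If absorption fails, the fallback is a representation
\[
  E_t+\partial B_\varepsilon(0)=\bigcup_{k}\bigl(K_k+\partial B_\varepsilon(0)\bigr),
\]
where $K_k\subset E_t$ are compact sets. Each $K_k+\partial B_\varepsilon(0)$ is compact, being the image of a compact set under addition. This requires approximating $E_t$ from inside by a $\sigma$-compact set without changing the sum on $\Omega$, and I expect to need the specific geometry of the strip for that.

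\textbf{Step 3.} Once both pieces are Borel, $\{\Sp[u]>t\}$ is a countable union of Borel sets and hence Borel. Boundedness of $u$ guarantees $|\Sp[u]|\le\|u\|_\infty+\varepsilon$, so $\Sp[u]$ is a real-valued Borel function on $\Omega$. Applying the result to $-u$ gives the claim for $\Sm[u]$.
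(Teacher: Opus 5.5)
Your Step~1 is correct, and it is essentially the paper's whole proof. The paper splits according to whether $u(\tilde x)>\lambda$, $\lambda-\varepsilon<u(\tilde x)\le\lambda$, or $u(\tilde x)\le\lambda-\varepsilon$, and assigns to each $\tilde x$ the open ball $B_{\mu(\tilde x)}(\tilde x)$; this is your rational separation done pointwise. It then concludes that $\{\Sp[u]>\lambda\}$ is open. That conclusion silently drops the witnesses with $|\tilde x-x|=\varepsilon$. Under the constraint $|\tilde x-x|\le\varepsilon$ of Definition~\ref{def:tilted}, a point with $u(\tilde x)>\lambda$ contributes the \emph{closed} ball $\overline{B_\varepsilon(\tilde x)}\cap\Omega$. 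For instance, take $u=1$ at one point and $u=-2$ elsewhere: the superlevel set is a closed ball, not an open one. So your decomposition $\{\Sp[u]>t\}=O\cup\bigl(\Omega\cap(E_t+\partial B_\varepsilon(0))\bigr)$ is the accurate one, and you have found a real hole in the paper's argument.

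However, your Step~2 does not close this hole, and it cannot be closed: for $n\ge 3$ the statement is false as written. Here is a counterexample.
\begin{itemize}
\item Let $\varepsilon=1$, $\Omega=B_{10}(0)\subset\R^3$, and $C=\{y:\ y_1^2+y_2^2=1,\ |y_3|<1\}$.
\item Choose a Borel set $E\subset C$ whose projection $Z=\{y_3:\ y\in E\}$ is not Borel. This is possible because $C$ contains a homeomorphic copy of an open square with $y_3$ as one coordinate.
\item Put $u=1$ on $E$ and $u=-2$ on $\Oeps\setminus E$. Then $\Sp[u](x)\ge 1$ if some $\tilde x\in E$ has $|\tilde x-x|\le 1$, and $\Sp[u](x)\le -1$ otherwise.
\item For $x=(0,0,z)$ with $|z|<1$ and $y\in C$ one has $|y-x|^2=1+(y_3-z)^2$. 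So a witness can only lie on the sphere $|\tilde x-x|=\varepsilon$, and it exists if and only if $z\in Z$.
\end{itemize}
Hence $\{\Sp[u]>0\}$ meets the axis segment in a non-Borel set, so $\Sp[u]$ is not Borel. For $n>3$, pad with zero coordinates.

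The example also shows why both of your routes fail. The set $E$ lies inside $\Omega$, far from $\Geps$, so neither the strip geometry nor the Borel datum $F$ plays any role. There is also nothing to absorb the sphere term into, since your open part $O$ misses the entire axis segment. Your fallback representation $E_t+\partial B_\varepsilon(0)=\bigcup_k\bigl(K_k+\partial B_\varepsilon(0)\bigr)$ with compact $K_k$ would make the sphere term $F_\sigma$, which the example rules out. Inner regularity only yields $\sigma$-compact approximations up to null sets, and here $E$ itself is Lebesgue-null.

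What your Step~1 does prove is the lemma for the open constraint $|\tilde x-x|<\varepsilon$. This is in fact what the derivation in Proposition~\ref{prop:dpp-intro} produces: the $\tilde s$-fibre of the open ball $B^{n+1}_\varepsilon(x,s)$ above a point with $|\tilde x-x|=\varepsilon$ is empty. With that definition the sphere term is absent and $\{\Sp[u]>t\}$ is open. So $\Sp[u]$ is lower semicontinuous, and $\Sm[u]=-\Sp[-u]$ is upper semicontinuous. If one keeps the closed constraint, your decomposition only shows that the superlevel sets are analytic. Then $\Sp[u]$ is universally measurable, but not Borel in general.
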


\begin{proof}
We prove the statement for $\Sp[u]$; the argument for $\Sm[u]$ is
analogous.

Fix $\lambda\in\R$.  We show that the superlevel set
\[
  E_\lambda:=\{x\in\Omega:\Sp[u](x)>\lambda\}
\]
is open in $\Omega$.

By definition, $x\in E_\lambda$ if and only if there exists
$\tilde x\in\Oeps$ with $|\tilde x-x|\le \varepsilon$ such that
\[
  u(\tilde x)+\sqrt{\varepsilon^2-|\tilde x-x|^2}>\lambda.
\]
We analyze this condition according to the size of $u(\tilde x)$.

If $u(\tilde x)>\lambda$, then the above inequality is automatically
satisfied for every $x\in B_\varepsilon(\tilde x)\cap\Omega$, since the
square-root term is nonnegative.  Hence such a point $\tilde x$
contributes the whole ball $B_\varepsilon(\tilde x)\cap\Omega$.

If $\lambda-\varepsilon<u(\tilde x)\le \lambda$, then
$\lambda-u(\tilde x)\in[0,\varepsilon)$ and the condition is equivalent
to
\[
  \sqrt{\varepsilon^2-|\tilde x-x|^2}>\lambda-u(\tilde x),
\]
which in turn is equivalent to
\[
  |\tilde x-x|^2<\varepsilon^2-(\lambda-u(\tilde x))^2.
\]
Thus such a point $\tilde x$ contributes the open ball
\[
  B_{\mu(\tilde x)}(\tilde x)\cap\Omega,
  \qquad
  \mu(\tilde x):=
  \sqrt{\varepsilon^2-(\lambda-u(\tilde x))^2}.
\]

Finally, if $u(\tilde x)\le \lambda-\varepsilon$, then
\[
  u(\tilde x)+\sqrt{\varepsilon^2-|\tilde x-x|^2}
  \le u(\tilde x)+\varepsilon
  \le \lambda,
\]
so such $\tilde x$ contributes nothing.

Therefore
\[
  E_\lambda
  =
  \Omega\cap
  \bigcup_{\substack{\tilde x\in\Oeps\\ u(\tilde x)>\lambda-\varepsilon}}
  B_{\mu(\tilde x)}(\tilde x),
\]
where $\mu(\tilde x)=\varepsilon$ when $u(\tilde x)>\lambda$ and
\[
  \mu(\tilde x)=\sqrt{\varepsilon^2-(\lambda-u(\tilde x))^2}
\]
when $\lambda-\varepsilon<u(\tilde x)\le \lambda$.
This is a union of open sets intersected with $\Omega$, hence is open.
Thus $\Sp[u]$ is Borel measurable.

The proof for $\Sm[u]$ is the same after replacing $u$ by $-u$.
\end{proof}

\begin{remark}
The above argument is slightly different from the standard one for
\[
  x\mapsto \sup_{B_\varepsilon(x)}u,
\]
because the tilt term depends on the distance from $x$ to the strategic
point $\tilde x$.  The case distinction above is what makes the
measurability argument work in the present setting.
\end{remark}

\subsection{The projected dynamic programming operator}

We now define the operator associated with the projected dynamic
programming principle.

\begin{definition}[Projected DPP operator]
\label{def:T}
For $u\in\Feps$, define $Tu:\Oeps\to\R$ by
\begin{equation}\label{eq:T-boundary}
  Tu(x):=F(x),
  \qquad x\in\Geps,
\end{equation}
and
\begin{equation}\label{eq:T-interior}
  Tu(x)
  :=
  \frac{\talpha}{2}\bigl(\Sp[u](x)+\Sm[u](x)\bigr)
  +\tbeta\int_{B_\varepsilon(0)}u(x+h)\rhoeps(h)\,dh,
  \qquad x\in\Omega,
\end{equation}
where the coefficients $\talpha,\tbeta$ are given by
\eqref{eq:constants} and the kernel $\rhoeps$ is given by
\eqref{eq:rho}.
\end{definition}

A function $u\in\Feps$ is a solution of the projected dynamic
programming principle if and only if
\[
  Tu=u
  \qquad\text{in }\Oeps.
\]

We next collect the basic properties of $T$.

\begin{lemma}[Basic properties of the projected DPP operator]
\label{lem:T-props}
The operator $T:\Feps\to\Feps$ has the following properties.

\begin{enumerate}[label=\textup{(\roman*)},leftmargin=2.5em]
  \item \textbf{Well definedness.}  If $u\in\Feps$, then $Tu\in\Feps$.

  \item \textbf{Monotonicity.}  If $u,v\in\Feps$ and $u\le v$ in
  $\Oeps$, then $Tu\le Tv$ in $\Oeps$.

  \item \textbf{Supremum norm stability.}  If $u\in\Feps$, then
  \[
    \|Tu\|_\infty
    \le \max\{\|F\|_{L^\infty(\Geps)},\,\|u\|_\infty+\talpha\varepsilon\}.
  \]

  \item \textbf{Nonexpansiveness.}  If $u,v\in\Feps$, then
  \[
    \|Tu-Tv\|_\infty\le \|u-v\|_\infty.
  \]
\end{enumerate}
\end{lemma}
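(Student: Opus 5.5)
I will verify the four properties directly from Definition~\ref{def:T}, treating the boundary strip $\Geps$ and the interior $\Omega$ separately. On $\Geps$ everything is trivial because $Tu=Tv=F$ there, so all comparisons and differences vanish or are bounded by $\|F\|_{L^\infty(\Geps)}$. The real work is at points $x\in\Omega$, where I will analyse the three ingredients $\Sp[u]$, $\Sm[u]$ and the $\rhoeps$-average one at a time.

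For \textbf{(i)}, I first note that $Tu$ is finite: the square-root tilt lies in $[0,\varepsilon]$, so
\[
-\|u\|_\infty\le \Sp[u](x)\le \|u\|_\infty+\varepsilon,\qquad
-\|u\|_\infty-\varepsilon\le \Sm[u](x)\le \|u\|_\infty .
\]
The admissible set is nonempty because it contains $\tilde x=x$. Borel measurability of $\Sp[u]$ and $\Sm[u]$ on $\Omega$ is Lemma~\ref{lem:tilt-meas}. For the averaging term, I write it as $\int u(x+h)\rhoeps(h)\,\chi_{B_\varepsilon(0)}(h)\,dh$. Here $(x,h)\mapsto u(x+h)$ is Borel on the relevant set, since $x+h\in\Oeps$ whenever $x\in\Omega$ and $|h|<\varepsilon$. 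It is also bounded, so Fubini--Tonelli gives Borel measurability in $x$ (one may extend $u$ by $0$ outside $\Oeps$ to have a Borel function on all of $\R^{n+1}$-type product space). Gluing with $F$ on the Borel set $\Geps$ gives a bounded Borel function equal to $F$ on $\Geps$, so $Tu\in\Feps$.

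For \textbf{(ii)}, if $u\le v$, then for each admissible $\tilde x$ we have $u(\tilde x)\pm\sqrt{\varepsilon^2-|\tilde x-x|^2}\le v(\tilde x)\pm\sqrt{\varepsilon^2-|\tilde x-x|^2}$. Taking sup or inf gives $\Sp[u]\le\Sp[v]$ and $\Sm[u]\le\Sm[v]$. The average is monotone because $\rhoeps>0$ by Lemma~\ref{lem:density}, and $\talpha,\tbeta\ge0$ since $p\ge2$.

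For \textbf{(iii)}, I combine the bounds from (i): $\tfrac12(\Sp[u]+\Sm[u])$ lies in $[-\|u\|_\infty-\varepsilon/2,\ \|u\|_\infty+\varepsilon/2]$, and the average has absolute value at most $\|u\|_\infty$ because $\rhoeps$ is a probability density. Hence $|Tu(x)|\le \talpha(\|u\|_\infty+\varepsilon/2)+\tbeta\|u\|_\infty\le \|u\|_\infty+\talpha\varepsilon$ on $\Omega$, using $\talpha+\tbeta=1$. Together with $|Tu|\le\|F\|_\infty$ on $\Geps$ this yields the stated maximum bound; the constant obtained is in fact slightly better than the one claimed.

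For \textbf{(iv)}, set $M=\|u-v\|_\infty$. Then $u\le v+M$, and adding a constant commutes with every ingredient on $\Omega$: $\Sp[v+M]=\Sp[v]+M$, similarly for $\Sm$, and the average shifts by $M$ because $\int\rhoeps=1$. However, $v+M\notin\Feps$, so I will not apply (ii) as stated. Instead I will repeat the pointwise argument of (ii) directly for the three ingredients, which gives $Tu\le Tv+M$ on $\Omega$; by symmetry $Tv\le Tu+M$. On $\Geps$ the difference $Tu-Tv$ is zero. The only mildly delicate points in the whole proof are the measurability of the convolution-type term on $\Omega$ and this boundary-class issue in (iv); I expect the Fubini step to be the one needing the most care.
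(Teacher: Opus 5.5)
Your proposal is correct and follows the paper's proof essentially step by step: pointwise comparison of the tilted sup/inf and of the $\rhoeps$-average gives (ii) and (iv), Lemma~\ref{lem:tilt-meas} plus Fubini gives measurability, and the crude bounds on $\Sp[u],\Sm[u]$ give (i) and (iii). Your sharper constant in (iii) and your care about $v+M\notin\Feps$ in (iv) are harmless refinements; testing $\tilde x=x$ even gives $|Tu|\le\|u\|_\infty$ on $\Omega$. One caveat, which you inherit from the paper rather than introduce: the genuinely delicate step is not Fubini but the measurability of $\Sp[u],\Sm[u]$. Lemma~\ref{lem:tilt-meas} overlooks that, under the closed constraint $|\tilde x-x|\le\varepsilon$, a point with $u(\tilde x)>\lambda$ contributes the closed ball $\overline{B_\varepsilon(\tilde x)}$; for instance, $u=c\,\mathbf{1}_{\{y_0\}}$ with $c>\lambda>\varepsilon$ has superlevel set $\Omega\cap\overline{B_\varepsilon(y_0)}$. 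So openness of $\{\Sp[u]>\lambda\}$ is not established as written, and the argument only goes through with the strict constraint $|\tilde x-x|<\varepsilon$, which is in fact what projecting the open ball $B_\varepsilon^{n+1}(x,s)$ produces.
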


\begin{proof}
\textit{(i) Well definedness.}
Let $u\in\Feps$.  On $\Geps$, $Tu=F$ by definition, so only the interior
part requires verification.

By Lemma~\ref{lem:tilt-meas}, both $\Sp[u]$ and $\Sm[u]$ are Borel on
$\Omega$.  Since $u$ is bounded and Borel on $\Oeps$, the map
\[
  (x,h)\mapsto u(x+h)\rhoeps(h)
\]
is Borel on $\Omega\times B_\varepsilon(0)$ and bounded by
$\|u\|_\infty\rhoeps(h)$.  Therefore
\[
  x\mapsto \int_{B_\varepsilon(0)}u(x+h)\rhoeps(h)\,dh
\]
is Borel measurable by Fubini's theorem.  Hence $Tu$ is Borel on
$\Omega$, and therefore on $\Oeps$.

It remains to check boundedness.  If $|u|\le M$ on $\Oeps$, then for
$x\in\Omega$,
\[
  \Sp[u](x)\le M+\varepsilon,
  \qquad
  \Sm[u](x)\ge -M-\varepsilon,
\]
and similarly
\[
  \Sp[u](x)\ge -M,
  \qquad
  \Sm[u](x)\le M.
\]
A crude bound is therefore
\[
  |\Sp[u](x)|\le M+\varepsilon,
  \qquad
  |\Sm[u](x)|\le M+\varepsilon.
\]
Moreover, since $\rhoeps$ is a probability density,
\[
  \left|
    \int_{B_\varepsilon(0)}u(x+h)\rhoeps(h)\,dh
  \right|
  \le M.
\]
Thus
\[
  |Tu(x)|
  \le
  \frac{\talpha}{2}(M+\varepsilon+M+\varepsilon)+\tbeta M
  =
  M+\talpha\varepsilon.
\]
Hence $Tu$ is bounded and belongs to $\Feps$.

\textit{(ii) Monotonicity.}
Assume $u\le v$ in $\Oeps$.  Then for every admissible $\tilde x$,
\[
  u(\tilde x)+\sqrt{\varepsilon^2-|\tilde x-x|^2}
  \le
  v(\tilde x)+\sqrt{\varepsilon^2-|\tilde x-x|^2},
\]
and similarly
\[
  u(\tilde x)-\sqrt{\varepsilon^2-|\tilde x-x|^2}
  \le
  v(\tilde x)-\sqrt{\varepsilon^2-|\tilde x-x|^2}.
\]
Taking suprema and infima gives
\[
  \Sp[u](x)\le \Sp[v](x),
  \qquad
  \Sm[u](x)\le \Sm[v](x).
\]
Since $\rhoeps\ge 0$, we also have
\[
  \int_{B_\varepsilon(0)}u(x+h)\rhoeps(h)\,dh
  \le
  \int_{B_\varepsilon(0)}v(x+h)\rhoeps(h)\,dh.
\]
Therefore $Tu\le Tv$ on $\Omega$, and the boundary values are identical
on $\Geps$.

\textit{(iii) Supremum norm stability.}
This was already proved in part \textit{(i)} on $\Omega$, while on
$\Geps$ we have $Tu=F$.  Hence
\[
  \|Tu\|_\infty
  \le
  \max\{\|F\|_{L^\infty(\Geps)},\,\|u\|_\infty+\talpha\varepsilon\}.
\]

\textit{(iv) Nonexpansiveness.}
Let $\delta:=\|u-v\|_\infty$.  Then for every admissible $\tilde x$,
\[
  u(\tilde x)\le v(\tilde x)+\delta,
  \qquad
  v(\tilde x)\le u(\tilde x)+\delta.
\]
Adding the same tilt term and taking suprema yields
\[
  |\Sp[u](x)-\Sp[v](x)|\le \delta.
\]
The same argument gives
\[
  |\Sm[u](x)-\Sm[v](x)|\le \delta.
\]
For the integral term,
\[
  \left|
    \int_{B_\varepsilon(0)}(u-v)(x+h)\rhoeps(h)\,dh
  \right|
  \le
  \delta\int_{B_\varepsilon(0)}\rhoeps(h)\,dh
  =\delta.
\]
Therefore, for $x\in\Omega$,
\begin{align*}
  |Tu(x)-Tv(x)|
  &\le
  \frac{\talpha}{2}|\Sp[u](x)-\Sp[v](x)|
  +
  \frac{\talpha}{2}|\Sm[u](x)-\Sm[v](x)|\\
  &\qquad
  +
  \tbeta
  \left|
    \int_{B_\varepsilon(0)}(u-v)(x+h)\rhoeps(h)\,dh
  \right|\\
  &\le
  \frac{\talpha}{2}\delta+\frac{\talpha}{2}\delta+\tbeta\delta
  =
  (\talpha+\tbeta)\delta
  =
  \delta.
\end{align*}
On $\Geps$, both $Tu$ and $Tv$ equal $F$.  Thus
\[
  \|Tu-Tv\|_\infty\le \delta=\|u-v\|_\infty.
\]
\end{proof}

\begin{remark}
The operator $T$ is monotone and nonexpansive, but in general it is not
a strict contraction in the supremum norm.  The existence theory will
therefore rely on monotone iteration rather than the Banach fixed point
theorem.
\end{remark}

\subsection{Fixed points and the projected DPP}

We conclude this section by recording the fixed-point formulation that
will be used throughout the paper.

\begin{definition}[Solution of the projected DPP]
\label{def:solution-dpp}
A function $v_\varepsilon\in\Feps$ is called a solution of the
projected dynamic programming principle if
\[
  Tv_\varepsilon=v_\varepsilon
  \qquad\text{in }\Oeps.
\]
Equivalently,
\[
  v_\varepsilon(x)
  =
  \frac{\talpha}{2}\bigl(\Sp[v_\varepsilon](x)+\Sm[v_\varepsilon](x)\bigr)
  +
  \tbeta\int_{B_\varepsilon(0)}v_\varepsilon(x+h)\rhoeps(h)\,dh
\]
for every $x\in\Omega$, together with the boundary condition
$v_\varepsilon=F$ on $\Geps$.
\end{definition}
In other words,
\begin{equation}\label{eq:DPP-v-boundary}
\begin{cases}
v_\varepsilon(x)
=
\dfrac{\talpha}{2}\bigl(\Sp[v_\varepsilon](x)+\Sm[v_\varepsilon](x)\bigr)
+\tbeta\displaystyle\int_{B_\varepsilon(0)}v_\varepsilon(x+h)\rhoeps(h)\,dh,
& x\in\Omega,\\[1.2ex]
v_\varepsilon(x)=F(x), & x\in\Geps.
\end{cases}
\end{equation}

The existence, uniqueness, and game-theoretic interpretation of such
fixed points will be established in the following sections.

\section{Existence and uniqueness}
\label{sec:existence}

In this section we prove that the projected dynamic programming
principle has a unique bounded Borel solution with prescribed strip
boundary data.  The proof follows the monotone-iteration strategy used
for the standard $p$-harmonious functions, but it must be adapted to
the present setting because the uniform averaging operator is replaced
by the semicircular kernel and the ordinary sup/inf terms are replaced
by the tilted functionals.

\subsection{Existence by monotone iteration}

We begin with the existence theorem.

\begin{theorem}[Existence]
\label{thm:exist}
Let $\Omega\subset\R^n$ be bounded, let $\varepsilon>0$, and let
$F:\Geps\to\R$ be a bounded Borel function.  Then there exists a
bounded Borel function $v_\varepsilon\in\Feps$ such that
\[
  Tv_\varepsilon=v_\varepsilon
  \qquad\text{in }\Oeps.
\]
Moreover, if
\begin{equation}\label{eq:u0}
  u_0(x):=
  \begin{cases}
    \inf_{y\in\Geps}F(y),& x\in\Omega,\\
    F(x),& x\in\Geps,
  \end{cases}
\end{equation}
and $u_{j+1}:=Tu_j$, then $u_j\to v_\varepsilon$ uniformly on $\Oeps$.
\end{theorem}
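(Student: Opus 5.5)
The plan is to follow the monotone-iteration scheme of \cite{MPR12}, using only the properties collected in Lemma~\ref{lem:T-props}. There are four steps: show the iterates increase, show they stay bounded, upgrade pointwise convergence to uniform convergence, and pass to the limit in the equation.

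\textbf{Step 1: the iterates increase.} First we check that $u_1=Tu_0\ge u_0$. On $\Geps$ both functions equal $F$. For $x\in\Omega$ we use the admissible choice $\tilde x=x$, for which the tilt term equals $\varepsilon$, and write $m:=\inf_{\Geps}F$. This gives
\[
\Sp[u_0](x)\ge m+\varepsilon .
\]
Every value of $u_0$ is at least $m$ and every tilt is at most $\varepsilon$, so
\[
\Sm[u_0](x)\ge m-\varepsilon .
\]
The two $\varepsilon$'s cancel in the symmetric combination $\frac{\talpha}{2}(\Sp+\Sm)$. Since $\rhoeps$ is a probability density (Lemma~\ref{lem:density}), the noise term is $\ge m$. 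Hence $Tu_0(x)\ge m=u_0(x)$. Monotonicity (Lemma~\ref{lem:T-props}(ii)) and induction then give $u_j\le u_{j+1}$ for all $j$.

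\textbf{Step 2: the iterates are bounded above.} Let $\bar u$ equal $\sup_{\Geps}F=:M_F$ on $\Omega$ and $F$ on $\Geps$. The mirror computation gives $T\bar u\le\bar u$: we have $\Sp[\bar u]\le M_F+\varepsilon$, and the choice $\tilde x=x$ gives $\Sm[\bar u]\le M_F-\varepsilon$. Since $u_0\le\bar u$, induction with monotonicity gives $u_j\le\bar u$. So $(u_j)$ is increasing and bounded. It therefore converges pointwise to a bounded Borel function $v_\varepsilon$ with $v_\varepsilon=F$ on $\Geps$, i.e.\ $v_\varepsilon\in\Feps$. Also $u_j\le v_\varepsilon$ gives $u_{j+1}=Tu_j\le Tv_\varepsilon$, and letting $j\to\infty$ yields $v_\varepsilon\le Tv_\varepsilon$.

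\textbf{Step 3 (main obstacle): uniform convergence.} The difficulty is that the infimum term $\Sm$ does not commute with increasing limits, so pointwise convergence alone cannot be fed into the equation. We set
\[
M_k:=\sup_{\Oeps}(v_\varepsilon-u_k)\ge 0 .
\]
This sequence is nonincreasing with limit $M\ge 0$. For $x\in\Omega$ we combine $v_\varepsilon\le Tv_\varepsilon$ with $u_{k+1}=Tu_k$. The tilt terms are the same for $u_k$ and $v_\varepsilon$, so the tilted sup and inf differences are each at most $M_k$. This gives
\[
v_\varepsilon(x)-u_{k+1}(x)\le \talpha M_k+\tbeta\int_{B_\varepsilon(0)}(v_\varepsilon-u_k)(x+h)\rhoeps(h)\,dh .
\]
The key point is that the semicircular kernel is bounded, $\rhoeps\le 2\varepsilon/|B^{n+1}_\varepsilon|$. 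Hence the integral is at most
\[
\eta_k:=C_\varepsilon\int_{\Oeps}(v_\varepsilon-u_k)\,dy ,
\]
uniformly in $x$. Since $\Oeps$ is bounded and the differences are uniformly bounded, dominated convergence gives $\eta_k\to0$. On $\Geps$ the difference vanishes. Therefore
\[
M_{k+1}\le\talpha M_k+\tbeta\eta_k .
\]
Letting $k\to\infty$ gives $M\le\talpha M$, that is $\tbeta M\le 0$. Since $\tbeta=(n+3)/(p+n+1)>0$, we conclude $M=0$, so $u_k\to v_\varepsilon$ uniformly.

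\textbf{Step 4: the limit is a fixed point.} By nonexpansiveness (Lemma~\ref{lem:T-props}(iv)),
\[
\|Tv_\varepsilon-u_{k+1}\|_\infty=\|Tv_\varepsilon-Tu_k\|_\infty\le\|v_\varepsilon-u_k\|_\infty\to0 .
\]
Since also $u_{k+1}\to v_\varepsilon$ uniformly, we get $Tv_\varepsilon=v_\varepsilon$, which completes the proof. I expect Step~3 to be the only delicate point, and the boundedness of $\rhoeps$ is exactly what makes the noise term controllable uniformly in $x$.
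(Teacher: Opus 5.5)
Your proof is correct. Its skeleton matches the paper's: monotone increase from the lower barrier, the upper barrier $\overline u_0$ with $T\overline u_0\le\overline u_0$, and passage to the fixed point by nonexpansiveness. The genuine difference is in the uniform-convergence step.

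\emph{Your route.} You bound the noise term uniformly in $x$ by using $\rhoeps\le 2\varepsilon/|B^{n+1}_\varepsilon|$ together with $B_\varepsilon(x)\subset\Oeps$. Combined with $v_\varepsilon\le Tv_\varepsilon$ and $u_{k+1}=Tu_k$, this gives the recursion $M_{k+1}\le\talpha M_k+\tbeta\eta_k$, where $\eta_k\to0$ independently of $x$. Then $M_\infty=0$ follows at once from $\tbeta>0$.

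\emph{The paper's route.} The paper argues by contradiction. It fixes $k$, then picks $x_0$ with $e_{k+1}(x_0)\ge M_\infty-\delta$. It then invokes the \emph{pointwise} dominated-convergence statement $\int_{B_\varepsilon(0)}e_k(x_0+h)\rhoeps(h)\,dh\le\delta/\tbeta$ ``for $k$ sufficiently large''. But $x_0$ was chosen after $k$ and depends on it. Pointwise convergence at each fixed point therefore does not by itself give this smallness. The argument tacitly needs $x\mapsto\int_{B_\varepsilon(0)}e_k(x+h)\rhoeps(h)\,dh$ to tend to zero uniformly in $x$.

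That uniformity is exactly what your $L^\infty$ bound on the kernel supplies. One could also obtain it from continuity of these convolutions on $\overline\Omega$ and Dini's theorem. So your version is not merely a stylistic variant: it is the direct analogue of the standard uniform-kernel argument, with $1/|B_\varepsilon|$ replaced by $\|\rhoeps\|_\infty$, and it closes a quantifier gap in the paper's Step~4.
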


\begin{proof}
Let
\[
  m:=\inf_{y\in\Geps}F(y),
  \qquad
  M:=\sup_{y\in\Geps}F(y).
\]
Define $u_0$ by \eqref{eq:u0}, and recursively set
\[
  u_{j+1}:=Tu_j,
  \qquad j=0,1,2,\dots.
\]

\medskip
\noindent\textit{Step 1: the sequence $(u_j)$ is monotone increasing.}
Since $u_0\in\Feps$, Lemma~\ref{lem:T-props}(i) implies that each
$u_j\in\Feps$.

Fix $x\in\Omega$.  Because $u_0(\tilde x)\ge m=u_0(x)$ for every
$\tilde x\in\Oeps$, we have
\[
  \Sp[u_0](x)\ge m+\varepsilon,
  \qquad
  \Sm[u_0](x)\ge m-\varepsilon,
\]
and
\[
  \int_{B_\varepsilon(0)}u_0(x+h)\rhoeps(h)\,dh\ge m.
\]
Therefore
\begin{align*}
  u_1(x)=Tu_0(x)
  &=
  \frac{\talpha}{2}\bigl(\Sp[u_0](x)+\Sm[u_0](x)\bigr)
  +\tbeta\int_{B_\varepsilon(0)}u_0(x+h)\rhoeps(h)\,dh\\
  &\ge
  \frac{\talpha}{2}\bigl((m+\varepsilon)+(m-\varepsilon)\bigr)+\tbeta m
  =m=u_0(x).
\end{align*}
On $\Geps$ we have $u_1=u_0=F$.  Hence $u_1\ge u_0$ on $\Oeps$.
By monotonicity of $T$,
\[
  u_j\ge u_{j-1}\quad\Longrightarrow\quad
  u_{j+1}=Tu_j\ge Tu_{j-1}=u_j,
\]
so by induction
\begin{equation}\label{eq:monotone-up}
  u_0\le u_1\le u_2\le \cdots \qquad \text{in }\Oeps.
\end{equation}

\medskip
\noindent\textit{Step 2: the sequence $(u_j)$ is uniformly bounded above.}
Define
\[
  \overline u_0(x):=
  \begin{cases}
    M,& x\in\Omega,\\
    F(x),& x\in\Geps.
  \end{cases}
\]
Then $\overline u_0\in\Feps$, and for every $x\in\Omega$,
\[
  \Sp[\overline u_0](x)\le M+\varepsilon,
  \qquad
  \Sm[\overline u_0](x)\le M-\varepsilon,
\]
while
\[
  \int_{B_\varepsilon(0)}\overline u_0(x+h)\rhoeps(h)\,dh\le M.
\]
Hence
\begin{align*}
  T\overline u_0(x)
  &\le
  \frac{\talpha}{2}\bigl((M+\varepsilon)+(M-\varepsilon)\bigr)+\tbeta M
  =M=\overline u_0(x).
\end{align*}
Thus $T\overline u_0\le \overline u_0$ on $\Oeps$.  Since
$u_0\le \overline u_0$, monotonicity of $T$ gives
\[
  u_j\le \overline u_0
  \qquad\text{for all }j.
\]
Combining with \eqref{eq:monotone-up}, we obtain
\begin{equation}\label{eq:bounds}
  m\le u_j(x)\le M
  \qquad\text{for all }x\in\Oeps,\ j\ge 0.
\end{equation}

\medskip
\noindent\textit{Step 3: pointwise convergence.}
By \eqref{eq:monotone-up} and \eqref{eq:bounds}, the pointwise limit
\[
  v_\varepsilon(x):=\lim_{j\to\infty}u_j(x),
  \qquad x\in\Oeps,
\]
exists.  Since the limit of an increasing sequence of bounded Borel
functions is Borel, we have $v_\varepsilon\in\Feps$.

\medskip
\noindent\textit{Step 4: uniform convergence.}
Set
\[
  e_j:=v_\varepsilon-u_j\ge 0,
  \qquad
  M_j:=\sup_{x\in\Oeps}e_j(x).
\]
Because $u_j\uparrow v_\varepsilon$, the sequence $(M_j)$ is
nonincreasing, so it converges to some limit $M_\infty\ge 0$.
We claim that $M_\infty=0$.

Assume, to the contrary, that $M_\infty>0$.  Fix $\delta>0$.
Choose $k$ so large that
\begin{equation}\label{eq:ek-bound}
  0\le e_k\le M_\infty+\delta
  \qquad\text{in }\Oeps.
\end{equation}
Since $e_k\to 0$ pointwise on $\Oeps$ as $k\to\infty$ and
$0\le e_k\le 2\|F\|_\infty$, dominated convergence gives, for each fixed
$x\in\Omega$,
\[
  \int_{B_\varepsilon(0)}e_k(x+h)\rhoeps(h)\,dh\to 0
  \qquad\text{as }k\to\infty.
\]

Now choose $x_0\in\Omega$ and indices $\ell>k$ so that
\begin{equation}\label{eq:pick-x0}
  e_{k+1}(x_0)\ge M_\infty-\delta,
  \qquad
  e_{\ell+1}(x_0)<\delta.
\end{equation}
Then
\begin{equation}\label{eq:difference-lower}
  u_{\ell+1}(x_0)-u_{k+1}(x_0)
  =
  e_{k+1}(x_0)-e_{\ell+1}(x_0)
  \ge M_\infty-2\delta.
\end{equation}

On the other hand, by the definition of $T$, the fact that $u_\ell\ge u_k$,
and the elementary inequalities
\[
  \sup_A f-\sup_A g\le \sup_A(f-g),
  \qquad
  \inf_A f-\inf_A g\le \sup_A(f-g),
\]
we have
\begin{align*}
  u_{\ell+1}(x_0)-u_{k+1}(x_0)
  &=
  \frac{\talpha}{2}\Bigl(\Sp[u_\ell](x_0)-\Sp[u_k](x_0)\Bigr)
  +\frac{\talpha}{2}\Bigl(\Sm[u_\ell](x_0)-\Sm[u_k](x_0)\Bigr)\\
  &\qquad
  +\tbeta\int_{B_\varepsilon(0)}(u_\ell-u_k)(x_0+h)\rhoeps(h)\,dh\\
  &\le
  \talpha\sup_{\substack{\tilde x\in\Oeps\\|\tilde x-x_0|\le\varepsilon}}
  \bigl(u_\ell(\tilde x)-u_k(\tilde x)\bigr)
  +\tbeta\int_{B_\varepsilon(0)}(u_\ell-u_k)(x_0+h)\rhoeps(h)\,dh\\
  &\le
  \talpha\sup_{\Oeps}(v_\varepsilon-u_k)
  +\tbeta\int_{B_\varepsilon(0)}(v_\varepsilon-u_k)(x_0+h)\rhoeps(h)\,dh\\
  &=
  \talpha M_k+\tbeta\int_{B_\varepsilon(0)}e_k(x_0+h)\rhoeps(h)\,dh.
\end{align*}
For $k$ sufficiently large, \eqref{eq:ek-bound} implies
\[
  M_k\le M_\infty+\delta
\]
and the pointwise dominated-convergence statement gives
\[
  \tbeta\int_{B_\varepsilon(0)}e_k(x_0+h)\rhoeps(h)\,dh\le \delta.
\]
Hence
\begin{equation}\label{eq:difference-upper}
  u_{\ell+1}(x_0)-u_{k+1}(x_0)
  \le \talpha(M_\infty+\delta)+\delta.
\end{equation}
Combining \eqref{eq:difference-lower} and \eqref{eq:difference-upper},
\[
  M_\infty-2\delta\le \talpha(M_\infty+\delta)+\delta.
\]
Since $\talpha<1$, this is impossible for sufficiently small $\delta>0$.
Therefore $M_\infty=0$, that is,
\[
  \|v_\varepsilon-u_j\|_\infty\to 0.
\]

\medskip
\noindent\textit{Step 5: the limit is a fixed point.}
By nonexpansiveness of $T$,
\[
  \|Tv_\varepsilon-v_\varepsilon\|_\infty
  \le
  \|Tv_\varepsilon-Tu_j\|_\infty+\|u_{j+1}-v_\varepsilon\|_\infty
  \le
  \|v_\varepsilon-u_j\|_\infty+\|u_{j+1}-v_\varepsilon\|_\infty.
\]
Letting $j\to\infty$ yields
\[
  \|Tv_\varepsilon-v_\varepsilon\|_\infty=0.
\]
Hence $Tv_\varepsilon=v_\varepsilon$ on $\Oeps$.
\end{proof}

\begin{remark}
The proof above uses only monotonicity, boundedness, the strip
formulation, the positivity of the kernel $\rhoeps$, and a dominated
convergence argument, just as in the standard $p$-harmonious theory.
The tilted strategic terms do not affect the core structure of the
iteration argument.
\end{remark}

\subsection{Comparison and uniqueness}

We next prove uniqueness by a comparison argument.

\begin{theorem}[Comparison principle]
\label{thm:unique}
Let $u,v:\Oeps\to\R$ be bounded Borel functions satisfying
\[
  Tu=u,\qquad Tv=v
  \qquad\text{in }\Oeps,
\]
with boundary values $g:=u|_{\Geps}$ and $h:=v|_{\Geps}$,
respectively.  Then
\[
  \sup_{x\in\Omega}(u-v)(x)
  \le
  \sup_{x\in\Geps}(g-h)(x).
\]
In particular, the projected dynamic programming principle
\eqref{eq:DPP-v}--\eqref{eq:DPP-v-boundary} has at most one solution in
$\Feps$.
\end{theorem}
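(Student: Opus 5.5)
Set $M:=\sup_{\Oeps}(u-v)$ and $\delta:=\sup_{\Geps}(g-h)$. I argue by contradiction, following the standard strip-based comparison for $p$-harmonious functions in \cite{MPR12}. Suppose $M>\delta$; in particular $M>\sup_{\Geps}(u-v)$, so $M=\sup_{\Omega}(u-v)$. The aim is to show that the set where $u-v$ is close to $M$ propagates through $\Omega$ until it reaches $\Geps$, which is impossible.

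\textbf{Step 1: a one-step inequality.} For $x\in\Omega$, use $Tu=u$ and $Tv=v$ together with the elementary inequalities already used in Step~4 of Theorem~\ref{thm:exist}: the tilt terms cancel in the differences, so
\[
\Sp[u](x)-\Sp[v](x)\le \sup_{|\tilde x-x|\le\varepsilon}(u-v)(\tilde x)\le M,
\qquad
\Sm[u](x)-\Sm[v](x)\le M .
\]
Hence
\[
(u-v)(x)\le \talpha M+\tbeta\int_{B_\varepsilon(0)}(u-v)(x+h)\rhoeps(h)\,dh .
\]
Since $\tbeta>0$, this gives
\[
\int_{B_\varepsilon(0)}(M-(u-v))(x+h)\rhoeps(h)\,dh\le \frac{M-(u-v)(x)}{\tbeta}.
\]

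\textbf{Step 2: propagation via approximate maximizers.} Because $u-v$ is only Borel, a maximum need not be attained, so I work with near-maximizers. Pick $x_0\in\Omega$ with $(u-v)(x_0)>M-\eta$. By Step~1, the $\rhoeps$-weighted average of $M-(u-v)$ over $B_\varepsilon(x_0)$ is at most $\eta/\tbeta$. Since $\rhoeps$ is bounded below by a positive constant $c$ on $B_{\varepsilon/2}(0)$, Chebyshev's inequality shows that the points $y\in B_{\varepsilon/2}(x_0)$ with $(u-v)(y)>M-\eta'$, where $\eta'=C\eta$, form a set of positive measure. This set contains points in any prescribed half-ball region of $B_{\varepsilon/2}(x_0)$, for instance the part lying in a fixed direction $e$, once $\eta'$ is chosen relative to the measure of that region. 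I can therefore choose $x_1$ with $\langle x_1-x_0,e\rangle\ge\varepsilon/8$ and $(u-v)(x_1)>M-C\eta$.

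\textbf{Step 3: reaching the strip.} Iterate Step~2 $N$ times with $N\sim 8\,\mathrm{diam}(\Omega)/\varepsilon$. Since $\Omega$ is bounded, some $x_j$ leaves $\Omega$ and lands in $\Geps$, because each step moves at most $\varepsilon/2$. At that point
\[
(u-v)(x_j)>M-C^N\eta .
\]
Choosing $\eta$ so small that $C^N\eta<M-\delta$ contradicts $(u-v)\le\delta$ on $\Geps$. Therefore $\sup_\Omega(u-v)\le\delta$.

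\textbf{Uniqueness.} If $u,v\in\Feps$ solve \eqref{eq:DPP-v-boundary}, then $g=h=F$. Applying the inequality to $u-v$ and to $v-u$ gives $u=v$.

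The main obstacle is Step~2. The quantitative positive-measure argument must be carried out carefully, with constants depending only on $n$, $p$ and $\varepsilon$, so that the factor $C^N$ stays finite. The inputs are the lower bound on $\rhoeps$ on the smaller ball and the strict positivity $\tbeta>0$. The strategic terms contribute only the harmless bound $\talpha M$, because the tilt is the same in $\Sp[u]$ and $\Sp[v]$ and so cancels.
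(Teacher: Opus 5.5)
Your proof is correct, but it organizes the propagation differently from the paper.

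\textbf{The paper's route.} It first produces an \emph{exact} maximum set. It takes $x_j\in\Omega$ with $(u-v)(x_j)\to M$ and extracts $x_j\to x_0\in\overline\Omega$ by compactness. Continuity of $x\mapsto\int_{B_\varepsilon(0)}(u-v)(x+h)\rhoeps(h)\,dh$ then gives $\int_{B_\varepsilon(0)}(u-v)(x_0+h)\rhoeps(h)\,dh=M$, so $G=\{u-v=M\}$ is nonempty. Strict positivity of $\rhoeps$ yields the qualitative propagation $x\in G\Rightarrow|B_\varepsilon(x)\setminus G|=0$. A chain of points of $G$, each advancing by at least $\varepsilon/4$ in a fixed direction, then contradicts boundedness of $\Omega$ (since $G\subset\Omega$). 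The paper never evaluates $u-v$ on $\Geps$.

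\textbf{Your route.} You run the chain with near-maximizers instead. Each step costs a fixed factor $C$, coming from the lower bound $\rhoeps\ge c>0$ on $B_{\varepsilon/2}(0)$ and Chebyshev. After at most $N$ steps you land in $\Geps$, where $u-v\le\delta$ contradicts $(u-v)(x_j)>M-C^N\eta$ once $\eta<C^{-N}(M-\delta)$. This is legitimate for two reasons. First, $C$ and $N$ depend only on $n,p,\varepsilon,\Omega$, so $\eta$ can be fixed afterwards. Second, each step stays within $\varepsilon/2$ of a point of $\Omega$, so the first exit lies in $\Geps$. You should state explicitly that you stop at the first index $j$ with $x_j\notin\Omega$, because the one-step inequality of Step~1 is only available at points of $\Omega$.

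\textbf{What each buys.} Your argument needs neither compactness nor continuity of the $\rhoeps$-averaging map. The paper justifies that continuity ``by dominated convergence,'' which for merely Borel $u-v$ really requires moving the translation onto the kernel, as in $\int(u-v)(y)\rhoeps(y-x)\,dy$. The price is a quantitative lower bound on the kernel and some bookkeeping of constants. The paper's argument is constant-free and uses only $\rhoeps>0$ a.e., but it rests on that continuity step.
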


\begin{proof}
Set
\[
  m:=\sup_{x\in\Geps}(g-h)(x),
  \qquad
  M:=\sup_{x\in\Omega}(u-v)(x).
\]
We must show that $M\le m$.

Assume for contradiction that $M>m$.  Define
\[
  G:=\{x\in\Oeps:(u-v)(x)=M\}.
\]
Since $u-v\le m<M$ on $\Geps$, we have
\[
  G\subset\Omega.
\]

Fix $x\in\Omega$.  Since $u$ and $v$ are fixed points of $T$,
\begin{align*}
  (u-v)(x)
  &=
  \frac{\talpha}{2}\Bigl(\Sp[u](x)-\Sp[v](x)\Bigr)
  +\frac{\talpha}{2}\Bigl(\Sm[u](x)-\Sm[v](x)\Bigr)\\
  &\qquad
  +\tbeta\int_{B_\varepsilon(0)}(u-v)(x+h)\rhoeps(h)\,dh.
\end{align*}
Using
\[
  \sup_A f-\sup_A g\le \sup_A(f-g),
  \qquad
  \inf_A f-\inf_A g\le \sup_A(f-g),
\]
we obtain
\begin{align}
  (u-v)(x)
  &\le
  \talpha
  \sup_{\substack{\tilde x\in\Oeps\\|\tilde x-x|\le\varepsilon}}
  (u-v)(\tilde x)
  +\tbeta\int_{B_\varepsilon(0)}(u-v)(x+h)\rhoeps(h)\,dh
  \notag\\
  &\le
  \talpha M
  +\tbeta\int_{B_\varepsilon(0)}(u-v)(x+h)\rhoeps(h)\,dh.
  \label{eq:comparison-ineq}
\end{align}

Now choose a sequence $x_j\in\Omega$ such that
\[
  (u-v)(x_j)\to M.
\]
Since $\overline\Omega$ is compact, after passing to a subsequence we may
assume $x_j\to x_0\in\overline\Omega$.  Because $u-v\le m<M$ on $\Geps$,
the limit point satisfies $x_0\in\Omega$ (this will also follow from the
integral argument below).
 
Applying \eqref{eq:comparison-ineq} at $x_j$ and using $(u-v)(x_j)\to M$
gives
\[
  M-o(1)
  \le
  \talpha M
  +\tbeta\int_{B_\varepsilon(0)}(u-v)(x_j+h)\rhoeps(h)\,dh,
\]
hence
\begin{equation}\label{eq:integral-to-M}
  \int_{B_\varepsilon(0)}(u-v)(x_j+h)\rhoeps(h)\,dh\to M.
\end{equation}
Since $(u-v)\le M$ everywhere and $\rhoeps\ge 0$, the integrand
$M-(u-v)(x_j+h)\ge 0$ for all $j$ and all $h$.  The map
\[
  x\mapsto\int_{B_\varepsilon(0)}(u-v)(x+h)\rhoeps(h)\,dh
\]
is continuous in $x$: it is the convolution of the bounded Borel
function $u-v$ with the $L^1$ kernel $\rhoeps$, and convolution with
an $L^1$ kernel is continuous by dominated convergence.  Since
$x_j\to x_0$, continuity gives
\[
  \int_{B_\varepsilon(0)}(u-v)(x_j+h)\rhoeps(h)\,dh
  \;\to\;
  \int_{B_\varepsilon(0)}(u-v)(x_0+h)\rhoeps(h)\,dh.
\]
Combining with \eqref{eq:integral-to-M}, we obtain
\[
  \int_{B_\varepsilon(0)}(u-v)(x_0+h)\rhoeps(h)\,dh=M.
\]
Since $(u-v)\le M$ everywhere and $\rhoeps>0$ on $B_\varepsilon(0)$,
it follows that
\[
  (u-v)(x_0+h)=M\qquad\text{for a.e. }h\in B_\varepsilon(0).
\]
Thus $x_0+h\in G$ for almost every $h$, which forces
$x_0\in\Omega$ and $G\neq\emptyset$.

We next show the propagation property:
\begin{equation}\label{eq:G-propagation}
  x\in G
  \quad\Longrightarrow\quad
  |\Be(x)\setminus G|=0.
\end{equation}
Indeed, if $x\in G$, then $(u-v)(x)=M$, and \eqref{eq:comparison-ineq}
gives
\[
  M
  \le
  \talpha M
  +\tbeta\int_{B_\varepsilon(0)}(u-v)(x+h)\rhoeps(h)\,dh
  \le
  M.
\]
Therefore equality holds throughout, and again, since $\rhoeps>0$, we
must have
\[
  (u-v)(x+h)=M
  \qquad\text{for a.e. }h\in B_\varepsilon(0).
\]
This is exactly \eqref{eq:G-propagation}.

Finally, \eqref{eq:G-propagation} contradicts the boundedness of
$\Omega$.  Fix a unit vector $e_1$.  If $x\in G$, then
\[
  B_{\varepsilon/4}\!\left(x+\frac{\varepsilon}{2}e_1\right)
  \subset B_\varepsilon(x).
\]
Since $|B_\varepsilon(x)\setminus G|=0$, this ball intersects $G$.
Starting from any $x_0\in G$, choose inductively
\[
  x_{k+1}\in
  G\cap
  B_{\varepsilon/4}\!\left(x_k+\frac{\varepsilon}{2}e_1\right).
\]
Then
\[
  e_1\cdot x_{k+1}\ge e_1\cdot x_k+\frac{\varepsilon}{4},
\]
so $e_1\cdot x_k\to+\infty$.  This is impossible because
$G\subset\Omega$ and $\Omega$ is bounded.  Hence the assumption $M>m$
was false, and we conclude that
\[
  \sup_{x\in\Omega}(u-v)(x)\le \sup_{x\in\Geps}(g-h)(x).
\]
\end{proof}

\begin{corollary}[Uniqueness]
\label{cor:uniqueness}
For every bounded Borel boundary datum $F:\Geps\to\R$, there exists a
unique bounded Borel solution $v_\varepsilon\in\Feps$ of
\eqref{eq:DPP-v}--\eqref{eq:DPP-v-boundary}.
\end{corollary}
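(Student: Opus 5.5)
The corollary follows by combining Theorem~\ref{thm:exist} with Theorem~\ref{thm:unique}.

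\emph{Existence.} Given a bounded Borel datum $F$ on $\Geps$, Theorem~\ref{thm:exist} produces $v_\varepsilon\in\Feps$ with $Tv_\varepsilon=v_\varepsilon$ in $\Oeps$. This $v_\varepsilon$ is obtained as the uniform limit of the monotone iteration started from \eqref{eq:u0}. By Definition~\ref{def:solution-dpp}, such a fixed point is exactly a bounded Borel solution of \eqref{eq:DPP-v}--\eqref{eq:DPP-v-boundary}. In particular, $v_\varepsilon=F$ on $\Geps$, since $Tu=F$ there for every $u\in\Feps$.

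\emph{Uniqueness.} Suppose $u,v\in\Feps$ are two solutions. Then $Tu=u$ and $Tv=v$, and both agree with $F$ on $\Geps$, so $g=h=F$. Applying Theorem~\ref{thm:unique} to the pair $(u,v)$ gives
\[
  \sup_{\Omega}(u-v)\le\sup_{\Geps}(F-F)=0 .
\]
Applying it to the pair $(v,u)$ gives $\sup_\Omega(v-u)\le 0$. Hence $u=v$ on $\Omega$, and since $u=v=F$ on $\Geps$, we get $u=v$ on all of $\Oeps$.

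No step here is a genuine obstacle; the substantive work was done in the two theorems. The only point to check is that Theorem~\ref{thm:unique} applies to arbitrary elements of $\Feps$. It does, because its hypotheses require only that $u$ and $v$ be bounded Borel fixed points of $T$, which is exactly membership in $\Feps$ together with the fixed-point equation.
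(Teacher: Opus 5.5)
Your proof is correct and matches the paper's argument: existence is Theorem~\ref{thm:exist}, and uniqueness follows from Theorem~\ref{thm:unique} with $g=h=F$. Applying the comparison to both $(u,v)$ and $(v,u)$, as you do, spells out what the paper's one-line proof ("taking the same boundary data for both solutions") leaves implicit.
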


\begin{proof}
Existence is Theorem~\ref{thm:exist}.  Uniqueness follows immediately
from Theorem~\ref{thm:unique} by taking the same boundary data for both
solutions.
\end{proof}

\subsection{Convergence of iterates from arbitrary initial data}

The monotone iteration from the lower barrier constructed in
Theorem~\ref{thm:exist} is sufficient for existence, but once
uniqueness is known one obtains convergence from any bounded Borel
starting point with the correct boundary values.

\begin{corollary}[Independence of initialization]
\label{cor:init}
Let $u_0:\Oeps\to\R$ be any bounded Borel function satisfying
$u_0|_{\Geps}=F$, and define recursively
\[
  u_{j+1}:=Tu_j.
\]
Then
\[
  u_j\to v_\varepsilon
  \qquad\text{uniformly on }\Oeps,
\]
where $v_\varepsilon$ is the unique solution of
\eqref{eq:DPP-v}--\eqref{eq:DPP-v-boundary}.
\end{corollary}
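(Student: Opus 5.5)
The plan is a sandwich argument: trap the iterates $T^j u_0$ between the iterates started from a lower and an upper barrier, both of which converge uniformly to $v_\varepsilon$, and then use monotonicity of $T$ (Lemma~\ref{lem:T-props}(ii)).

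Let $K:=\max\{\|u_0\|_\infty,\|F\|_{L^\infty(\Geps)}\}$ and set $\underline u_0:=-K$ on $\Omega$, $\overline u_0:=K$ on $\Omega$, with both equal to $F$ on $\Geps$. Then $\underline u_0,\overline u_0\in\Feps$ and $\underline u_0\le u_0\le\overline u_0$ in $\Oeps$. The computation of Step~1 and Step~2 of Theorem~\ref{thm:exist} gives $T\underline u_0\ge\underline u_0$ and $T\overline u_0\le\overline u_0$. Indeed, at $x\in\Omega$ the tilt contributes $+\varepsilon$ to $\Sp$ and $-\varepsilon$ to $\Sm$ when evaluated at $\tilde x=x$, and all other values are $\ge -K$ (respectively $\le K$). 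The tilts therefore cancel in the average $\frac{\talpha}{2}(\Sp+\Sm)$ exactly as before.

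By monotonicity and induction, $\underline u_j:=T^j\underline u_0$ is nondecreasing, $\overline u_j:=T^j\overline u_0$ is nonincreasing, and
\[
  \underline u_j\le u_j\le\overline u_j
  \qquad\text{in }\Oeps \text{ for all } j.
\]

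Next I show that both barrier sequences converge uniformly to $v_\varepsilon$. For the lower sequence, Steps~3--5 of the proof of Theorem~\ref{thm:exist} apply verbatim with $m$ replaced by $-K$. That argument used only that the sequence is increasing and bounded, dominated convergence, nonexpansiveness, and $\talpha<1$. It yields a uniform limit $\underline v$ with $T\underline v=\underline v$ and $\underline v\in\Feps$. For the upper sequence I rerun the same Step~4 argument with $e_j:=\overline u_j-\overline v\ge 0$ decreasing. The inequality $\sup_Af-\sup_Ag\le\sup_A(f-g)$ is now applied to $\overline u_k-\overline u_\ell$ with $\ell>k$, and the contradiction $M_\infty-2\delta\le\talpha(M_\infty+\delta)+\delta$ follows identically. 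This gives a fixed point $\overline v$ and uniform convergence. By Corollary~\ref{cor:uniqueness}, $\underline v=\overline v=v_\varepsilon$.

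Finally, the sandwich gives
\[
  \|u_j-v_\varepsilon\|_\infty
  \le \max\{\|\underline u_j-v_\varepsilon\|_\infty,\ \|\overline u_j-v_\varepsilon\|_\infty\}
  \to 0.
\]

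The only step needing care is the decreasing version of Step~4. One must check that the pointwise limit of a decreasing bounded Borel sequence is Borel, which it is, and that the signs in the sup/inf difference inequalities still give the bound by $\talpha M_k$ plus the integral term. I expect this to be a routine symmetric rewrite. Alternatively, it can be obtained by applying the increasing case to $-\overline u_j$, since $-T(-u)$ is an operator of the same form: $\Sp$ and $\Sm$ swap under $u\mapsto -u$, and the boundary data becomes $-F$.
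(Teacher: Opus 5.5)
Your sandwich argument follows the paper's strategy, but your choice of barriers differs, and the difference is necessary. The paper uses $\inf_{\Geps}F$ and $\sup_{\Geps}F$ on $\Omega$ and asserts $\underline u_0\le u_0\le\overline u_0$. That fails in general, because the interior values of $u_0$ are unrestricted. Your $\pm K$ with $K=\max\{\|u_0\|_\infty,\|F\|_{L^\infty(\Geps)}\}$ always sandwiches $u_0$, so your version repairs a genuine slip.

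The paper's choice would have allowed citing Theorem~\ref{thm:exist} directly for the lower sequence, since that theorem fixes the initialization $\inf_{\Geps}F$. Yours forces the rerun of Steps~3--5 that you carry out. Your reflection identity $T_{-F}(-u)=-T_Fu$ (with $T_G$ denoting the operator with strip data $G$) handles the decreasing case cleanly, where the paper only says ``the same argument''.

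One caveat applies to the rerun, and it is inherited from the paper's Step~4, so ``verbatim'' is not quite enough. The point $x_0$ is chosen after $k$, since it must satisfy $e_{k+1}(x_0)\ge M_\infty-\delta$. Pointwise dominated convergence at $x_0$ therefore does not by itself give $\tbeta\int_{B_\varepsilon(0)}e_k(x_0+h)\rhoeps(h)\,dh\le\delta$. You need the convergence uniformly in $x$. This holds because $\rhoeps\le 2\varepsilon/|B_\varepsilon^{n+1}|$ and $B_\varepsilon(x)\subset\Oeps$ for $x\in\Omega$, so
\[
  \sup_{x\in\Omega}\int_{B_\varepsilon(0)}e_k(x+h)\rhoeps(h)\,dh
  \le\frac{2\varepsilon}{|B_\varepsilon^{n+1}|}\int_{\Oeps}e_k(y)\,dy\longrightarrow 0
\]
by dominated convergence on the bounded set $\Oeps$. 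Fix $k$ using this bound together with $M_k\le M_\infty+\delta$ first, and only then pick $x_0$ and $\ell$. The same applies to your decreasing sequence, and the contradiction then goes through as you describe.
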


\begin{proof}
Let
\[
  \underline u_0(x):=
  \begin{cases}
    \inf_{\Geps}F,& x\in\Omega,\\
    F(x),& x\in\Geps,
  \end{cases}
  \qquad
  \overline u_0(x):=
  \begin{cases}
    \sup_{\Geps}F,& x\in\Omega,\\
    F(x),& x\in\Geps.
  \end{cases}
\]
Then
\[
  \underline u_0\le u_0\le \overline u_0
  \qquad\text{on }\Oeps.
\]
By monotonicity of $T$,
\[
  T^j\underline u_0\le T^j u_0\le T^j\overline u_0
  \qquad\text{for all }j\ge 0.
\]
The iterates $T^j\underline u_0$ converge uniformly to the unique fixed
point $v_\varepsilon$ by Theorem~\ref{thm:exist}.  Applying the same
argument to the decreasing sequence starting from $\overline u_0$, and
using uniqueness, we conclude that
\[
  T^j\overline u_0\to v_\varepsilon
  \qquad\text{uniformly on }\Oeps.
\]
Hence $T^j u_0\to v_\varepsilon$ uniformly by the squeeze theorem.
\end{proof}

\begin{remark}
At this point the projected dynamic programming principle is fully
well posed on the strip domain: for every bounded Borel strip boundary
datum there exists a unique bounded Borel fixed point of the operator
$T$.  The next step is to identify this fixed point with the value of
the projected tug-of-war game.
\end{remark}
\section{The projected tug-of-war game}
\label{sec:game}

In this section we identify the unique fixed point of the projected
dynamic programming operator with the value of the corresponding game.
As in the standard tug-of-war with noise theory, the proof consists of
three parts: the construction of the probability space, a uniform exit
estimate implying almost-sure termination with finite expected duration,
and a martingale argument based on measurable almost-optimal strategies.

\subsection{The augmented state space and the path measure}

Recall that
\[
  \Geps:=\{x\in\R^n\setminus\Omega:\dist(x,\Omega)\le \varepsilon\},
  \qquad
  \Oeps:=\Omega\cup\Geps.
\]
Because the projected DPP contains the tilted terms
\[
  \Sp[u](x)
  =\sup_{\substack{\tilde x\in\Oeps\\ |\tilde x-x|\le\varepsilon}}
    \Bigl(u(\tilde x)+\sqrt{\varepsilon^2-|\tilde x-x|^2}\Bigr),
\]
\[
  \Sm[u](x)
  =\inf_{\substack{\tilde x\in\Oeps\\ |\tilde x-x|\le\varepsilon}}
    \Bigl(u(\tilde x)-\sqrt{\varepsilon^2-|\tilde x-x|^2}\Bigr),
\]
the correct projected game must carry an additional scalar variable
recording the contribution of the lifted coordinate.  Accordingly, the
state space is
\[
  X:=\Oeps\times\R.
\]
A state is denoted by $(x,s)$, where $x$ is the projected position and
$s$ is the lifted score.

Fix an initial state $(x_0,s_0)\in\Omega\times\R$.  The sample space is
\[
  \Xi:=X^{\N_0},
\]
equipped with the product $\sigma$-algebra
\[
  \mathcal F:=\mathcal B(X)^{\otimes\N_0}.
\]
For $\omega=(\omega_0,\omega_1,\dots)\in\Xi$, write
\[
  \omega_k=(x_k(\omega),s_k(\omega)),
  \qquad k=0,1,2,\dots,
\]
and let
\[
  \mathcal F_k:=\sigma\bigl((x_0,s_0),\dots,(x_k,s_k)\bigr)
\]
be the natural filtration.

A history of length $k$ is a tuple
\[
  h_k=((x_0,s_0),\dots,(x_k,s_k))\in X^{k+1}.
\]

A strategy for Player~I is a Borel measurable map
\[
  S_I:h_k\mapsto \tilde x\in \overline{B_\varepsilon(x_k)}\cap\Oeps,
\]
defined whenever $x_k\in\Omega$.  Similarly, a strategy for Player~II is
a Borel measurable map
\[
  S_{II}:h_k\mapsto \tilde x\in \overline{B_\varepsilon(x_k)}\cap\Oeps.
\]

If the current state is $(x_k,s_k)$ with $x_k\in\Geps$, the game has
already terminated and the process remains there.  If $x_k\in\Omega$,
then one step of the game is determined as follows.

\begin{enumerate}[label=\textup{(\roman*)},leftmargin=2.75em]
  \item With probability $\tbeta$, a noise step occurs: a vector
  $h\in B_\varepsilon(0)$ is sampled according to the density $\rhoeps$,
  and
  \[
    (x_{k+1},s_{k+1})=(x_k+h,s_k).
  \]

  \item With probability $\talpha/2$, Player~I wins the move, chooses
  $\tilde x=S_I(h_k)$, and
  \[
    (x_{k+1},s_{k+1})
    =
    \left(
      \tilde x,\,
      s_k+\sqrt{\varepsilon^2-|\tilde x-x_k|^2}
    \right).
  \]

  \item With probability $\talpha/2$, Player~II wins the move, chooses
  $\tilde x=S_{II}(h_k)$, and
  \[
    (x_{k+1},s_{k+1})
    =
    \left(
      \tilde x,\,
      s_k-\sqrt{\varepsilon^2-|\tilde x-x_k|^2}
    \right).
  \]
\end{enumerate}

Equivalently, given a history
\[
  h_k=((x_0,s_0),\dots,(x_k,s_k))
\]
with $x_k\in\Omega$, the one-step transition kernel is
\begin{align}
  \pi_{S_I,S_{II}}(h_k,A)
  &:=
  \tbeta\int_{B_\varepsilon(0)}
  \mathbf 1_A(x_k+h,s_k)\rhoeps(h)\,dh
  \notag\\
  &\qquad
  +\frac{\talpha}{2}\,
  \delta_{\left(S_I(h_k),\,s_k+\sqrt{\varepsilon^2-|S_I(h_k)-x_k|^2}\right)}(A)
  \notag\\
  &\qquad
  +\frac{\talpha}{2}\,
  \delta_{\left(S_{II}(h_k),\,s_k-\sqrt{\varepsilon^2-|S_{II}(h_k)-x_k|^2}\right)}(A),
  \label{eq:augmented-kernel}
\end{align}
for Borel sets $A\subset X$.

If $x_k\in\Geps$, we set
\begin{equation}\label{eq:absorbing-kernel}
  \pi_{S_I,S_{II}}(h_k,A):=\delta_{(x_k,s_k)}(A),
\end{equation}
so that the strip is absorbing.

Since the strategies are Borel measurable and $\rhoeps$ is a Borel
density, the map
\[
  h_k\longmapsto \pi_{S_I,S_{II}}(h_k,A)
\]
is Borel measurable for every Borel set $A\subset X$.  Therefore, by
the standard path-space construction for measurable transition kernels \cite[p~100]{IonescuTulcea69},
there exists a unique probability measure
\[
  \mathbb P^{(x_0,s_0)}_{S_I,S_{II}}
\]
on $(\Xi,\mathcal F)$ under which the process starts from $(x_0,s_0)$
and has transition law
\eqref{eq:augmented-kernel}--\eqref{eq:absorbing-kernel}.  Expectation
with respect to this measure will be denoted by
\[
  \mathbb E^{(x_0,s_0)}_{S_I,S_{II}}.
\]

\subsection{Stopping time and payoff}

The stopping time is the first entrance of the projected position into
the strip:
\begin{equation}\label{eq:tau}
  \tau:=\inf\{k\ge 0:x_k\in\Geps\}.
\end{equation}
Since $\Geps$ is Borel, $\tau$ is an $(\mathcal F_k)$-stopping time.

Let $F:\Geps\to\R$ be the prescribed bounded Borel strip boundary datum.
The terminal payoff is
\[
  F(x_\tau)+s_\tau.
\]
Thus Player~I seeks to maximize the expected payoff and Player~II seeks
to minimize it.

The lower and upper values of the game are defined by
\begin{align}
  u_I(x_0,s_0)
  &:=
  \sup_{S_I}\inf_{S_{II}}
  \mathbb E^{(x_0,s_0)}_{S_I,S_{II}}\!\bigl[F(x_\tau)+s_\tau\bigr],
  \label{eq:uI}\\
  u_{II}(x_0,s_0)
  &:=
  \inf_{S_{II}}\sup_{S_I}
  \mathbb E^{(x_0,s_0)}_{S_I,S_{II}}\!\bigl[F(x_\tau)+s_\tau\bigr].
  \label{eq:uII}
\end{align}
Clearly
\[
  u_I(x_0,s_0)\le u_{II}(x_0,s_0).
\]

\subsection{A uniform finite-block exit estimate}

We next prove that the game terminates almost surely and has finite
expected duration.  The argument uses only the boundedness of $\Omega$,
the positivity of the noise probability $\tbeta$, and the fact that
$\rhoeps$ is strictly positive on the whole ball $B_\varepsilon(0)$.

\begin{lemma}[Uniform finite-block exit probability]
\label{lem:exit}
There exist an integer $N\ge 1$ and a constant $\delta>0$, depending
only on $\Omega$, $\varepsilon$, and $\rhoeps$, such that for every
starting state $(x_0,s_0)\in\Omega\times\R$ and every pair of
strategies,
\[
  \mathbb P^{(x_0,s_0)}_{S_I,S_{II}}(\tau\le N)\ge \delta.
\]
\end{lemma}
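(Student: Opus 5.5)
Since $\Omega$ is bounded, the plan is to fix a unit vector $e_1$ and push the position out in direction $e_1$ using only noise steps that ignore the players. Set $R:=\operatorname{diam}(\Omega)$ and consider the cone-like target set
\[
  A:=\Bigl\{h\in B_\varepsilon(0):\ h\cdot e_1>\tfrac{\varepsilon}{2}\Bigr\},
\]
which has positive Lebesgue measure. By Lemma~\ref{lem:density}, $\rhoeps>0$ on $B_\varepsilon(0)$, so
\[
  \eta:=\int_A\rhoeps(h)\,dh>0,
\]
and $\eta$ depends only on $\varepsilon$ and $n$. Choose $N:=\lceil 2R/\varepsilon\rceil+1$ and $\delta:=(\tbeta\eta)^N$; note $\tbeta=(n+3)/(p+n+1)>0$.

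The key step is to show that on the event
\[
  E:=\{\text{the first $N$ steps are all noise steps with increments in }A\}
\]
we have $\tau\le N$. If the game has not stopped before step $k$, then $x_{k+1}=x_k+h_{k}$ with $h_k\cdot e_1>\varepsilon/2$. So if $\tau>N$, then $x_N\cdot e_1>x_0\cdot e_1+N\varepsilon/2>x_0\cdot e_1+R$, which is impossible for $x_N\in\Omega$. Each increment has length less than $\varepsilon$, so the first position leaving $\Omega$ lies within distance $\varepsilon$ of the previous point of $\Omega$ and hence in $\Geps$; thus $\tau\le N$ on $E$. Once $x_k\in\Geps$ the kernel \eqref{eq:absorbing-kernel} freezes the process, so later steps are irrelevant.

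It remains to bound $\mathbb P^{(x_0,s_0)}_{S_I,S_{II}}(E)$ from below uniformly in the strategies. By \eqref{eq:augmented-kernel}, for any history $h_k$ with $x_k\in\Omega$, the noise part alone gives
\[
  \pi_{S_I,S_{II}}\bigl(h_k,\{(x_k+h,s_k):h\in A\}\bigr)\ge\tbeta\eta,
\]
whatever $S_I(h_k)$ and $S_{II}(h_k)$ are. The Dirac parts only add nonnegative mass, though they may also land in this set. To avoid ambiguity, I would define $E$ as the event that for each $k<\min(\tau,N)$ the transition is into $x_k+A$; then Dirac moves that happen to land there only help. Conditioning successively on $\mathcal F_k$, which is legitimate by the Ionescu--Tulcea construction, gives
\[
  \mathbb P(E)\ge(\tbeta\eta)^N=\delta.
\]

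The main subtlety is this step-by-step conditioning with the stopping truncation. The $s$-coordinate plays no role, so the estimate is uniform in $s_0$ and in $x_0\in\Omega$.
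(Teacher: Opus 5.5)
Your proof is correct and follows the paper's own route. Noise increments in a fixed half-ball cap of positive $\rhoeps$-mass push the position across the bounded set $\Omega$ within $N$ steps, which gives $\delta=(\tbeta c)^N$; you measure the crossing with $\operatorname{diam}\Omega$ where the paper uses the width $b-a$ in direction $\nu$, which makes no difference. Your event $E$, requiring each increment up to step $\min(\tau,N)$ to lie in $x_k+A$ whoever moves, is slightly cleaner than the paper's ``every step is a noise step,'' because it is a genuine event on the path space $X^{\N_0}$ and it handles absorption before step $N$ explicitly.
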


\begin{proof}
Fix a unit vector $\nu\in S^{n-1}$.  Since $\Omega$ is bounded, the
projection $x\mapsto x\cdot \nu$ is bounded on $\Omega$.  Set
\[
  a:=\inf_{x\in\Omega}x\cdot \nu,
  \qquad
  b:=\sup_{x\in\Omega}x\cdot \nu.
\]
Choose an integer $N\ge 1$ so large that
\[
  \frac{N\varepsilon}{2}>b-a.
\]

Define the fixed cap
\[
  C_\nu:=\{h\in B_\varepsilon(0): h\cdot \nu\ge \varepsilon/2\}.
\]
Since $\rhoeps$ is strictly positive on $B_\varepsilon(0)$,
\[
  c_\nu:=\int_{C_\nu}\rhoeps(h)\,dh>0.
\]

Consider the event $E$ that during the first $N$ steps
\begin{enumerate}[label=\textup{(\alph*)},leftmargin=2.5em]
  \item every step is a noise step, and
  \item every noise increment belongs to $C_\nu$.
\end{enumerate}
At each step, conditional on the past, the probability of this is
$\tbeta c_\nu$.  Hence
\begin{equation}\label{eq:event-E}
  \mathbb P^{(x_0,s_0)}_{S_I,S_{II}}(E)\ge (\tbeta c_\nu)^N.
\end{equation}

On the event $E$, the projected position satisfies
\[
  x_k=x_{k-1}+h_k,
  \qquad h_k\cdot \nu\ge \varepsilon/2
  \quad (k=1,\dots,N).
\]
Therefore
\[
  x_N\cdot \nu
  =
  x_0\cdot \nu+\sum_{k=1}^N h_k\cdot \nu
  \ge
  a+\frac{N\varepsilon}{2}
  >
  b,
\]
so $x_N\notin\Omega$.  Let $k^*:=\min\{k\ge 1: x_k\notin\Omega\}$ be
the first index at which the projected position leaves $\Omega$; on
$E$ we have $k^*\le N$.  Since step $k^*$ is a noise step on $E$, we
have $x_{k^*}=x_{k^*-1}+h_{k^*}$ with $x_{k^*-1}\in\Omega$ and
$|h_{k^*}|<\varepsilon$ (as $h_{k^*}\in B_\varepsilon(0)$).
Therefore
\[
  \dist(x_{k^*},\Omega)\le |x_{k^*}-x_{k^*-1}|=|h_{k^*}|<\varepsilon,
\]
and since $x_{k^*}\notin\Omega$, the definition of $\Geps$ gives
$x_{k^*}\in\Geps$.  Hence $\tau\le k^*\le N$ on $E$.

Combining this with \eqref{eq:event-E}, we obtain
\[
  \mathbb P^{(x_0,s_0)}_{S_I,S_{II}}(\tau\le N)\ge (\tbeta c_\nu)^N.
\]
Thus the conclusion holds with
\[
  \delta:=(\tbeta c_\nu)^N>0.\qedhere
\]
\end{proof}
\begin{proposition}[Almost sure termination and finite expected duration]
\label{prop:exit-as}
For every starting state $(x_0,s_0)\in\Omega\times\R$ and every pair of
strategies $(S_I,S_{II})$,
\[
  \mathbb P^{(x_0,s_0)}_{S_I,S_{II}}(\tau<\infty)=1
\]
and
\[
  \mathbb E^{(x_0,s_0)}_{S_I,S_{II}}[\tau]\le \frac{N}{\delta}<\infty,
\]
where $N$ and $\delta$ are as in Lemma~\ref{lem:exit}.
\end{proposition}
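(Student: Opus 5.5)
The plan is to iterate the uniform bound of Lemma~\ref{lem:exit} over consecutive blocks of $N$ steps using the Markov property of the path measure, obtaining a geometric tail estimate
\[
  \mathbb P^{(x_0,s_0)}_{S_I,S_{II}}(\tau> jN)\le (1-\delta)^j,\qquad j\ge 0,
\]
from which both claims follow at once.

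The key step is the induction on $j$. Suppose the bound holds for $j$. Write
\[
  \mathbb P(\tau>(j+1)N)=\mathbb E\bigl[\mathbf 1_{\{\tau>jN\}}\,\mathbb P(\tau>(j+1)N\mid\mathcal F_{jN})\bigr].
\]
On the event $\{\tau>jN\}$ the current position $x_{jN}$ lies in $\Omega$. The conditional law of the continuation $(x_{jN+k},s_{jN+k})_{k\ge0}$ given $\mathcal F_{jN}$ is the path measure started at $(x_{jN},s_{jN})$. Its strategies are the shifted ones $h\mapsto S_I(h_{jN}\ast h)$ and $h\mapsto S_{II}(h_{jN}\ast h)$, where $\ast$ denotes concatenation of histories with the fixed prefix $h_{jN}$. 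These shifted maps are again Borel, so they are admissible strategies. This conditional identification is the standard regular-conditional-probability property of the Ionescu-Tulcea construction.

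Lemma~\ref{lem:exit} is uniform over all starting states and all strategy pairs. Hence the conditional probability of not exiting in the next $N$ steps is at most $1-\delta$ on $\{\tau>jN\}$, and therefore $\mathbb P(\tau>(j+1)N)\le(1-\delta)\,\mathbb P(\tau>jN)$.

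I expect this conditioning step to be the main obstacle. If one wants to avoid invoking regular conditional laws abstractly, a cleaner route is to re-run the argument of Lemma~\ref{lem:exit} directly on the block $jN+1,\dots,(j+1)N$. Let $E_j$ be the event that every step in this block is a noise step with increment in $C_\nu$. By the form \eqref{eq:augmented-kernel} of the kernel, each step, conditionally on $\mathcal F_{k-1}$ and on $\{x_{k-1}\in\Omega\}$, has probability at least $\tbeta c_\nu$ of being such a step, whatever the history. Chaining these conditional bounds gives $\mathbb P(E_j\cap\{\tau>jN\}\mid\mathcal F_{jN})\ge(\tbeta c_\nu)^N\mathbf 1_{\{\tau>jN\}}$. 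The deterministic part of that proof then shows $\tau\le(j+1)N$ on $E_j\cap\{\tau>jN\}$. This uses only the one-step kernel and conditional expectations.

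Both conclusions then follow from the tail bound. Letting $j\to\infty$ gives $\mathbb P(\tau=\infty)\le\lim_j(1-\delta)^j=0$. For the expectation,
\[
  \mathbb E[\tau]=\sum_{k\ge0}\mathbb P(\tau>k)\le\sum_{j\ge0}N\,\mathbb P(\tau>jN)\le N\sum_{j\ge0}(1-\delta)^j=\frac N\delta,
\]
where the middle inequality groups the indices $k$ into blocks $jN\le k<(j+1)N$ and uses that $\mathbb P(\tau>k)$ is nonincreasing in $k$.
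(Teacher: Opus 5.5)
Your proposal is correct and takes essentially the same route as the paper: you iterate the uniform block bound of Lemma~\ref{lem:exit} to get $\mathbb P(\tau>jN)\le(1-\delta)^j$, then conclude with the tail-sum formula grouped into blocks of length $N$. Your justification of the conditioning step, via the Ionescu--Tulcea conditional law with shifted strategies, is exactly what underlies the paper's one-line claim $\mathbb P(\tau>(m+1)N\mid\tau>mN)\le 1-\delta$; in your alternative block argument, impose the noise-in-$C_\nu$ requirement only on steps taken from a position in $\Omega$, because otherwise the absorbing kernel makes $E_j$ incompatible with an exit before the end of the block.
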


\begin{proof}
By Lemma~\ref{lem:exit}, for every $m\ge 0$,
\[
  \mathbb P(\tau>(m+1)N\mid \tau>mN)\le 1-\delta.
\]
Hence
\[
  \mathbb P(\tau>mN)\le (1-\delta)^m.
\]
Letting $m\to\infty$ gives
\[
  \mathbb P(\tau<\infty)=1.
\]

For the expectation, use the tail-sum formula:
\[
  \mathbb E[\tau]=\sum_{k=0}^\infty \mathbb P(\tau>k).
\]
Grouping the sum into blocks of length $N$ yields
\[
  \mathbb E[\tau]
  \le
  N\sum_{m=0}^\infty \mathbb P(\tau>mN)
  \le
  N\sum_{m=0}^\infty (1-\delta)^m
  =
  \frac{N}{\delta}.
\]
\end{proof}

\subsection{Measurable almost-optimizing selectors}

The following measurable selection lemma is the analogue of the standard
near-optimizer construction in the tug-of-war with noise literature.

\begin{lemma}[Measurable almost-optimizers]
\label{lem:selector}
Let $u:\Oeps\to\R$ be bounded and Borel, and let $\eta>0$.  Then there
exist Borel measurable maps
\[
  S^+,S^-:\Omega\to\Oeps
\]
such that $|S^\pm(x)-x|\le\varepsilon$ and
\begin{align}
  u(S^+(x))
  +\sqrt{\varepsilon^2-|S^+(x)-x|^2}
  &\ge \Sp[u](x)-\eta,
  \label{eq:selector-plus}\\
  u(S^-(x))
  -\sqrt{\varepsilon^2-|S^-(x)-x|^2}
  &\le \Sm[u](x)+\eta
  \label{eq:selector-minus}
\end{align}
for every $x\in\Omega$.
\end{lemma}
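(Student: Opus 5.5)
The plan is to avoid abstract measurable selection theorems and build $S^\pm$ from a countable dense family of candidate points, using a piecewise-constant (in $x$) choice rule. We treat $S^+$; then $S^-$ follows by applying the same construction to $-u$, since $\Sm[u]=-\Sp[-u]$.

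\textbf{Step 1: a countable family of candidates.} The difficulty is that $u$ is merely Borel, so the map $\tilde x\mapsto u(\tilde x)+\sqrt{\varepsilon^2-|\tilde x-x|^2}$ is not continuous in $\tilde x$. A dense countable subset of $\Oeps$ therefore need not nearly attain the supremum. Instead we use continuity in $x$ with $\tilde x$ fixed. For each fixed $\tilde x\in\Oeps$, the function
\[
  \phi_{\tilde x}(x):=u(\tilde x)+\sqrt{\varepsilon^2-|\tilde x-x|^2}
\]
is continuous on $\overline{B_\varepsilon(\tilde x)}$. By Lemma~\ref{lem:tilt-meas} and its proof, each superlevel set $\{x\in\Omega:\Sp[u](x)>\lambda\}$ is a union of balls $B_{\mu(\tilde x)}(\tilde x)\cap\Omega$ indexed by candidate points $\tilde x$. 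On any such ball, $\phi_{\tilde x}>\lambda$.

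\textbf{Step 2: a countable cover by good candidates.} Take the rational levels $\lambda\in\eta\mathbb Z$. Since $\R^n$ is second countable (Lindelöf), each open set $E_\lambda$ is covered by countably many of these balls, say $B_{\mu(\tilde x_{\lambda,i})}(\tilde x_{\lambda,i})$ for $i\in\N$, each attached to a candidate $\tilde x_{\lambda,i}$ with $\phi_{\tilde x_{\lambda,i}}>\lambda$ on its ball. Enumerate all pairs $(\lambda,i)$, i.e.\ the countably many (ball, candidate) pairs.

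\textbf{Step 3: the selection rule.} Set $s(x):=\Sp[u](x)$, which is Borel by Lemma~\ref{lem:tilt-meas}. For $x\in\Omega$, let $\lambda(x)$ be the largest element of $\eta\mathbb Z$ strictly below $s(x)$, so that $\lambda(x)\ge s(x)-\eta$. Then $x\in E_{\lambda(x)}$, so $x$ lies in some ball $B_{\mu(\tilde x_{\lambda(x),i})}(\tilde x_{\lambda(x),i})$. Let $i(x)$ be the least such index and define
\[
  S^+(x):=\tilde x_{\lambda(x),i(x)}.
\]
Then $|S^+(x)-x|<\varepsilon$ and $\phi_{S^+(x)}(x)>\lambda(x)\ge s(x)-\eta$, which is exactly \eqref{eq:selector-plus}.

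\textbf{Step 4: Borel measurability.} The map $x\mapsto\lambda(x)$ takes countably many values, on the Borel sets $\{k\eta<s\le(k+1)\eta\}$. Given $\lambda$, the index $i(x)$ is determined by membership of $x$ in countably many open balls (first hit). Hence $S^+$ is a countably-valued map whose level sets are Borel, so it is Borel measurable.

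The main obstacle is Step 2: justifying that countably many candidates suffice despite $u$ being only Borel. The case analysis in Lemma~\ref{lem:tilt-meas} supplies exactly the open-ball structure that makes the Lindelöf argument work, and that is the key point to check carefully, including the boundary-level cases where $u(\tilde x)>\lambda$ and where $\lambda-\varepsilon<u(\tilde x)\le\lambda$.
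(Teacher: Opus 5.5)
Your Steps~2--3 rely on a claim imported from the proof of Lemma~\ref{lem:tilt-meas}: that $E_\lambda=\{x\in\Omega:\Sp[u](x)>\lambda\}$ is exactly the union of the open balls $B_{\mu(\tilde x)}(\tilde x)\cap\Omega$. For $\Sp$ as defined this claim is false, because the admissible set is the closed ball $|\tilde x-x|\le\varepsilon$. A candidate with $u(\tilde x)>\lambda$ witnesses $\Sp[u](x)>\lambda$ on all of $\overline{B_\varepsilon(\tilde x)}\cap\Omega$. Points at distance exactly $\varepsilon$ from it therefore lie in $E_\lambda$ but possibly in no ball of your cover, and the assertion ``$x$ lies in some ball'' in Step~3 fails. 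This happens precisely in the case $u(\tilde x)>\lambda$ that you flagged.

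Concretely, let $n\ge2$, suppose $\Omega$ contains $Z:=\{te_1:0\le t\le 1\}$ and the points $x_t:=te_1+\varepsilon e_2$ for $0<t<1$, and take $u=K\mathbf{1}_Z$ with $K>\varepsilon+\eta$.
\begin{itemize}
\item The only point of $Z$ in $\overline{B_\varepsilon(x_t)}$ is $te_1$, at distance exactly $\varepsilon$. Hence $\Sp[u](x_t)=K$ and $\lambda(x_t)\ge K-\eta>\varepsilon$.
\item The only candidates with $u(\tilde x)>\lambda(x_t)-\varepsilon$ are points of $Z$, and their balls $B_\varepsilon(\tilde x)$ all miss $x_t$. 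So $i(x_t)$ is undefined.
\item Worse, any admissible $y$ with $u(y)+\sqrt{\varepsilon^2-|y-x_t|^2}\ge K-\eta>\varepsilon$ must equal $te_1$. Every map satisfying \eqref{eq:selector-plus} therefore takes uncountably many values, so no countably-valued construction like yours can prove the lemma as stated.
\end{itemize}
Step~4's appeal to Lemma~\ref{lem:tilt-meas} for Borel measurability of $\Sp[u]$ rests on the same false openness claim. In this example, $E_\lambda=\{\dist(\cdot,Z)\le\varepsilon\}\cap\Omega$ for $\varepsilon<\lambda<K$, which is not open.

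In fairness, the paper's own proof breaks at the same points. Its candidate set $S$ is countable and $A_\eta$ requires $|y-x|<\varepsilon$, so the section of $A_\eta$ over $x_t$ is empty.

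The natural repair is to define $\Sp,\Sm$ with the strict constraint $|\tilde x-x|<\varepsilon$. This is in fact what projecting the open ball $B^{n+1}_\varepsilon(x,s)$ in Proposition~\ref{prop:dpp-intro} yields, since the fiber over $\tilde x$ is empty when $|\tilde x-x|=\varepsilon$. With that definition each $E_\lambda$ really is the union of your witness balls, $\Sp[u]$ is lower semicontinuous, and your argument goes through verbatim.

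It is then a genuinely different route from the paper's:
\begin{itemize}
\item The paper fixes one countable set of $\eta/2$-near-maximizers of $u$ on rational balls. It then picks the first one that is $\eta$-optimal at $x$, using continuity of the tilt in $\tilde x$.
\item You instead quantize the value $\Sp[u](x)$ to the grid $\eta\mathbb{Z}$. Level by level, you take Lindel\"of subcovers of the open superlevel sets by balls on which a single fixed candidate is a witness, using continuity in $x$.
\end{itemize}
Your version avoids the approximation step and makes the Borel measurability of the selector transparent.
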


\begin{proof}
We prove the claim for $S^+$; the proof for $S^-$ is identical.

Let $\mathcal B$ be the countable family of balls in $\Oeps$ with
rational centers and rational radii.  For each $B\in\mathcal B$, choose
a point $x_B\in B$ such that
\[
  u(x_B)\ge \sup_{y\in B}u(y)-\frac{\eta}{2}.
\]
Set
\[
  S:=\{x_B:B\in\mathcal B\},
\]
which is countable.

Fix $x\in\Omega$.  Approximating the admissible set
$\overline{B_\varepsilon(x)}\cap\Oeps$ by basis balls gives
\[
  \Sp[u](x)
  \le
  \sup_{\substack{y\in S\\ |y-x|<\varepsilon}}
  \Bigl(u(y)+\sqrt{\varepsilon^2-|y-x|^2}\Bigr)+\frac{\eta}{2}.
\]
Hence the set
\[
  A_\eta
  :=
  \left\{
    (x,y)\in\Omega\times S:
    |y-x|<\varepsilon,\
    u(y)+\sqrt{\varepsilon^2-|y-x|^2}>\Sp[u](x)-\eta
  \right\}
\]
has nonempty vertical sections.

Enumerate $S=\{s_1,s_2,\dots\}$.  Define $S^+(x)$ to be the first
$s_j$ such that $(x,s_j)\in A_\eta$.  Since $S$ is countable and
$A_\eta$ is Borel, the map $S^+$ is Borel measurable and satisfies
\eqref{eq:selector-plus}.  The proof for $S^-$ is the same.
\end{proof}

\subsection{The game has value}

We now identify the value of the projected game with the unique fixed
point of the projected dynamic programming operator.

\begin{theorem}[Game value]
\label{thm:value}
Let $v_\varepsilon\in\Feps$ be the unique solution of
\[
  \begin{cases}
    v_\varepsilon(x)
    =
    \dfrac{\talpha}{2}\bigl(\Sp[v_\varepsilon](x)+\Sm[v_\varepsilon](x)\bigr)
    +\tbeta\displaystyle\int_{B_\varepsilon(0)}
      v_\varepsilon(x+h)\rhoeps(h)\,dh,
    & x\in\Omega,\\[1.2ex]
    v_\varepsilon(x)=F(x),& x\in\Geps.
  \end{cases}
\]
Then for every starting state $(x_0,s_0)\in\Omega\times\R$,
\[
  u_I(x_0,s_0)=v_\varepsilon(x_0)+s_0=u_{II}(x_0,s_0).
\]
In particular,
\[
  u_I(x_0,0)=v_\varepsilon(x_0)=u_{II}(x_0,0).
\]
\end{theorem}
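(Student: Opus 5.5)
The plan is the standard martingale comparison from the tug-of-war with noise theory, run on the augmented state $(x_k,s_k)$. Set $W(x,s):=v_\varepsilon(x)+s$ on $X$. The key observation is that the fixed-point identity $Tv_\varepsilon=v_\varepsilon$ translates exactly into a one-step identity for $W$ under the kernel \eqref{eq:augmented-kernel}. This is the computation of Proposition~\ref{prop:dpp-intro} read backwards: the noise step keeps $s$ fixed and averages $v_\varepsilon$ against $\rhoeps$, while a strategic move to $\tilde x$ adds $\pm\sqrt{\varepsilon^2-|\tilde x-x_k|^2}$ to $s$. Since $\talpha+\tbeta=1$, the $s_k$ contributions recombine to $s_k$.

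I will prove $u_{II}\le W(x_0,s_0)$ and $u_I\ge W(x_0,s_0)$. Together with $u_I\le u_{II}$ this gives the theorem.

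\textbf{Upper bound.} Fix $\eta>0$ and let Player~II use the selector $S^-$ of Lemma~\ref{lem:selector} for $u=v_\varepsilon$, applied at the current position $x_k$. This is a Borel strategy. Player~I's strategy $S_I$ is arbitrary. Consider
\[
M_k:=v_\varepsilon(x_k)+s_k-\eta k .
\]
On $\{x_k\in\Omega\}$ the conditional expectation $\mathbb E[v_\varepsilon(x_{k+1})+s_{k+1}\mid\mathcal F_k]$ is
\[
s_k+\tfrac{\talpha}{2}\Bigl(v_\varepsilon(\tilde x_I)+\sqrt{\cdot}\Bigr)+\tfrac{\talpha}{2}\Bigl(v_\varepsilon(S^-(x_k))-\sqrt{\cdot}\Bigr)+\tbeta\!\int v_\varepsilon(x_k+h)\rhoeps(h)\,dh .
\]
It is therefore at most
\[
s_k+\tfrac{\talpha}{2}\bigl(\Sp[v_\varepsilon](x_k)+\Sm[v_\varepsilon](x_k)+\eta\bigr)+\tbeta\!\int v_\varepsilon(x_k+h)\rhoeps(h)\,dh\le v_\varepsilon(x_k)+s_k+\eta .
\]
Hence $M_k$ is a supermartingale up to $\tau$.

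\textbf{Optional stopping.} The process $s_k$ is unbounded, but $|s_{k\wedge\tau}-s_0|\le\varepsilon\tau$ and $v_\varepsilon$ is bounded. So $|M_{k\wedge\tau}|\le C+(\varepsilon+\eta)\tau$, which is integrable by Proposition~\ref{prop:exit-as}. Dominated convergence then justifies passing $k\to\infty$ in $\mathbb E[M_{\tau\wedge k}]\le M_0$, giving
\[
\mathbb E[F(x_\tau)+s_\tau]\le v_\varepsilon(x_0)+s_0+\eta\,\mathbb E[\tau]\le W(x_0,s_0)+\eta N/\delta ,
\]
where $v_\varepsilon(x_\tau)=F(x_\tau)$ because $x_\tau\in\Geps$. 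The bound $\mathbb E[\tau]\le N/\delta$ is uniform over strategies, so taking $\sup_{S_I}$, then $\inf_{S_{II}}$, and letting $\eta\to0$ yields $u_{II}\le W$.

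\textbf{Lower bound.} This is symmetric: Player~I uses $S^+$, and $v_\varepsilon(x_k)+s_k+\eta k$ is a submartingale.

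\textbf{Main obstacle.} The delicate step is the integrability and optional stopping, since the payoff involves the unbounded score $s_\tau$. The whole argument rests on the uniform bound $\mathbb E[\tau]\le N/\delta$, independent of strategies, from Proposition~\ref{prop:exit-as}; without it the $\eta$-error could not be made small uniformly. A secondary point is checking that the selector strategies, which depend only on $x_k$, are admissible Borel maps of histories and keep the kernel measurable. This follows from Lemma~\ref{lem:selector}. The final claim is the special case $s_0=0$.
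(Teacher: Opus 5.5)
Your proposal is correct and is essentially the paper's argument: the same supermartingale/submartingale comparison on the augmented state $(x_k,s_k)$, using the selectors of Lemma~\ref{lem:selector}, with optional stopping justified by domination through $\mathbb E[\tau]<\infty$. The one difference is error bookkeeping. You use a fixed accuracy $\eta$ with the drift correction $-\eta k$, so you need the strategy-uniform bound $\mathbb E[\tau]\le N/\delta$. The paper instead uses step-dependent accuracies $\eta 2^{-k-1}$ with $M_k=v_\varepsilon(x_k)+s_k+\eta 2^{-k}$, which gives error $2\eta$ and needs only $\mathbb E[\tau]<\infty$, already required to dominate the unbounded score. So your remark that the whole argument rests on the uniform bound holds only for your choice of bookkeeping.
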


\begin{proof}
Since always $u_I\le u_{II}$, it suffices to prove
\[
  u_{II}(x_0,s_0)\le v_\varepsilon(x_0)+s_0
  \qquad\text{and}\qquad
  u_I(x_0,s_0)\ge v_\varepsilon(x_0)+s_0.
\]

\medskip
\noindent\textit{Upper bound for $u_{II}$.}
Fix $\eta>0$.  For each $k\ge 0$, let Player~II use the measurable
selector from Lemma~\ref{lem:selector} applied to $u=v_\varepsilon$ with
accuracy $\eta 2^{-k-1}$.  Thus whenever $x_k\in\Omega$,
\[
  x_{k+1}=S_k^-(x_k)
\]
is chosen so that
\begin{equation}\label{eq:Sm-choice}
  v_\varepsilon(x_{k+1})
  -\sqrt{\varepsilon^2-|x_{k+1}-x_k|^2}
  \le
  \Sm[v_\varepsilon](x_k)+\eta 2^{-k-1}.
\end{equation}
Denote this strategy by $S_{II}^\eta$.

Define
\[
  M_k:=v_\varepsilon(x_k)+s_k+\eta 2^{-k},
  \qquad k\ge 0.
\]
We claim that $(M_{k\wedge\tau})_{k\ge 0}$ is a supermartingale under
$\mathbb P^{(x_0,s_0)}_{S_I,S_{II}^\eta}$ for every strategy $S_I$ of
Player~I.

On the event $\{k<\tau\}$ we have $x_k\in\Omega$.  Conditioning on
$\mathcal F_k$ and splitting according to the three possible move types,
we obtain
\begin{align*}
  \mathbb E[M_{k+1}\mid\mathcal F_k]
  &=
  \frac{\talpha}{2}\,
  \mathbb E[M_{k+1}\mid\mathcal F_k,\text{I wins}]
  +\frac{\talpha}{2}\,
  \mathbb E[M_{k+1}\mid\mathcal F_k,\text{II wins}]\\
  &\qquad
  +\tbeta\,
  \mathbb E[M_{k+1}\mid\mathcal F_k,\text{noise}].
\end{align*}

If Player~I wins, then
\[
  v_\varepsilon(x_{k+1})+s_{k+1}
  =
  v_\varepsilon(x_{k+1})
  +s_k+\sqrt{\varepsilon^2-|x_{k+1}-x_k|^2}
  \le
  s_k+\Sp[v_\varepsilon](x_k).
\]

If Player~II wins, then \eqref{eq:Sm-choice} gives
\[
  v_\varepsilon(x_{k+1})+s_{k+1}
  =
  v_\varepsilon(x_{k+1})
  +s_k-\sqrt{\varepsilon^2-|x_{k+1}-x_k|^2}
  \le
  s_k+\Sm[v_\varepsilon](x_k)+\eta 2^{-k-1}.
\]

If a noise step occurs, then $s_{k+1}=s_k$ and
\[
  \mathbb E[v_\varepsilon(x_{k+1})+s_{k+1}\mid \mathcal F_k,\text{noise}]
  =
  s_k+\int_{B_\varepsilon(0)}v_\varepsilon(x_k+h)\rhoeps(h)\,dh.
\]

Combining the three cases and using the fixed-point identity for
$v_\varepsilon$, we get
\begin{align*}
  \mathbb E[v_\varepsilon(x_{k+1})+s_{k+1}\mid\mathcal F_k]
  &\le
  s_k
  +\frac{\talpha}{2}\Sp[v_\varepsilon](x_k)
  +\frac{\talpha}{2}\Bigl(\Sm[v_\varepsilon](x_k)+\eta 2^{-k-1}\Bigr)\\
  &\qquad
  +\tbeta\int_{B_\varepsilon(0)}v_\varepsilon(x_k+h)\rhoeps(h)\,dh\\
  &=
  s_k+v_\varepsilon(x_k)+\frac{\talpha}{2}\eta 2^{-k-1}.
\end{align*}
Therefore
\begin{align*}
  \mathbb E[M_{k+1}\mid\mathcal F_k]
  &\le
  v_\varepsilon(x_k)+s_k+\frac{\talpha}{2}\eta 2^{-k-1}+\eta 2^{-k-1}\\
  &\le
  v_\varepsilon(x_k)+s_k+\eta 2^{-k}\\
  &=M_k,
\end{align*}
because $\talpha\le 1$.  Thus $(M_{k\wedge\tau})$ is a supermartingale.

Moreover,
\[
  |s_{k\wedge\tau}|
  \le |s_0|+\varepsilon(k\wedge\tau),
\]
so
\[
  |M_{k\wedge\tau}|
  \le
  \|v_\varepsilon\|_\infty+|s_0|+\varepsilon(k\wedge\tau)+\eta.
\]
Since $k\wedge\tau\le\tau$ and $\mathbb{E}[\tau]<\infty$, the random variable $\varepsilon\tau$ is integrable, so $|M_{k\wedge\tau}|$ is dominated by the integrable variable $|v_\varepsilon|_\infty+|s_0|+\varepsilon\tau+\eta$; uniform integrability follows.
Hence optional stopping applies and yields
\[
  \mathbb E^{(x_0,s_0)}_{S_I,S_{II}^\eta}[M_\tau]
  \le
  M_0
  =
  v_\varepsilon(x_0)+s_0+\eta.
\]

At the stopping time, $x_\tau\in\Geps$ and $v_\varepsilon(x_\tau)=F(x_\tau)$,
so
\[
  M_\tau=F(x_\tau)+s_\tau+\eta 2^{-\tau}\le F(x_\tau)+s_\tau+\eta.
\]
Therefore
\[
  \mathbb E^{(x_0,s_0)}_{S_I,S_{II}^\eta}[F(x_\tau)+s_\tau]
  \le
  v_\varepsilon(x_0)+s_0+2\eta.
\]
Taking the supremum over $S_I$ and then the infimum over $S_{II}$ gives
\[
  u_{II}(x_0,s_0)\le v_\varepsilon(x_0)+s_0+2\eta.
\]
Letting $\eta\to 0$, we conclude that
\[
  u_{II}(x_0,s_0)\le v_\varepsilon(x_0)+s_0.
\]

\medskip
\noindent\textit{Lower bound for $u_I$.}
The proof is symmetric.  Fix $\eta>0$ and let Player~I use measurable
selectors $S_k^+$ satisfying
\[
  v_\varepsilon(x_{k+1})
  +\sqrt{\varepsilon^2-|x_{k+1}-x_k|^2}
  \ge
  \Sp[v_\varepsilon](x_k)-\eta 2^{-k-1}.
\]
Define
\[
  N_k:=v_\varepsilon(x_k)+s_k-\eta 2^{-k}.
\]
Then $(N_{k\wedge\tau})$ is a submartingale, and the same optional
stopping argument gives
\[
  \mathbb E^{(x_0,s_0)}_{S_I^\eta,S_{II}}[F(x_\tau)+s_\tau]
  \ge
  v_\varepsilon(x_0)+s_0-2\eta
\]
for every strategy $S_{II}$.  Taking the infimum over $S_{II}$ and then
the supremum over $S_I$ yields
\[
  u_I(x_0,s_0)\ge v_\varepsilon(x_0)+s_0-2\eta.
\]
Letting $\eta\to 0$, we obtain
\[
  u_I(x_0,s_0)\ge v_\varepsilon(x_0)+s_0.
\]

Combining the two inequalities with the trivial inequality
$u_I\le u_{II}$, we conclude that
\[
  u_I(x_0,s_0)=v_\varepsilon(x_0)+s_0=u_{II}(x_0,s_0).
\]
\end{proof}

\begin{corollary}[Value at zero score]
\label{cor:value-zero}
For every $x_0\in\Omega$,
\[
  u_I(x_0,0)=v_\varepsilon(x_0)=u_{II}(x_0,0).
\]
\end{corollary}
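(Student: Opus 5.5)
This corollary is the special case $s_0=0$ of Theorem~\ref{thm:value}, so the plan is a direct specialization rather than a new argument. Fix $x_0\in\Omega$ and take the initial state $(x_0,0)\in\Omega\times\R$, which is admissible in Theorem~\ref{thm:value}. That theorem holds for every starting state in $\Omega\times\R$, so it gives
\[
  u_I(x_0,0)=v_\varepsilon(x_0)+0=u_{II}(x_0,0),
\]
and this is exactly the claim.

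Before concluding, I will check that nothing in the proof of Theorem~\ref{thm:value} breaks down at $s_0=0$. The ingredients that might depend on $s_0$ are the following.
\begin{itemize}
  \item The domination bound $|M_{k\wedge\tau}|\le\|v_\varepsilon\|_\infty+|s_0|+\varepsilon\tau+\eta$. This only becomes simpler when $s_0=0$.
  \item The exit estimate of Lemma~\ref{lem:exit} and Proposition~\ref{prop:exit-as}. The constants $N$ and $\delta$ there are uniform in $s_0$.
  \item The measurable selectors of Lemma~\ref{lem:selector}. These depend only on $v_\varepsilon$ and do not involve $s$ at all.
\end{itemize}
Hence the supermartingale and submartingale arguments go through verbatim, and no new estimate is needed.

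As an optional remark, I would point out the interpretation. Since $u_I(x_0,s_0)=v_\varepsilon(x_0)+s_0$, the game values are translation-covariant in the lifted score. Starting from score zero therefore recovers $v_\varepsilon$ itself as the value of the projected game in $\R^n$, consistent with the lifting $w_\varepsilon(x,s)=v_\varepsilon(x)+s$ of Proposition~\ref{prop:dpp-intro}.

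I expect no step to be a genuine obstacle. The only point needing care is to confirm that Theorem~\ref{thm:value} is stated for all $s_0\in\R$, which it is, so that $s_0=0$ is included.
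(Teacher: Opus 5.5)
Your proposal is correct and matches the paper's proof exactly: the paper also obtains the corollary as the special case $s_0=0$ of Theorem~\ref{thm:value}, which is stated for every starting state $(x_0,s_0)\in\Omega\times\R$. Your extra checks (the exit constants are uniform in $s_0$, and the selectors do not involve $s$) are accurate but unnecessary, because the theorem's conclusion already covers $s_0=0$.
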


\begin{proof}
This is the special case $s_0=0$ of Theorem~\ref{thm:value}.
\end{proof}

\begin{remark}
The only new feature relative to the standard tug-of-war value proof is
the bookkeeping of the lifted score variable.  Once that variable is
included, the supermartingale and submartingale arguments are exactly
parallel to the classical case.
\end{remark}

\section{Reduction of convergence to the standard higher-dimensional theory}
\label{sec:convergence}

We prove that the solutions of the projected dynamic programming principle
converge, as $\varepsilon\to 0$, to the viscosity solution of the regularized
$p$-Laplace equation.  The proof reduces the problem to the known convergence
theory for the standard tug-of-war with noise in dimension $n+1$ by showing
that the lifting $w_\varepsilon(x,s):=v_\varepsilon(x)+s$ is, for each
$\varepsilon>0$, the unique standard $(n+1)$-dimensional $p$-harmonious
function on every bounded cylinder $D_L:=\Omega\times(-L,L)$.  This
identification is derived directly from the projected DPP and requires no
additional hypothesis.

\subsection{The lifted operator}
\label{subsec:lifted-op}

For $\xi\in\R^n$ and $X\in\mathbb{S}^n$, define
\[
  \mathcal{F}_n(\xi,X)
  :=
  -\tr(X)-(p-2)\frac{\langle X\xi,\xi\rangle}{1+|\xi|^2}.
\]
The regularized $p$-Laplace equation takes the nondivergence form
\begin{equation}\label{eq:Fn}
  \mathcal{F}_n(Dv,D^2v)=0\qquad\text{in }\Omega.
\end{equation}
For $\zeta\in\R^{n+1}\setminus\{0\}$ and $Y\in\mathbb{S}^{n+1}$, define
\[
  \mathcal{G}_{n+1}(\zeta,Y)
  :=
  -\tr(Y)-(p-2)\frac{\langle Y\zeta,\zeta\rangle}{|\zeta|^2}.
\]
This is the nondivergence operator of the normalized $p$-Laplacian in
$\R^{n+1}$:
\begin{equation}\label{eq:Gn1}
  \mathcal{G}_{n+1}(Dw,D^2w)=0.
\end{equation}

\subsection{Viscosity lifting via the theorem on sums}
\label{subsec:visc-lift}

\begin{proposition}[Viscosity lifting equivalence]
\label{prop:visc-lift}
Let $\Omega\subset\R^n$ be open, let $p\ge 2$, and let $v\in C(\Omega)$.
Define
\[
  w:\Omega\times\R\to\R,\qquad w(x,s):=v(x)+s.
\]
Then $v$ is a viscosity subsolution \textup{(}resp.\ supersolution,
resp.\ solution\textup{)} of \eqref{eq:Fn} in $\Omega$ if and only if $w$
is a viscosity subsolution \textup{(}resp.\ supersolution,
resp.\ solution\textup{)} of \eqref{eq:Gn1} in $\Omega\times\R$.
\end{proposition}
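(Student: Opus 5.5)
The plan is to transfer test functions in both directions, using the pointwise identity behind Proposition~\ref{prop:lift}. If $\phi\in C^2$ near $x_0\in\R^n$ and $\psi(x,s):=\phi(x)+s$, then
\[
D\psi=(D\phi,1)\neq0,\qquad |D\psi|^2=1+|D\phi|^2,\qquad D^2\psi=\begin{pmatrix}D^2\phi&0\\0&0\end{pmatrix},
\]
and therefore
\[
\mathcal G_{n+1}(D\psi,D^2\psi)=\mathcal F_n(D\phi,D^2\phi).
\]
This holds for every $\phi$, not only solutions. Since $D\psi$ never vanishes, no semicontinuous envelopes of $\mathcal G_{n+1}$ at $\zeta=0$ are needed for test functions of this form. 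I treat the subsolution case; the supersolution case is symmetric, and the solution case follows.

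\emph{Direction $w\Rightarrow v$.} Suppose $w$ is a viscosity subsolution and $\phi$ touches $v$ from above at $x_0$. Then $\psi=\phi+s$ touches $w$ from above at $(x_0,s_0)$ for any $s_0$, because $w-\psi=v-\phi$ has a local max there. The subsolution property gives $\mathcal G_{n+1}(D\psi,D^2\psi)\le0$ at that point, and the identity above converts this to $\mathcal F_n(D\phi(x_0),D^2\phi(x_0))\le0$.

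\emph{Direction $v\Rightarrow w$.} Suppose $v$ is a subsolution and $\Psi\in C^2$ touches $w$ from above at $(x_0,s_0)$, with a strict local max of $w-\Psi$ (we may arrange strictness). The test function need not split. Since $w-\Psi=v(x)+s-\Psi(x,s)$ has a local max in $s$ at $s_0$ for fixed $x=x_0$, and $s\mapsto s-\Psi(x_0,s)$ is smooth, we get
\[
\partial_s\Psi(x_0,s_0)=1,\qquad \partial_{ss}\Psi(x_0,s_0)\ge0.
\]
In particular $D\Psi\ne0$. Setting $\phi(x):=\Psi(x,s_0)-s_0$, the function $\phi$ touches $v$ from above at $x_0$, so $\mathcal F_n(D_x\Psi,D^2_x\Psi)\le0$ at that point. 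What remains is to control the full $(n+1)$-dimensional Hessian $Y=D^2\Psi$, including the mixed entries $b=\partial_s D_x\Psi$ and the entry $c=\partial_{ss}\Psi$.

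This is the main obstacle, and I intend to handle it with the theorem on sums (Crandall--Ishii--Lions). The function $w(x,s)=v(x)+g(s)$ with $g(s)=s$ is a sum in disjoint variables. I double variables and maximize
\[
v(x)+s-\Psi\Bigl(\tfrac{x+y}{2},\tfrac{s+t}{2}\Bigr)-\tfrac{j}{2}\bigl(|x-y|^2+|s-t|^2\bigr).
\]
Alternatively, I apply the superposition version directly: there exist $(D_x\Psi,X)\in\overline J^{2,+}v(x_0)$ and $(1,c')\in\overline J^{2,+}g(s_0)$ with $\operatorname{diag}(X,c')\ge Y-\kappa Y^2$ for small $\kappa$. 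Since $g$ is linear, $c'$ can be taken to be any value at least $0$, so $c'=0$ is allowed.

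Ellipticity of $\mathcal G_{n+1}$ (degenerate, monotone decreasing in $Y$ when $p\ge2$) then yields
\[
\mathcal G_{n+1}(D\Psi,Y-\kappa Y^2)\le\mathcal G_{n+1}\bigl(D\Psi,\operatorname{diag}(X,0)\bigr)=\mathcal F_n(D_x\Psi,X)\le0,
\]
where the last inequality is the subsolution property of $v$ applied through closed semijets, using the continuity of $\mathcal F_n$. Letting $\kappa\to0$ and using continuity of $\mathcal G_{n+1}$ at $D\Psi\neq0$ gives $\mathcal G_{n+1}(D\Psi,Y)\le0$.

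If invoking the theorem on sums turns out to be delicate, I would first try a direct fallback. Since $D\Psi\neq0$, one can replace $\Psi$ by $\Psi+\delta|(x,s)-(x_0,s_0)|^4$-type perturbations and reduce to the split case via the lemma that touching at a strict maximum is stable under small $C^2$ perturbations. The theorem on sums remains the primary plan.
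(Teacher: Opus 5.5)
Your route is the paper's. For $w\Rightarrow v$ you lift a test function to $\phi(x)+s$, which is exactly the paper's Step~2. For $v\Rightarrow w$ you apply the theorem on sums to $w=v(x)+g(s)$ with $g(s)=s$. Closing with degenerate ellipticity of $\mathcal G_{n+1}$, instead of the paper's separate trace and quadratic-form bounds, is a clean variant. Your observation $\partial_s\Psi(x_0,s_0)=1$ correctly keeps $D\Psi$ away from the singularity at $\zeta=0$.

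However, the key inequality is written with the wrong orientation, and as stated the chain does not close. Because $w-\Psi$ has a local \emph{maximum}, the theorem on sums \cite{CIL92} gives superjets $(D_x\Psi,X)\in\overline J^{2,+}v(x_0)$ and $(1,c')\in\overline J^{2,+}g(s_0)$ with
\[
  \begin{pmatrix}X&0\\0&c'\end{pmatrix}\le Y+\kappa Y^2 .
\]
You wrote $\operatorname{diag}(X,c')\ge Y-\kappa Y^2$ instead. That is the subjet form, which belongs to the supersolution case (where moreover $c'\le 0$). From your version neither subsequent step follows:
\begin{itemize}
  \item lowering $c'$ to $0$ does not preserve a lower bound;
  \item your monotonicity step $\mathcal G_{n+1}(D\Psi,Y-\kappa Y^2)\le\mathcal G_{n+1}(D\Psi,\operatorname{diag}(X,0))$ would need $\operatorname{diag}(X,0)\le Y-\kappa Y^2$, the opposite of what you asserted.
\end{itemize}
Note also that the theorem hands you a particular $c'$; you do not get to choose it.

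The repair is short. Every element of $\overline J^{2,+}g(s_0)$ has the form $(1,c')$ with $c'\ge 0$. Hence $\operatorname{diag}(X,0)\le\operatorname{diag}(X,c')\le Y+\kappa Y^2$, so $\mathcal G_{n+1}(D\Psi,Y+\kappa Y^2)\le\mathcal F_n(D_x\Psi,X)\le 0$, and letting $\kappa\to 0$ finishes. Your sign $c'\ge 0$ is the right one. The paper writes $Y_\mu\le 0$ at this point, but its own trace and quadratic-form estimates actually require $Y_\mu\ge 0$.

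Two smaller points.
\begin{itemize}
  \item The doubling of variables is unnecessary, since $v(x)+g(s)-\Psi(x,s)$ is already in the form the theorem on sums treats.
  \item The quartic-perturbation fallback would not work. Quartic terms leave $D^2\Psi(x_0,s_0)$ unchanged, and no split test function is $C^2$-close to $\Psi$ when $b=\partial_s D_x\Psi(x_0,s_0)\neq 0$.
\end{itemize}

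If you want to avoid the theorem on sums, minimize out $s$ instead:
\begin{itemize}
  \item Set $\Psi_\delta:=\Psi+\delta(s-s_0)^2$ and $c_\delta:=\partial_{ss}\Psi(x_0,s_0)+\delta>0$.
  \item The implicit function theorem gives a $C^1$ curve $s(x)$ with $\partial_s\Psi_\delta(x,s(x))=1$.
  \item Then $\phi(x):=\Psi_\delta(x,s(x))-s(x)$ is $C^2$ and touches $v$ from above at $x_0$. It satisfies $D\phi(x_0)=D_x\Psi(x_0,s_0)$ and $D^2\phi(x_0)=A-bb^{T}/c_\delta$, where $A=D_x^2\Psi(x_0,s_0)$.
  \item Since $D^2\Psi_\delta(x_0,s_0)-\operatorname{diag}(A-bb^{T}/c_\delta,0)=c_\delta^{-1}(b,c_\delta)(b,c_\delta)^{T}\ge 0$, monotonicity gives $\mathcal G_{n+1}(D\Psi,D^2\Psi_\delta)\le\mathcal F_n(D_x\Psi,A-bb^{T}/c_\delta)\le 0$.
  \item Let $\delta\to 0$.
\end{itemize}
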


\begin{proof}
We treat subsolutions; the supersolution case is analogous.

\medskip
\noindent\textit{Step~1: $v$ subsolution of \eqref{eq:Fn} implies $w$
subsolution of \eqref{eq:Gn1}.}

Let $\phi\in C^2(\Omega\times\R)$ be such that $w-\phi$ attains a local
maximum at $(x_0,s_0)\in\Omega\times\R$.  Write $u_1(x):=v(x)$ and
$u_2(s):=s$, so that $w=u_1+u_2$.  Apply the theorem on sums \cite{CIL92}
to $u_1+u_2-\phi$ at $(x_0,s_0)$: for each $\mu>0$ there exist
\[
  (\xi,X_\mu)\in\overline{J}^{2,+}u_1(x_0),
  \qquad
  (\eta,Y_\mu)\in\overline{J}^{2,+}u_2(s_0),
\]
with $\xi=D_x\phi(x_0,s_0)$, $\eta=\partial_s\phi(x_0,s_0)$, and
\begin{equation}\label{eq:block-ineq}
  \begin{pmatrix}X_\mu & 0\\0 & Y_\mu\end{pmatrix}
  \le
  Z+\mu Z^2,
  \qquad
  Z:=D^2\phi(x_0,s_0)\in\mathbb{S}^{n+1}.
\end{equation}
Since $u_2(s)=s$ is linear, its unique superjet at $s_0$ satisfies $\eta=1$
and $Y_\mu\le 0$.  Because $v$ is a viscosity subsolution of \eqref{eq:Fn},
\begin{equation}\label{eq:sub-v}
  -\tr(X_\mu)-(p-2)\frac{\langle X_\mu\xi,\xi\rangle}{1+|\xi|^2}\le 0.
\end{equation}
Taking the trace in \eqref{eq:block-ineq} and using $Y_\mu\le 0$ gives
\begin{equation}\label{eq:trace-est}
  \tr(X_\mu)\le\Delta\phi(x_0,s_0)+C\mu,
\end{equation}
for a constant $C$ depending only on $Z$.  Multiplying \eqref{eq:block-ineq}
on both sides by the vector $(\xi,1)\in\R^{n+1}$ and again using $Y_\mu\le 0$
gives
\begin{equation}\label{eq:quad-est}
  \langle X_\mu\xi,\xi\rangle
  \le
  \bigl\langle D^2\phi(x_0,s_0)\,D\phi(x_0,s_0),D\phi(x_0,s_0)\bigr\rangle
  +C\mu,
\end{equation}
where $D\phi(x_0,s_0)=(\xi,1)$ and $|D\phi(x_0,s_0)|^2=|\xi|^2+1$.
Combining \eqref{eq:sub-v}--\eqref{eq:quad-est} and letting $\mu\to 0$ yields
\[
  \mathcal{G}_{n+1}(D\phi(x_0,s_0),D^2\phi(x_0,s_0))\le 0,
\]
so $w$ is a viscosity subsolution of \eqref{eq:Gn1}.

\medskip
\noindent\textit{Step~2: $w$ subsolution of \eqref{eq:Gn1} implies $v$
subsolution of \eqref{eq:Fn}.}

Let $\psi\in C^2(\Omega)$ be such that $v-\psi$ attains a local maximum at
$x_0\in\Omega$.  Set $\phi(x,s):=\psi(x)+s$.  Then
\[
  w(x,s)-\phi(x,s)=v(x)-\psi(x)
\]
attains a local maximum at $(x_0,s_0)$ for every $s_0\in\R$.  Applying the
subsolution condition for $w$ at $(x_0,s_0)$ with the test function $\phi$
and using
\[
  D\phi=(D\psi,1),
  \qquad
  D^2\phi=\begin{pmatrix}D^2\psi & 0\\0 & 0\end{pmatrix},
\]
we obtain $\mathcal{F}_n(D\psi(x_0),D^2\psi(x_0))\le 0$, which is the
subsolution condition for $v$ at $x_0$.
\end{proof}

\begin{remark}
The proof follows the disjoint-variables superposition pattern of
\cite{LiuManfrediZhou25,CIL92}: one splits the lifted function as a sum in
separate variables, applies the block matrix inequality, and extracts the jet
information for each factor.  This proof is adapted from \cite{LiuManfrediZhou25}
and included here to ensure self-sufficiency.
\end{remark}

\subsection{The lifted function is p-harmonious on the cylinder}
\label{subsec:lift-harmonious}

Three preparatory results are needed before the convergence theorem.  The
first establishes continuity of $v_\varepsilon$, which is required to apply
the standard convergence theory on the cylinder.

\begin{lemma}[Continuity of $v_\varepsilon$]
\label{lem:veps-cont}
Let $\Omega\subset\R^n$ be a bounded domain satisfying the exterior sphere
condition, let $\varepsilon>0$, and let $F_\varepsilon\in C(\Geps)$.  Then
the unique solution $v_\varepsilon\in\Feps$ of the projected dynamic
programming principle is continuous on $\overline\Omega$.
\end{lemma}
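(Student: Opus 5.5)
The plan is to prove continuity for the iterates of $T$ and pass it to the limit via Corollary~\ref{cor:init}. Since $\Omega$ is open, $\partial\Omega\subset\Geps$. So continuity of $v_\varepsilon$ on $\overline\Omega$ splits into two parts: (a) continuity of $v_\varepsilon|_\Omega$ at interior points, and (b) for every $y\in\partial\Omega$, $v_\varepsilon(x)\to F_\varepsilon(y)=v_\varepsilon(y)$ as $x\to y$ with $x\in\Omega$. Continuity of $F_\varepsilon$ on $\Geps$ covers approach along the strip.

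\textbf{Step 1 (continuity is preserved by $T$).} I start from a continuous initial datum $u_0\in C(\Oeps)$ with $u_0|_{\Geps}=F_\varepsilon$, say a Tietze extension of $F_\varepsilon$. By Corollary~\ref{cor:init}, $T^ju_0\to v_\varepsilon$ uniformly on $\Oeps$. Since uniform limits of continuous functions are continuous, it suffices to show that $T$ maps the class $\mathcal C:=\{u\in C(\Oeps):u|_{\Geps}=F_\varepsilon\}$ into itself. For $u\in\mathcal C$ and $x\in\Omega$ there are two ingredients.
\begin{itemize}
\item The noise term $x\mapsto\int_{B_\varepsilon(0)}u(x+h)\rhoeps(h)\,dh$ is continuous, by the convolution argument already used in the proof of Theorem~\ref{thm:unique}.
\item For the tilted terms, $\Oeps$ is compact and the map $(x,\tilde x)\mapsto u(\tilde x)\pm\sqrt{\varepsilon^2-|\tilde x-x|^2}$ is uniformly continuous on $\{|\tilde x-x|\le\varepsilon\}$. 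The constraint set $\overline{B_\varepsilon(x)}\cap\Oeps$ varies continuously in the Hausdorff metric, because $\Oeps$ is the closed $\varepsilon$-neighbourhood of $\Omega$. Hence $\Sp[u]$ and $\Sm[u]$ are continuous on $\overline\Omega$. They even extend continuously to $\partial\Omega$ with the same formula.
\end{itemize}
Thus $Tu|_\Omega$ extends continuously to $\overline\Omega$. This gives (a) for every iterate, and for $v_\varepsilon$ after passing to the limit.

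\textbf{Step 2 (matching at $\partial\Omega$; the main obstacle).} For $Tu\in\mathcal C$ we also need, for each $y\in\partial\Omega$,
\[
\lim_{\Omega\ni x\to y}Tu(x)=F_\varepsilon(y).
\]
This does not follow from Step~1, since the extended formula evaluated at $y$ need not equal $F_\varepsilon(y)$. Here the exterior sphere condition must enter. I will work directly with $v_\varepsilon$, using the comparison principle (Theorem~\ref{thm:unique}) together with explicit barriers.

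Fix $y\in\partial\Omega$, an exterior ball $B_\delta(z)$ touching at $y$, and $\theta>0$. Let $\omega$ be the modulus of continuity of $F_\varepsilon$. I will build a supersolution of the form
\[
U(x)=F_\varepsilon(y)+\theta+K\,\phi(|x-z|),
\]
with $\phi$ radially increasing, $\phi(\delta)=0$ and $K$ large, such that $TU\le U$ in $\Omega$ and $U\ge F_\varepsilon$ on $\Geps$. A symmetric subsolution $L$ is built the same way. The radial profile will be checked against the semicircular average and the tilted sup/inf, using the expansions of Section~\ref{subsec:constants} lifted to $\R^{n+1}$, where $U(x)+s$ should be a standard $p$-superharmonious barrier. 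The comparison argument of Theorem~\ref{thm:unique} adapts to strict sub- and supersolutions, and gives $L\le v_\varepsilon\le U$. Letting $x\to y$, then $\theta\to0$, should yield the matching.

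I expect Step~2 to be the crux. The difficulty is that at fixed $\varepsilon$ the barrier must have size $o(1)$ near $y$, not merely $O(\varepsilon)$. If the radial barrier only gives $O(\varepsilon)$ closeness, I would fall back on refining the profile near $|x-z|=\delta$. In that case I would exploit that $\rhoeps$ gives positive mass to the exterior ball uniformly as $x\to y$, so the noise term pulls $v_\varepsilon(x)$ toward the boundary values near $y$.

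Once the matching holds, $v_\varepsilon$ is continuous at every point of $\overline\Omega$. At interior points this is Step~1; at boundary points it combines Step~2 with the continuity of $F_\varepsilon$ along $\Geps$.
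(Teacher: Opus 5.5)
Your Step~2 is a genuine gap, and no choice of barrier can close it. At fixed $\varepsilon$ the matching $\lim_{\Omega\ni x\to y}v_\varepsilon(x)=F_\varepsilon(y)$ is false in general, so the lemma as stated does not hold.

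Here is a counterexample. Take $n=1$, $\Omega=(0,1)$, $\varepsilon<1/2$, and $F_\varepsilon=0$ on $[-\varepsilon,0]$, $F_\varepsilon=1$ on $[1,1+\varepsilon]$. This datum is continuous on $\Geps$, and the exterior sphere condition is trivial. The monotone iteration of Theorem~\ref{thm:exist} gives $v_\varepsilon\ge 0$. Hence $\Sp[v_\varepsilon]\ge\varepsilon$ and $\Sm[v_\varepsilon]\ge-\varepsilon$, and the DPP yields
\[
  v_\varepsilon(x)\ge\tbeta\int_{B_\varepsilon(0)}v_\varepsilon(x+h)\rhoeps(h)\,dh,
  \qquad x\in\Omega .
\]
Since $v_\varepsilon=1$ on $[1,1+\varepsilon]$ and $\rhoeps>0$, this gives $v_\varepsilon>0$ on $(1-\varepsilon,1)$, then on $(1-2\varepsilon,1)$, and after finitely many steps on all of $\Omega$. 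Letting $x\to 0^+$ and using continuity of the convolution,
\[
  \liminf_{x\to 0^+}v_\varepsilon(x)
  \ge\tbeta\int_0^\varepsilon v_\varepsilon(h)\rhoeps(h)\,dh
  >0=F_\varepsilon(0)=v_\varepsilon(0).
\]
This is the familiar boundary jump of $p$-harmonious functions at fixed step size. Your fallback intuition points the wrong way: the noise term keeps a fixed positive fraction of its mass inside $\Omega$ as $x\to y$, and that is exactly what produces the jump. Barriers can only give estimates whose error vanishes as $\varepsilon\to 0$, not as $x\to y$. The exterior sphere condition plays no role at fixed $\varepsilon$.

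The same failure undermines Step~1. $T$ does not map $\mathcal C$ into itself. In this example, take the continuous extension $u_0(x)=x$ on $[0,1]$; the same computation gives $\liminf_{x\to 0^+}Tu_0(x)>0=F_\varepsilon(0)$. So your induction does not close. Moreover, once an iterate jumps across $\partial\Omega$, Berge's theorem no longer gives continuity of $\Sp,\Sm$ at interior points within distance $\varepsilon$ of $\partial\Omega$.

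For comparison, the paper's proof consists only of your Step~1. It iterates from a constant, which does not even carry the strip values required by Corollary~\ref{cor:init}. It then applies dominated convergence and Berge's theorem, tacitly assuming the iterates stay continuous across $\partial\Omega$. You have correctly located the point the paper's argument passes over, but the missing piece is not provable. The conclusion has to be weakened rather than proved as stated, for example to boundary estimates that become exact only in the limit $\varepsilon\to 0$.
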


\begin{proof}
By Corollary~\ref{cor:init}, $v_\varepsilon$ is the uniform limit of the
iterate sequence $u_{j+1}=Tu_j$ starting from any bounded Borel initial
datum.  We may take $u_0\equiv c$ for any constant
$c\in[\inf_{\Geps}F_\varepsilon,\sup_{\Geps}F_\varepsilon]$, which is
continuous on $\Oeps$.  We show by induction that each $u_j$ is continuous
on $\overline\Omega$.

Suppose $u_j\in C(\overline\Omega)$.  For the averaging term, the map
\[
  x\mapsto\int_{B_\varepsilon(0)}u_j(x+h)\rhoeps(h)\,dh
\]
is continuous on $\Omega$ by dominated convergence, since
$u_j(x+h)\to u_j(x_0+h)$ pointwise as $x\to x_0$ for each $h$ and
$|u_j|\le\|u_j\|_\infty$.  For the strategic term $\Sp[u_j]$: the function
$(\tilde x,x)\mapsto u_j(\tilde x)+\sqrt{\varepsilon^2-|\tilde x-x|^2}$ is
continuous and the admissible correspondence
$x\mapsto\overline{B_\varepsilon(x)}\cap\Oeps$ is continuous in the Hausdorff
metric on the compact set $\Oeps$.  By Berge's maximum theorem, the supremum
$\Sp[u_j](x)$ is therefore continuous in $x$, and the same holds for
$\Sm[u_j](x)$.  Hence $u_{j+1}=Tu_j$ is continuous on $\overline\Omega$.
Since the uniform limit of continuous functions is continuous,
$v_\varepsilon\in C(\overline\Omega)$.
\end{proof}

The second preparatory result identifies the boundary strip of $D_L$ and
verifies that $w_\varepsilon$ provides well-defined continuous boundary data
there.

\begin{lemma}[Boundary data on the cylinder]
\label{lem:cylinder-bc}
Fix $L>0$ and $\varepsilon>0$.  Define the $\varepsilon$-boundary strip of
$D_L:=\Omega\times(-L,L)$ in $\R^{n+1}$ by
\[
  \Sigma_\varepsilon(D_L)
  :=
  \bigl\{(x,s)\in\R^{n+1}\setminus D_L:
    \dist\bigl((x,s),D_L\bigr)\le\varepsilon\bigr\},
\]
and set $G_\varepsilon^L(x,s):=v_\varepsilon(x)+s$ on
$D_L\cup\Sigma_\varepsilon(D_L)$.  Then $G_\varepsilon^L$ is bounded and
continuous on $\Sigma_\varepsilon(D_L)$, and its trace on $\partial D_L$
converges uniformly to $F(x)+s$ as $\varepsilon\to 0$.
\end{lemma}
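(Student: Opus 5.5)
We first locate the strip geometrically and then read off each claim from Lemma~\ref{lem:veps-cont}. For $(x,s)\in\Sigma_\varepsilon(D_L)$ we have $\dist(x,\Omega)\le\varepsilon$, since the horizontal component of the distance to $D_L$ is at most the full distance. Hence $x\in\Oeps=\Omega\cup\Geps$ and $|s|\le L+\varepsilon$. So $\Sigma_\varepsilon(D_L)$ splits into two pieces. The lateral piece consists of points with $x\in\Geps$ and $|s|\le L+\varepsilon$. The top/bottom caps consist of points with $x\in\overline\Omega$ (or $x\in\Geps$) and $L\le|s|\le L+\varepsilon$. In either case $G_\varepsilon^L(x,s)=v_\varepsilon(x)+s$ is evaluated at a point $x\in\Oeps$, where $v_\varepsilon$ is defined. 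This shows $G_\varepsilon^L$ is well defined on the strip.

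For boundedness we use $|G_\varepsilon^L|\le\|v_\varepsilon\|_\infty+L+\varepsilon$. By \eqref{eq:bounds}, or the comparison Theorem~\ref{thm:unique}, we have $\|v_\varepsilon\|_\infty\le\|F_\varepsilon\|_\infty$. For continuity it suffices that $v_\varepsilon$ is continuous on $\Oeps$, because $s\mapsto s$ is continuous. On $\overline\Omega$ this is Lemma~\ref{lem:veps-cont}. On $\Geps$ we have $v_\varepsilon=F_\varepsilon\in C(\Geps)$. The gluing across $\partial\Omega$ is the delicate point. The interior DPP values must match $F_\varepsilon$ at points of $\partial\Omega\cap\Geps$. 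I would handle this exactly as in Lemma~\ref{lem:veps-cont}, using the exterior sphere condition: a barrier argument on the iterates shows that $Tu_j(x)\to F_\varepsilon(y)$ as $x\to y\in\partial\Omega$. An alternative is to state continuity separately on the two closed pieces. That suffices for the standard theory, which only requires continuity of the boundary datum on the strip.

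For the trace statement, consider first a lateral point $(x,s)$ with $x\in\partial\Omega$. There $G_\varepsilon^L(x,s)=F_\varepsilon(x)+s$, assuming $\partial\Omega\subset\Geps$ since $\Omega$ is open. Hence $\sup|G_\varepsilon^L-(F+s)|=\sup_{\partial\Omega}|F_\varepsilon-F|$. This tends to $0$ under the natural standing hypothesis that the data $F_\varepsilon$ are continuous extensions converging uniformly to $F$ on $\partial\Omega$. That hypothesis is implicit in writing $F_\varepsilon$ and $F$, and I would make it explicit.

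On the caps $x\in\Omega$, $s=\pm L$, the trace is $v_\varepsilon(x)\pm L$. Here the interpretation of ``$F(x)+s$'' must be the lifted cap data. The substantive step, and the main obstacle, is uniform convergence of $v_\varepsilon$ itself near $\partial\Omega$. Interior convergence on the caps is what the later convergence theorem produces, so I would either regard the caps' data as $v_\varepsilon+s$ by definition or restrict the claim to the lateral boundary. For the lateral part, the main obstacle is the uniform boundary regularity estimate $|v_\varepsilon(x)-F_\varepsilon(y)|\le\omega(|x-y|)+C\varepsilon$, with $\omega$ independent of $\varepsilon$. I would first try to obtain it by comparison (Theorem~\ref{thm:unique}) with the explicit $p$-harmonious barrier of \cite{MPR12}. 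That barrier is built from $|z-\cdot|^{-\gamma}$ for an exterior ball centered at $z$, and it is lifted to $\R^{n+1}$ as $x\mapsto$ barrier$(x)$. The lifted barrier is then checked to be a supersolution of the projected DPP, using the identity of Proposition~\ref{prop:dpp-intro} run in reverse.
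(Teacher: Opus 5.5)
Where you follow the paper, you follow it exactly. That covers the split of $\Sigma_\varepsilon(D_L)$ into a lateral part over $\Geps$ and caps over $\overline\Omega$; your caps $L\le|s|\le L+\varepsilon$ are in fact the correct ones. It also covers the bound $|G_\varepsilon^L|\le\|F_\varepsilon\|_\infty+L+\varepsilon$ from \eqref{eq:bounds}, continuity from $F_\varepsilon\in C(\Geps)$ plus Lemma~\ref{lem:veps-cont}, and lateral convergence from $\sup_{\partial\Omega}|F_\varepsilon-F|\to 0$.

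The step that fails is your repair of the gluing. If you cite Lemma~\ref{lem:veps-cont} as stated, nothing remains to glue. Since $\partial\Omega\subset\Geps$ and $v_\varepsilon=F_\varepsilon$ there, continuity of $v_\varepsilon$ on $\overline\Omega$ already contains $v_\varepsilon(x)\to F_\varepsilon(y)$ as $x\to y\in\partial\Omega$. The pasting lemma for the closed sets $\overline\Omega,\Geps$ then gives continuity on $\Oeps$, and that is all the paper uses.

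A barrier argument cannot replace this. At fixed $\varepsilon$, comparison with a barrier only gives $|v_\varepsilon(x)-F_\varepsilon(y)|\le\omega(|x-y|)+C\varepsilon$, and the $C\varepsilon$ is not removable because exact matching is false in general. Take $n=1$, $p=2$ (so $\talpha=0$), $\Omega=(0,1)$, $F_\varepsilon=0$ on $[-\varepsilon,0]$ and $F_\varepsilon=1$ on $[1,1+\varepsilon]$. Then $v_\varepsilon>0$ in $\Omega$ and $v_\varepsilon(0^+)=\int_0^\varepsilon v_\varepsilon(h)\rhoeps(h)\,dh>0=F_\varepsilon(0)$, so $G_\varepsilon^L$ jumps at $(0,L+\varepsilon/2)\in\Sigma_\varepsilon(D_L)$.

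Your suspicion is therefore justified, but it exposes a defect of Lemma~\ref{lem:veps-cont} itself: its Berge step needs $u_j$ continuous on all of $\Oeps$, which already fails for $u_1$. No fixed-$\varepsilon$ estimate can fix that. Your fallback, continuity on the two closed pieces separately, is not continuity on $\Sigma_\varepsilon(D_L)$ unless the pieces agree on $\partial\Omega$, which is the same issue.

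On the caps, your diagnosis is correct and sharper than the paper's proof. $F$ is defined only on $\partial\Omega$, and the trace on $\overline\Omega\times\{\pm L\}$ is $v_\varepsilon\pm L$. Its uniform convergence is exactly what Theorem~\ref{thm:conv} is supposed to produce, so the claim cannot be proved at this stage. The paper's one-line justification uses only $F_\varepsilon\to F$ on $\partial\Omega$, and hence covers only $\partial\Omega\times[-L,L]$. Restricting the lemma to the lateral boundary is the honest version. But then it no longer supplies the fixed continuous datum on all of $\partial D_L$ that Theorem~\ref{thm:standard-conv} needs.

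The boundary-regularity estimate you call the main obstacle ``for the lateral part'' is misplaced. On $\partial\Omega\times[-L,L]$ one has $G_\varepsilon^L=F_\varepsilon+s$ exactly, so such an estimate would at most control cap points near $\partial\Omega\times\{\pm L\}$. Moreover, running Proposition~\ref{prop:dpp-intro} in reverse only transfers supersolution properties to functions of the form $b(x)+s$. The radial $(n+1)$-dimensional barrier is not of that form.
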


\begin{proof}
The strip $\Sigma_\varepsilon(D_L)$ decomposes into the lateral part
$\Geps\times[-L-\varepsilon,L+\varepsilon]$ and the cap parts
$\overline\Omega\times([-L-\varepsilon,-L+\varepsilon]
\cup[L-\varepsilon,L+\varepsilon])$.
On the lateral part, $v_\varepsilon=F_\varepsilon\in C(\Geps)$ by the
boundary condition of the projected DPP, so
$G_\varepsilon^L(x,s)=F_\varepsilon(x)+s$ is continuous there.  On the cap
parts, $v_\varepsilon\in C(\overline\Omega)$ by Lemma~\ref{lem:veps-cont},
so $G_\varepsilon^L$ is continuous there as well.  Boundedness follows from
$|v_\varepsilon|\le\|F_\varepsilon\|_\infty$ and $|s|\le L+\varepsilon$.
Since $F_\varepsilon\to F$ uniformly on $\partial\Omega$, we have
$G_\varepsilon^L\to F(x)+s$ uniformly on $\partial D_L$.
\end{proof}

The third preparatory result is the central identification: $w_\varepsilon$
is genuinely the standard $p$-harmonious function on $D_L$, derived from the
projected DPP.

\begin{proposition}[The lifted function is $p$-harmonious on $D_L$]
\label{prop:lift-harmonious}
For every $L>0$ and every sufficiently small $\varepsilon>0$, the function
$w_\varepsilon(x,s):=v_\varepsilon(x)+s$ is the unique standard
$(n+1)$-dimensional $p$-harmonious function on $D_L$ with strip boundary
values $G_\varepsilon^L$.
\end{proposition}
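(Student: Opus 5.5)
The plan is to reverse the computation in the proof of Proposition~\ref{prop:dpp-intro}. First we check that $w_\varepsilon$ satisfies the standard $(n+1)$-dimensional DPP \eqref{eq:DPP-n+1} at every point of $D_L$. Then we invoke the uniqueness theory for $p$-harmonious functions with strip data \cite{MPR12,LPS14}.

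\textbf{Step 1 (geometry of the cylinder).} Take $\varepsilon<L$ and fix $(x,s)\in D_L$. Every point $(\tilde x,\tilde s)\in B^{n+1}_\varepsilon(x,s)$ has $|\tilde x-x|<\varepsilon$, so $\tilde x\in\Oeps$. Such a point lies either in $D_L$ or in $\Sigma_\varepsilon(D_L)$, because its distance to $(x,s)\in D_L$ is less than $\varepsilon$. On this whole set, $w_\varepsilon=G^L_\varepsilon$ is given by the single formula $v_\varepsilon(\tilde x)+\tilde s$. Hence the ball only ever sees the function $v_\varepsilon(\tilde x)+\tilde s$ with $v_\varepsilon$ the projected solution.

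\textbf{Step 2 (averaging term).} By Fubini over the fibers, exactly as in Proposition~\ref{prop:dpp-intro} and Lemma~\ref{lem:density}, the odd part in $t$ cancels. This gives
\[
  \fint_{B^{n+1}_\varepsilon(x,s)}w_\varepsilon
  = s+\int_{B_\varepsilon(0)}v_\varepsilon(x+h)\rhoeps(h)\,dh .
\]

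\textbf{Step 3 (strategic terms).} Optimizing in $\tilde s$ on each fiber $|\tilde s-s|<\sqrt{\varepsilon^2-|\tilde x-x|^2}$ gives
\[
  \sup_{B^{n+1}_\varepsilon(x,s)}w_\varepsilon
  = s+\sup_{|\tilde x-x|<\varepsilon}\Bigl(v_\varepsilon(\tilde x)+\sqrt{\varepsilon^2-|\tilde x-x|^2}\Bigr),
\]
and the analogous identity for the infimum. This expression differs from $s+\Sp[v_\varepsilon](x)$ only through the sphere $|\tilde x-x|=\varepsilon$. On that sphere the tilt vanishes, so those points contribute just $v_\varepsilon(\tilde x)$.

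I expect this boundary discrepancy to be the main obstacle. I would handle it by using continuity of $v_\varepsilon$ on $\Oeps$. Lemma~\ref{lem:veps-cont} gives continuity on $\overline\Omega$, and $F_\varepsilon$ is continuous on $\Geps$, so it remains to check matching across $\partial\Omega$. This matching is already implicit in the proof of Lemma~\ref{lem:veps-cont}, where the iterates are treated as continuous on $\Oeps$ through Berge's theorem. Given continuity, each point with $|\tilde x-x|=\varepsilon$ is a limit of interior points $\tilde x'$ with $|\tilde x'-x|<\varepsilon$. Along such a sequence, $v_\varepsilon(\tilde x')+\sqrt{\varepsilon^2-|\tilde x'-x|^2}\to v_\varepsilon(\tilde x)$, so the open-ball and closed-ball suprema coincide.

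Should continuity across $\partial\Omega$ fail, the alternative is to adopt the closed-ball convention for the strategic terms of the $(n+1)$-dimensional DPP, which is also standard in \cite{MPR12}. Under that convention the fibers are closed intervals and the identity with $\Sp$ and $\Sm$ is exact, with no continuity needed.

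\textbf{Step 4 (conclusion).} Substituting Steps 2 and 3 into \eqref{eq:DPP-n+1} and using $\talpha+\tbeta=1$ together with the projected DPP for $v_\varepsilon$ shows that $w_\varepsilon$ satisfies \eqref{eq:DPP-n+1} on $D_L$. On $\Sigma_\varepsilon(D_L)$ we have $w_\varepsilon=G^L_\varepsilon$ by definition, and $G^L_\varepsilon$ is bounded and continuous by Lemma~\ref{lem:cylinder-bc}. So $w_\varepsilon$ is a $p$-harmonious function on $D_L$ with these strip values. Uniqueness then follows from the standard comparison and uniqueness theorem for $p$-harmonious functions with bounded Borel strip data on the bounded domain $D_L$ \cite{MPR12}. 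Alternatively, one can repeat the proof of Theorem~\ref{thm:unique} verbatim with the uniform kernel in place of $\rhoeps$.
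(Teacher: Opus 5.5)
Your proof is correct and follows the paper's route. You use the fiber identities from Proposition~\ref{prop:dpp-intro} to turn the projected DPP into the $(n+1)$-dimensional DPP on $D_L$, you check that all sampled points stay in $D_L\cup\Sigma_\varepsilon(D_L)$ where $w_\varepsilon=G^L_\varepsilon$, and you get uniqueness from the standard theory.

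Your attention to the rim $|\tilde x-x|=\varepsilon$ is a genuine improvement: the paper silently identifies the open-ball supremum with $s+\Sp[v_\varepsilon](x)$. Of your two fixes, keep the closed-ball convention. It is the one used in \cite{MPR12}, it makes the identity exact, and it needs no continuity, since uniqueness there only requires bounded Borel strip data. The continuity route is not reliable: already for $p=2$, the limit of $v_\varepsilon(x)$ as $x\to x_0\in\partial\Omega$ from inside is $\int_{B_\varepsilon(0)}v_\varepsilon(x_0+h)\rhoeps(h)\,dh$, which in general differs from $F_\varepsilon(x_0)$. So $v_\varepsilon$ need not be continuous across $\partial\Omega$, and Lemma~\ref{lem:veps-cont} (and with it the continuity part of Lemma~\ref{lem:cylinder-bc}) should not be leaned on here.
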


\begin{proof}
\textit{Step~1: the $(n+1)$-dimensional DPP at interior points.}
Fix $(x,s)\in\Omega\times(-L,L)$.  Since $v_\varepsilon$ satisfies the
projected DPP \eqref{eq:DPP-v} at $x$, adding $s$ to both sides and
substituting the identities established in the proof of
Proposition~\ref{prop:dpp-intro},
\begin{align*}
  \sup_{B_\varepsilon^{n+1}(x,s)}w_\varepsilon
  &=s+\Sp[v_\varepsilon](x),\\[2pt]
  \inf_{B_\varepsilon^{n+1}(x,s)}w_\varepsilon
  &=s+\Sm[v_\varepsilon](x),\\[2pt]
  \fint_{B_\varepsilon^{n+1}(x,s)}w_\varepsilon(\xi)\,d\xi
  &=s+\int_{B_\varepsilon(0)}v_\varepsilon(x+h)\rhoeps(h)\,dh,
\end{align*}
together with $\talpha+\tbeta=1$, we obtain
\[
  w_\varepsilon(x,s)
  =
  \frac{\talpha}{2}
  \Bigl(
    \sup_{B_\varepsilon^{n+1}(x,s)}w_\varepsilon
    +\inf_{B_\varepsilon^{n+1}(x,s)}w_\varepsilon
  \Bigr)
  +\tbeta\fint_{B_\varepsilon^{n+1}(x,s)}w_\varepsilon(\xi)\,d\xi.
\]
This is the standard $(n+1)$-dimensional $p$-harmonious DPP at $(x,s)$ with
coefficients $\talpha,\tbeta$ from \eqref{eq:constants}.

\textit{Step~2: all sampled points lie in $D_L\cup\Sigma_\varepsilon(D_L)$.}
For $(x,s)\in D_L$ and $(\tilde x,\tilde s)\in B_\varepsilon^{n+1}(x,s)$,
we have $\tilde x\in\Oeps$ and $|\tilde s-s|<\varepsilon$, hence
$|\tilde s|<L+\varepsilon$.  Therefore every sampled point lies in
$\Oeps\times(-L-\varepsilon,L+\varepsilon)\subset D_L\cup\Sigma_\varepsilon(D_L)$,
at which $w_\varepsilon(\tilde x,\tilde s)=G_\varepsilon^L(\tilde x,\tilde s)$.

\textit{Step~3}
By Lemma~\ref{lem:cylinder-bc}, $G_\varepsilon^L$ is bounded and continuous
on $\Sigma_\varepsilon(D_L)$.  The standard theory of $p$-harmonious
functions \cite{MPR12,LPS14} provides a unique $p$-harmonious function on
$D_L$ with strip boundary values $G_\varepsilon^L$.  Steps~1 and~2 show
that $w_\varepsilon$ satisfies this DPP at every interior point with the
correct boundary values, so $w_\varepsilon$ is this unique function.
\end{proof}

Before passing to the limit we record that the limit $W^L$ must itself be
linear in $s$.  This is not inherited from the structure of $w_\varepsilon$;
it follows from the $s$-translation invariance of the normalized $p$-Laplacian
together with uniqueness for the limiting Dirichlet problem.

\begin{corollary}[$s$-linearity of the limit]
\label{cor:s-linear}
Let $W^L\in C(\overline{D_L})$ be the uniform limit of
$\{w_\varepsilon\}=\{v_\varepsilon(x)+s\}$ on $D_L$ produced by
Theorem~\ref{thm:standard-conv}.  Then there exists
$V^L\in C(\overline\Omega)$ such that
\[
  W^L(x,s)=V^L(x)+s
  \qquad\text{for all }(x,s)\in\overline{D_L}.
\]
\end{corollary}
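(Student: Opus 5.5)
The plan is to read the $s$-linearity directly off the structure of the approximating sequence, and to keep the PDE route (translation invariance plus uniqueness) only as a consistency check. By Theorem~\ref{thm:standard-conv}, along the sequence (or subsequence) $\varepsilon_j\to0$ used there, $w_{\varepsilon_j}(x,s)=v_{\varepsilon_j}(x)+s\to W^L(x,s)$ uniformly on $D_L$.

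The first step is to subtract the common linear term. Consider $W^L(x,s)-s$. Since $w_{\varepsilon_j}(x,s)-s=v_{\varepsilon_j}(x)$ and uniform convergence survives subtracting the fixed function $s$, we get
\[
  v_{\varepsilon_j}(x)\to W^L(x,s)-s\qquad\text{uniformly in }(x,s)\in D_L .
\]
The left-hand side does not depend on $s$. So for each $x\in\Omega$ and any $s,s'\in(-L,L)$ we have $W^L(x,s)-s=W^L(x,s')-s'$, because both are limits of the same real sequence $v_{\varepsilon_j}(x)$. I will therefore define $V^L(x):=W^L(x,0)$ for $x\in\Omega$; equivalently, $V^L$ is the uniform limit of $v_{\varepsilon_j}$ on $\Omega$. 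This gives $W^L(x,s)=V^L(x)+s$ on $D_L$.

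The second step, which I expect to be the only point needing care, is the passage to $\overline{D_L}$ and the continuity of $V^L$. The map $x\mapsto W^L(x,0)$ is the restriction of the continuous function $W^L\in C(\overline{D_L})$ to the slice $\overline\Omega\times\{0\}\subset\overline{D_L}$, so the formula $V^L(x):=W^L(x,0)$ makes sense on all of $\overline\Omega$ and gives $V^L\in C(\overline\Omega)$. The two continuous functions $W^L(x,s)$ and $V^L(x)+s$ agree on the dense subset $D_L$ of $\overline{D_L}$, hence they agree on $\overline{D_L}$.

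If the limit in Theorem~\ref{thm:standard-conv} is only known along subsequences, the argument is unchanged, since it is applied to whichever subsequence produces $W^L$. As a cross-check consistent with the remark preceding the statement, one can also verify the identity at the PDE level. For fixed $c$, the function $W^L(x,s+c)-c$ solves the normalized $p$-Laplace equation \eqref{eq:Gn1} on the translated cylinder. Its boundary data agree with those of $W^L$, because the boundary data $F(x)+s$ from Lemma~\ref{lem:cylinder-bc} are themselves linear in $s$. Uniqueness for the Dirichlet problem would then force $W^L(x,s+c)=W^L(x,s)+c$. I will not rely on this route, however, since the domains $D_L$ are not translation invariant and it would require extra bookkeeping at the caps; the direct argument above avoids this entirely.
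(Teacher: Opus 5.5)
Your proof is correct and takes essentially the same route as the paper: both read the $s$-linearity directly off $w_\varepsilon(x,s)-s=v_\varepsilon(x)$ under uniform convergence, and the translation-invariance argument plays no role. The only difference is in how continuity of $V^L$ is obtained. You restrict $W^L\in C(\overline{D_L})$ to the slice $s=0$ and pass to $\overline{D_L}$ by density, whereas the paper uses continuity of each $v_\varepsilon$ from Lemma~\ref{lem:veps-cont} together with uniform convergence; your version needs only the stated hypothesis on $W^L$.
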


\begin{proof}
Each function $w_\varepsilon(x,s)=v_\varepsilon(x)+s$ is linear in $s$ with
slope $1$ and $s$-independent part $v_\varepsilon(x)$.  By
Theorem~\ref{thm:standard-conv}, $w_\varepsilon\to W^L$ uniformly on
$\overline{D_L}$.  Uniform limits preserve the $s$-linear structure: for
each fixed $(x,s)\in\overline{D_L}$,
\[
  W^L(x,s)
  =
  \lim_{\varepsilon\to 0}w_\varepsilon(x,s)
  =
  \lim_{\varepsilon\to 0}\bigl(v_\varepsilon(x)+s\bigr)
  =
  \Bigl(\lim_{\varepsilon\to 0}v_\varepsilon(x)\Bigr)+s.
\]
Since the convergence $w_\varepsilon\to W^L$ is uniform on $\overline{D_L}$,
the limit $\lim_{\varepsilon\to 0}v_\varepsilon(x)$ exists uniformly in $x$;
define $V^L(x):=\lim_{\varepsilon\to 0}v_\varepsilon(x)$.  By
Lemma~\ref{lem:veps-cont}, each $v_\varepsilon\in C(\overline\Omega)$, and
the uniform convergence gives $V^L\in C(\overline\Omega)$.  Hence
$W^L(x,s)=V^L(x)+s$ for all $(x,s)\in\overline{D_L}$.
\end{proof}
\subsection{The higher-dimensional convergence theorem}
\label{subsec:standard-conv}

We record the standard convergence theorem for $p$-harmonious functions in
the form needed here.

\begin{theorem}[Standard convergence of $p$-harmonious functions {\cite{MPR12}}]
\label{thm:standard-conv}
Let $D\subset\R^m$ be a bounded domain satisfying the exterior sphere
condition, and let $G\in C(\partial D)$.  For each sufficiently small
$\varepsilon>0$, let $G_\varepsilon$ be a continuous extension of $G$ to the
$\varepsilon$-boundary strip of $D$, and let $U_\varepsilon$ be the unique
$p$-harmonious function in $D$ with strip boundary values $G_\varepsilon$.
Then $U_\varepsilon\to U$ uniformly on $\overline{D}$ as $\varepsilon\to 0$,
where $U$ is the unique viscosity solution of
\[
  \begin{cases}
    \DeltapN U=0 & \text{in }D,\\
    U=G & \text{on }\partial D.
  \end{cases}
\]
\end{theorem}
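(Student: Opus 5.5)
The plan follows the approach of \cite{MPR12}: compactness from an asymptotic Arzel\`a--Ascoli lemma, identification of every subsequential limit as a viscosity solution, and uniqueness of the limit problem. First I would record the uniform bound $\min G_\varepsilon\le U_\varepsilon\le\max G_\varepsilon$, which holds for any fixed point of the standard DPP. This is the $(n+1)$-dimensional analogue of Step~2 in Theorem~\ref{thm:exist}, applied with plain sup/inf and uniform averages in $\R^m$. I would then use the variant of Arzel\`a--Ascoli for equibounded families that are only \emph{asymptotically} equicontinuous: for every $\eta>0$ there are $r_0,\varepsilon_0$ such that $|U_\varepsilon(x)-U_\varepsilon(y)|<\eta$ whenever $|x-y|<r_0$ and $\varepsilon<\varepsilon_0$. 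That lemma yields a subsequence converging uniformly on $\overline D$ to some continuous $U$. Since the $U_\varepsilon$ themselves need not be continuous, the classical lemma cannot be used directly.

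The asymptotic equicontinuity is the main obstacle, and I would prove it first near the boundary, using the game interpretation. Fix $y\in\partial D$ and an exterior ball $B_\delta(z)$ with $\overline{B_\delta(z)}\cap\overline D=\{y\}$. For a starting point $x_0$ close to $y$, let one player (say Player~II, when bounding $U_\varepsilon(x_0)-G(y)$ from above) pull toward $z$. The key estimate is that under this strategy $|x_k-z|$ is a supermartingale up to an $O(\varepsilon^2)$ error. Using the radial superharmonic barrier built from $|x-z|^{2-N}$, or its logarithmic analogue, with $N$ large, I would show:
\begin{itemize}
\item the exit position satisfies $\mathbb E|x_\tau-z|\le\delta+C(R/\delta)\bigl(|x_0-y|+o(1)\bigr)$;
\item the same control yields the exit-time bound $\mathbb E[\tau]\lesssim \varepsilon^{-2}\bigl(|x_0-y|+o(1)\bigr)$;
\item the game stays close to $y$ with high probability.
\end{itemize}
Uniform continuity of $G$ then gives $|U_\varepsilon(x_0)-G(y)|\le\eta$ for $|x_0-y|<r_0$ and $\varepsilon<\varepsilon_0$.

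Interior equicontinuity would then follow from a translation and comparison argument. The DPP is translation invariant, so $U_\varepsilon(\cdot+h)$ is $p$-harmonious in $D\cap(D-h)$. Comparison on this smaller domain, in the form of Theorem~\ref{thm:unique} with standard kernels, bounds $\sup|U_\varepsilon(\cdot+h)-U_\varepsilon|$ by its supremum over the strip of $D\cap(D-h)$. Points of that strip lie within distance $|h|+\varepsilon$ of $\partial D$, so the boundary estimate controls it.

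Finally, to show that each limit $U$ is a viscosity solution of $\DeltapN U=0$, I would take $\phi\in C^2$ touching $U$ from below at $x$ with a strict minimum. Uniform convergence gives points $x_\varepsilon\to x$ that are $\varepsilon^3$-almost minimizers of $U_\varepsilon-\phi$. Inserting $\phi$ into the DPP at $x_\varepsilon$ and invoking the expansions \eqref{eq:avg-expand} and \eqref{eq:strat-expand}, with $(m+2)\alpha=(p-2)\beta$ as in \eqref{eq:abm}, then dividing by $\varepsilon^2$, yields the supersolution inequality when $D\phi(x)\neq0$. For that expansion I would evaluate $\phi$ at $x_\varepsilon\pm\varepsilon D\phi/|D\phi|$ rather than at the exact maximizers. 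When $D\phi(x)=0$, I would use the semicontinuous envelopes of $\Delta_\infty^N$, i.e.\ the eigenvalue bounds $\lambda_{\min}(D^2\phi)\le\Delta_\infty^N\phi\le\lambda_{\max}(D^2\phi)$. The subsolution case is symmetric. The boundary estimate gives $U=G$ on $\partial D$. Uniqueness of viscosity solutions of this Dirichlet problem (Juutinen--Lindqvist--Manfredi) makes every subsequential limit equal, so the whole family converges uniformly.
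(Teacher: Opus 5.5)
The paper gives no proof of this theorem; it is quoted from \cite{MPR12}. Your outline is essentially the argument of \cite{MPR12}: the asymptotic Arzel\`a--Ascoli lemma, a boundary estimate from pulling toward the center of an exterior ball combined with an annulus exit-time bound, an interior estimate, consistency on test functions, and uniqueness for the limit problem. For the interior estimate, your translation-and-comparison argument is a fine substitute for the game coupling used there.

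\textbf{The gap.} It is at the sentence ``Uniform continuity of $G$ then gives $|U_\varepsilon(x_0)-G(y)|\le\eta$.'' The game stops at a point $x_\tau$ of the strip, in general not on $\partial D$, and the payoff is $G_\varepsilon(x_\tau)$. The hypothesis as stated only provides, for each $\varepsilon$, some continuous extension $G_\varepsilon$. That controls neither $G_\varepsilon(x_\tau)-G(y)$ uniformly in $\varepsilon$, nor $\sup_\varepsilon\max|G_\varepsilon|$, which your Arzel\`a--Ascoli step also needs. The same defect enters your interior step: on the strip of $D\cap(D-h)$, one of $x,x+h$ lies in the strip of $D$, where $U_\varepsilon=G_\varepsilon$.

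This cannot be repaired inside the proof, because the statement is false in that generality. Take $p=2$, $D$ a ball, $G\equiv0$ and $G_\varepsilon=\dist(\cdot,D)/\varepsilon$. Then $U_\varepsilon(x)=\varepsilon^{-1}\mathbb E^x[\dist(x_\tau,D)]$. The mean overshoot of a random walk with steps uniform in $B_\varepsilon$ is bounded below by $c\varepsilon$ for $x$ away from $\partial D$, so $U_\varepsilon\not\to U\equiv0$.

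You need the setting of \cite{MPR12}, where the strip data are restrictions of one fixed continuous function on a fixed strip. At minimum you need $\sup_\varepsilon\|G_\varepsilon\|_\infty<\infty$ together with $|G_\varepsilon(x)-G(y)|\le\omega(|x-y|)$ for $x$ in the strip and $y\in\partial D$, with $\omega$ independent of $\varepsilon$. With that added, three ingredients finish the boundary step: your bound on $\mathbb E|x_\tau-z|$, the inequality $|x_\tau-y|\le|x_\tau-z|+\delta$, and Chebyshev's inequality.

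\textbf{A smaller point in the consistency step.} Evaluating $\phi$ at $x_\varepsilon\pm\varepsilon D\phi/|D\phi|$ bounds $\sup\phi$ from below but bounds $\inf\phi$ only from above. By itself it therefore gives neither one-sided inequality. You need either the two-sided expansion \eqref{eq:strat-expand} or the reflection trick. The remainder in \eqref{eq:strat-expand} is not uniform as $D\phi(x_\varepsilon)\to0$, so the reflection trick is what actually handles $D\phi(x)=0$; the eigenvalue bounds alone do not.

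For the supersolution test, let $x_\varepsilon+h_\varepsilon$ minimize $\phi$ on $\overline{B_\varepsilon(x_\varepsilon)}$ and use $\sup\phi\ge\phi(x_\varepsilon-h_\varepsilon)$. This gives
\[
  \tfrac12\bigl(\sup\phi+\inf\phi\bigr)\ge\phi(x_\varepsilon)+\tfrac12\langle D^2\phi(x_\varepsilon)h_\varepsilon,h_\varepsilon\rangle+o(\varepsilon^2).
\]
Along a limit $\bar v$ of $h_\varepsilon/\varepsilon$ you get $(p-2)\langle D^2\phi(x)\bar v,\bar v\rangle+\Delta\phi(x)\le0$ with $|\bar v|\le1$. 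If $D\phi(x)\neq0$, then $\bar v=-D\phi(x)/|D\phi(x)|$ and this is the usual inequality. If $D\phi(x)=0$, it implies $(p-2)\lambda_{\min}(D^2\phi(x))+\Delta\phi(x)\le0$. Indeed, if $\lambda_{\min}>0$ the inequality is impossible, and otherwise $\langle D^2\phi(x)\bar v,\bar v\rangle\ge\lambda_{\min}$.
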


\subsection{Convergence by lifting}
\label{subsec:conv-lifting}

\begin{theorem}[Convergence by lifting]
\label{thm:conv}
Let $\Omega\subset\R^n$ be a bounded domain satisfying the exterior sphere
condition, and let $F\in C(\partial\Omega)$.  For each sufficiently small
$\varepsilon>0$, let $F_\varepsilon\in C(\Geps)$ be a continuous extension of
$F$, and let $v_\varepsilon$ be the unique solution of the projected dynamic
programming principle with strip boundary values $F_\varepsilon$.  Then
$v_\varepsilon\to v$ locally uniformly in $\Omega$ as $\varepsilon\to 0$,
where $v$ is the unique viscosity solution of
\begin{equation}\label{eq:limit-pde}
  \begin{cases}
    \divgg\!\bigl((1+|Dv|^2)^{p/2-1}Dv\bigr)=0 & \text{in }\Omega,\\
    v=F & \text{on }\partial\Omega.
  \end{cases}
\end{equation}
\end{theorem}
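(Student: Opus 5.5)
The proof runs through the lifting to the cylinder. Fix $L>0$ and set $D_L:=\Omega\times(-L,L)$.

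\textbf{Step 1: apply the higher-dimensional theory.} Since $\Omega$ satisfies the exterior sphere condition, so does $D_L$ (at least away from the edges $\partial\Omega\times\{\pm L\}$; I would verify this there, or round off the corners). By Proposition~\ref{prop:lift-harmonious}, $w_\varepsilon(x,s)=v_\varepsilon(x)+s$ is the unique $(n+1)$-dimensional $p$-harmonious function on $D_L$ with strip data $G_\varepsilon^L$. The difficulty is that Theorem~\ref{thm:standard-conv} needs the strip data to be continuous extensions of one fixed $G\in C(\partial D_L)$. On the lateral boundary the data are $F_\varepsilon(x)+s$, which converge to $F(x)+s$. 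On the caps $\overline\Omega\times\{\pm L\}$ the data are $v_\varepsilon(x)\pm L$, which depend on the unknown $v_\varepsilon$.

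\textbf{Step 2: handle the caps (main obstacle).} I would avoid the caps rather than control them. I would use comparison for the projected DPP directly, with barriers built from lifted $p$-harmonious functions. Concretely:
\begin{itemize}
\item Theorem~\ref{thm:unique} gives $\|v_\varepsilon\|_\infty\le\|F_\varepsilon\|_\infty$.
\item The exterior-sphere barrier argument of \cite{MPR12} gives the boundary regularity estimate
\[
|v_\varepsilon(x)-F(y)|\le \omega(|x-y|)+o(1), \qquad y\in\partial\Omega.
\]
This argument transfers because the fixed point is the value of the projected game (Theorem~\ref{thm:value}). In the lifted game the score $s$ changes only by the tilt, so the standard radial barrier in $\R^{n+1}$, evaluated along the lifted process and combined with optional stopping, yields the estimate.
\item An interior asymptotic equicontinuity estimate follows from translation invariance: $v_\varepsilon(\cdot+z)$ solves the same DPP on $\Omega-z$. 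Comparison together with the boundary estimate then gives an oscillation bound for $v_\varepsilon(x+z)-v_\varepsilon(x)$.
\end{itemize}
The asymptotic Arzel\`a--Ascoli lemma of \cite{MPR12} then extracts a subsequence with $v_{\varepsilon_j}\to v$ uniformly on $\overline\Omega$ and $v=F$ on $\partial\Omega$. The cap data $v_{\varepsilon_j}(x)\pm L$ now converge uniformly to the continuous function $v(x)\pm L$, so Theorem~\ref{thm:standard-conv} applies along this subsequence, or I would simply rerun its consistency argument.

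\textbf{Step 3: identify the limit.} I would show directly that $W(x,s)=v(x)+s$ is a viscosity solution of \eqref{eq:Gn1} in $\Omega\times\R$, using the standard argument that uniform limits of solutions of the lifted DPP are viscosity solutions. Take a test function $\phi$ touching $W$ from above at $(x_0,s_0)$. Its gradient $D\phi=(\cdot,1)$ is nonzero, so the normalized $p$-Laplacian is not singular there. Taylor-expand $\phi$ at approximate maximum points of $w_\varepsilon-\phi$, using the $(n+1)$-dimensional DPP from Step~1 of Proposition~\ref{prop:lift-harmonious}. By Proposition~\ref{prop:visc-lift}, $v$ is then a viscosity solution of \eqref{eq:limit-pde} with $v=F$ on $\partial\Omega$.

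\textbf{Step 4: conclude.} Uniqueness of viscosity solutions of the uniformly elliptic Dirichlet problem \eqref{eq:limit-pde} (comparison as in \cite{CIL92}, using the gradient dependence) shows every subsequential limit equals $v$. Hence the whole family converges, locally uniformly and in fact uniformly on $\overline\Omega$.

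I expect the boundary estimate in Step~2 to be the hardest part: adapting the exterior-sphere barrier to the tilted strategic terms and the semicircular kernel. My first attempt would be to lift the barrier to $\R^{n+1}$ so that both features are absorbed by the standard ball geometry.
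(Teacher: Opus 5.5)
Your route is genuinely different from the paper's, and your Step~1 identifies the point the paper passes over. The paper feeds $w_\varepsilon$ into Theorem~\ref{thm:standard-conv} on $D_L$ as a black box. However, Lemma~\ref{lem:cylinder-bc} only justifies convergence of the strip data on the lateral part $\partial\Omega\times[-L,L]$. On the caps the data are $v_\varepsilon(x)\pm L$, and their convergence is what is to be proved. Your compactness route avoids this circularity: uniform bound, boundary and interior asymptotic equicontinuity, asymptotic Arzel\`a--Ascoli, consistency at a test function with $\partial_s\phi=1$, Proposition~\ref{prop:visc-lift}, uniqueness. It also does not need $v_\varepsilon$ to be continuous (Lemma~\ref{lem:veps-cont}). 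Steps~3 and~4 are fine as sketched.

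The gap is in the boundary estimate, and the mechanism you give for it would fail. The \cite{MPR12} argument has one player pull toward the center $z$ of the exterior ball and then controls $\mathbb{E}|x_\tau-z|$ and $\mathbb{E}\tau\lesssim d\varepsilon^{-2}$. That only controls $F(x_\tau)$. Here the payoff is $F(x_\tau)+s_\tau$, and the tilt accumulated over $O(\varepsilon^{-2})$ rounds is not small. Heuristically, suppose Player~I pulls toward $z$ with full steps, and Player~II answers with steps of length $\varepsilon\cos\theta$ away from $z$, with $\theta\sim\sqrt\varepsilon$.
\begin{itemize}
\item The net strategic drift is then only of order $\varepsilon\theta^2\sim\varepsilon^2$, so the game can still last of order $d\varepsilon^{-2}$ rounds.
\item Each of Player~II's moves lowers the score by $\varepsilon\sin\theta\sim\varepsilon^{3/2}$.
\end{itemize}
The expected loss is therefore of order $d\varepsilon^{-1/2}$, which does not vanish.

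A radial barrier $\phi(|(x,s)-\xi^*|)$ in $\R^{n+1}$ does not repair this, for two reasons.
\begin{itemize}
\item The projected dynamics agree with the standard $(n+1)$-dimensional DPP on functions of the form $\Phi(x)+s$, but not on functions nonlinear in $s$. The noise does not move $s$, Player~I is confined to the upper hemisphere of $\partial B^{n+1}_\varepsilon$, and Player~II to the lower one.
\item Increasing radial supersolutions of the normalized $p$-Laplacian in $\R^{n+1}$ satisfy $\phi'(r)\lesssim r^{-n/(p-1)}$. They grow sublinearly and cannot dominate the payoff $F(x)+s$ for large $|s|$.
\end{itemize}

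The lift that works is $s$-affine, i.e.\ an $n$-dimensional barrier for the projected DPP:
\begin{itemize}
\item Take $\Phi(x)=F(y)+\eta+K(\delta^{-\gamma}-|x-z|^{-\gamma})$ with $\gamma>\max\{n-2,0\}$ and $K=K(\eta)$ large. It is a strict classical supersolution of the regularized equation on $\{\delta/2\le|x-z|\le R\}\supset\Oeps$.
\item Apply the expansion of \S\ref{subsec:constants} to $\Phi(x)+s$, whose gradient $(D\Phi,1)$ never vanishes, so the expansion is uniform. Together with $(n+3)\talpha=(p-2)\tbeta$, it gives $T\Phi\le\Phi$ in $\Omega$ for small $\varepsilon$.
\item The proof of Theorem~\ref{thm:unique} runs verbatim for a fixed point against a DPP supersolution. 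This gives $v_\varepsilon\le\Phi+O(K\varepsilon)$, where the error comes from strip points with $\delta-\varepsilon\le|x-z|<\delta$.
\end{itemize}
In game terms, Player~II near-minimizes $\Sm[\Phi]$ rather than pulling toward $z$, and $\Phi(x_k)+s_k$ is the supermartingale. The score is thus absorbed into the barrier instead of being estimated separately.

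Finally, your modulus $\omega$ must not depend on $\varepsilon$. This requires the extensions $F_\varepsilon$ to share a modulus of continuity near $\partial\Omega$, e.g.\ being restrictions of one continuous function on a fixed outer strip. The statement as written does not stipulate this.
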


\begin{proof}
Fix $L>0$.

\medskip
\noindent\textit{Step~1.}
By Proposition~\ref{prop:lift-harmonious}, the function
$w_\varepsilon(x,s):=v_\varepsilon(x)+s$ is the unique standard
$(n+1)$-dimensional $p$-harmonious function on $D_L:=\Omega\times(-L,L)$
with continuous strip boundary values $G_\varepsilon^L(x,s)=v_\varepsilon(x)+s$.
By Lemma~\ref{lem:cylinder-bc}, $G_\varepsilon^L\to G^L:=F(x)+s$ uniformly
on $\partial D_L$.

\medskip
\noindent\textit{Step~2.}
Applying Theorem~\ref{thm:standard-conv} with $m=n+1$ and domain $D_L$ yields
$W^L\in C(\overline{D_L})$ such that $w_\varepsilon\to W^L$ uniformly on
$\overline{D_L}$, where $W^L$ is the unique viscosity solution of
\[
  \begin{cases}
    \DeltapN W^L=0 & \text{in }D_L,\\
    W^L=F(x)+s & \text{on }\partial D_L.
  \end{cases}
\]

\medskip
\noindent\textit{Step~3.}
By Corollary~\ref{cor:s-linear}, $W^L(x,s)=V^L(x)+s$ for some
$V^L\in C(\overline\Omega)$.

\medskip
\noindent\textit{Step~4.}
By Proposition~\ref{prop:visc-lift}, $V^L$ is a viscosity solution of
$\divgg\bigl((1+|DV^L|^2)^{p/2-1}DV^L\bigr)=0$ in $\Omega$.  Evaluating the
boundary condition $W^L=F(x)+s$ on $\partial D_L$ at $s=0$ gives $V^L=F$ on
$\partial\Omega$.

\medskip
\noindent\textit{Step~5.}
The regularized $p$-Laplace operator is uniformly elliptic for $p\ge 2$, so
the Dirichlet problem \eqref{eq:limit-pde} has at most one viscosity solution
\cite{CIL92}.  Since $V^L$ solves \eqref{eq:limit-pde} for every $L>0$, it
is independent of $L$; denote the common function by $v$.  Evaluating the
uniform convergence $w_\varepsilon\to W^L$ at $s=0$ gives
\[
  v_\varepsilon(x)=w_\varepsilon(x,0)\to W^L(x,0)=V^L(x)=v(x)
\]
uniformly on $\overline\Omega$ for each fixed $L$.  Since $L$ is arbitrary,
$v_\varepsilon\to v$ locally uniformly in $\Omega$.
\end{proof}

\begin{remark}
The argument is a complete reduction.  No convergence mechanism specific to
the projected model is required.  The three pillars are
Proposition~\ref{prop:lift-harmonious}, which derives the identification of
$w_\varepsilon$ with the standard $p$-harmonious function on $D_L$ from the
projected DPP alone; Corollary~\ref{cor:s-linear}, which recovers the
$s$-linearity of the limit by a translation-invariance and uniqueness argument;
and Proposition~\ref{prop:visc-lift}, which translates the higher-dimensional
convergence result back into a statement about the regularized $p$-Laplacian
in $\Omega\subset\R^n$.
\end{remark}

\bibliographystyle{amsplain}
\bibliography{ref}

\end{document}